\DeclareMathAlphabet{\itbf}{OML}{cmm}{b}{it}
\def\by{{{\itbf y}}}
\def\bx{{{\itbf x}}}
\def\bz{{{\itbf z}}}
\def\etab{{\boldsymbol{\xi}}}
\newcommand{\RR}{\mathbb{R}}
\newcommand{\K}{{\kappa}}
\newcommand{\bu}{\mathbf{u}}
\newcommand{\bv}{\mathbf{v}}
\newcommand{\ds}{\displaystyle}
\newcommand{\Itd}{\mathcal{I}_{\rm TD}}
\newcommand{\sigmae}{{\sigma_\xi}}
\newcommand{\hby}{{\widehat{\mathbf{y}}}}
\newcommand{\hbx}{{\widehat{\bx}}}
\newtheorem{thm}{Theorem}[section]
\newtheorem{cor}[thm]{Corollary}
\newtheorem{lem}[thm]{Lemma}
\newtheorem{defn}[thm]{Definition}
\numberwithin{equation}{section}
\newcommand{\pathfigures}{Figures/}
\begin{document}

\title{Detection of Electromagnetic Inclusions using Topological Sensitivity
\thanks{This research was supported by the Korea Research Fellowship Program funded by the Ministry of Science, ICT and Future Planning through the National Research Foundation of Korea (NRF-2015H1D3A1062400) and by the National Research Foundation of Korea under Grants (NRF-2016R1A2B3008104)  and (NRF-2014R1A2A1A11052491). 
}}
\author{
Abdul Wahab
\thanks{\footnotesize Bio Imaging and Signal Processing Lab., Department of Bio and Brain Engineering, Korea Advanced Institute of Science and Technology, 291 Daehak-ro, Yuseong-gu,
Daejeon 305-701, Korea (wahab@kaist.ac.kr).}\,  
\and
Tasawar Abbas
\thanks{\footnotesize Department of Mathematics and Statistics, FBAS, International Islamic University, 44000, Islamabad, Pakistan (tasawar44@hotmail.com).}
\and
Naveed Ahmed
\thanks{\footnotesize  Weierstrass Institute for Applied Analysis and Stochastics, Leibniz Institute in Forschungsverbund Berlin e. V. (WIAS), Mohrenstr. 39, 10117 Berlin, Germany (naveed.ahmed@wias-berlin.de).}
\and
Qazi Muhammad Zaigham Zia
\thanks{\footnotesize Department of Mathematics, COMSATS Institute of Information Technology, Park Road, Chak Shahzad, 44000, Islamabad, Pakistan (zaighum\_zia@comsats.edu.pk).} 
}
\maketitle

\begin{abstract}
In this article  a topological sensitivity framework for far field detection of a diametrically small electromagnetic inclusion is established. The cases of single and multiple measurements of the electric far field scattering amplitude at a fixed frequency are taken into account. The performance of the algorithm is analyzed theoretically in terms of its resolution and sensitivity for locating an inclusion. The stability of the framework with respect to measurement and medium noises is discussed. Moreover, the quantitative results for signal-to-noise ratio are presented. A few numerical results are presented to illustrate the detection capabilities of the proposed framework with single and multiple measurements. 
\end{abstract}

\noindent {\footnotesize {\bf AMS subject classifications 2000.} Primary, 35L05, 35R30, 74B05; Secondary, 47A52, 65J20}

\noindent {\footnotesize {\bf Key words.} Electromagnetic imaging; Topological derivative;  Localization; Resolution analysis; Stability analysis; Medium noise; Measurement noise.}

\section{Introduction} 

A thriving interest has been shown in topological sensitivity frameworks to procure solutions of assorted inverse problems, especially for detecting small inhomogeneities and cracks embedded in homogeneous media \cite{TDelastic, AGJK, Princeton, Multi, BG, Bellis, CGGM, DG, DGE, GWL, Guzina, HL, MPS, park, park2,  wahab, NCMIP2015, WAHR}. The impetus behind this curiosity is the appositeness and simplicity of these algorithms. Notwithstanding their utility, the use of topological sensitivity based algorithms and their stability with respect to medium and measurement noises is more often heuristic and lacks rigorous quantitative analysis of resolution and signal-to-noise ratio. 

Ammari et al  \cite{AGJK} used asymptotic analysis to substantiate that the detection of small acoustic inclusions embedded in a bounded domain cannot be guaranteed without pre-processing the measurements using a \emph{Calder\'on preconditioner} based on \emph{Neumann-Poincar\'e} operator associated with the domain. Further, the detection of small inclusions nearly touching the boundary  cannot be guaranteed. In \cite{TDelastic}, they established for linear isotropic elastic media that the mode-conversion and non-linear coupling of the wave modes at the boundary degenerate the localization and resolution of the classical imaging framework. Therefore, a weighted topological derivative based sensitivity framework was designed and debated for guaranteed inclusion detection. The stability and robustness of the acoustic and elastic algorithms were also debated in \cite{TDelastic, AGJK}. In particular, it was proved for acoustic inclusion detection using multiple measurements that the topological derivative based imaging functions are more stable than contemporary techniques such as multiple signal classification (MUSIC), back-propagation and Kirchhoff migration. However, its computational complexity is relatively higher than the listed techniques.  The analysis was further extended to the case of electromagnetic inclusion detection in \cite{wahab} wherein the tangential components of the scattered magnetic field at the surface of a bounded domain were used. The results in \cite{wahab} compliment those provided by  Masmoudi et al \cite{MPS}, wherein an adjoint field based approach is used to procure solutions of some inverse problems in non-destructive evaluation.  The far field imaging of acoustic inhomogeneities using topological sensitivity was discussed by Bellis et al  \cite{Bellis} using a factorization of the far field operator.  The case of elastic inclusion detection using topological derivatives in half and full spaces were discussed by Bonnet and Guzina \cite{BGElastic}. The topological sensitivity frameworks have been also developed to ascertain the morphology (shape, size, and material properties) of small inclusions and cracks. The interested readers are referred, for instance, to \cite{Bao, Guzina, bonnet}. Moreover, a level set theory for a variety of location indicator functions was developed in  \cite{Burger} and \cite{DL}. The partial aperture problems and the computational aspects were addressed using multi-frequency approaches, for example, in \cite{Ahn, partial}.  Finally, we precise that the topological sensitivity framework is primarily a \emph{single-short} method and the multi-static configuration is generally used to enhance its performance in terms of stability. We refer, for instance, to  \cite{Li1, Li2, Li3} for other asymptotic frameworks for detection of multiple multiscale electromagnetic inclusions using single-short algorithms. 

The goal in this investigation is to perform a quantitative analysis of a topological sensitivity functional to locate an electromagnetic inclusion of vanishing characteristic size using measurements of the far field scattering amplitude of the electric field. The general case of an electric field satisfying full Maxwell equations is considered and the arguments are constructed using asymptotic analysis by Ammari and Volkov \cite{AV}. The stability with respect to medium and measurement noises is discussed and the estimates for signal-to-noise ratio are furnished. The stability and sensitivity analysis for topological derivative based detection functions for electromagnetic inclusions using far field amplitude is not available in the literature, specially in the context of full Maxwell equations, at least not up to the knowledge of the authors.

First of all, the underlying medium containing an electromagnetic inhomogeneity is probed with an electric incident field and the far field amplitude of the scattered  electric field is recorded at the unit sphere. Then, a trial inhomogeneity is created at a search point in the background medium (without any inclusion) and is probed with the same incident field, rendering another far field amplitude due to the trial inclusion. A discrepancy functional is then constructed and its minimizer is sought. The point relative to which the far field amplitude of the trial inclusion minimizes the discrepancy is a potential candidate for the location of the  true inclusion. Since the optimization problem is difficult to handle directly, the discrepancy is expanded using asymptotic expansion of the far field amplitude  versus scale factor of the inclusion \cite{AV}. The leading order term in the expansion of the discrepancy is coined as its \emph{topological derivative}. By definition, topological derivative synthesizes the sensitivity of the discrepancy relative to the insertion of an inclusion at a search point. A point relative to which the topological derivative attains its most pronounced negative value is the strongest candidate for the true location as the discrepancy admits its most pronounced decrease at that point, given that the contrasts of true and trail inclusions have same signs. The interested readers are referred, for instance, to \cite{Bellis, BG} for further details on sign heuristic.

The inverse problem of inclusion detection is of great importance in diverse domains of science and engineering, especially in bio-medical imaging of cancerous tissues \cite{Cancer}. Moreover, these problems lend importance from the non-destructive testing of material impurities and buried objects such as in-service pipelines and anti-personal land mines \cite{DG}.

The rest of this article is organized in the following manner.  In Section \ref{form}, some notation and key results are collected and the inverse problem of anomaly detection is formulated mathematically. Section \ref{framework} deals with an $L^2-$discrepancy functional and the topological sensitivity based indicator function resulting therefrom. The sensitivity and resolution  analysis of the location indicator function is performed in Section \ref{sensitivity}. The stability with respect to measurement and medium noises is debated respectively  in Section \ref{measNoise} and  Section \ref{medNoise}.  Finally, the important results of the paper are summarized in Section \ref{conc}.

\section{Mathematical formulation}\label{form}

Let $\RR^3$ be loaded with an electromagnetic material (hereafter called the background medium) having permittivity $\epsilon_0\in\RR_+$ and permeability $\mu_0\in\RR_+$. Let the speed of light and wave-number in background medium be defined by $c_0:=1/\sqrt{\epsilon_0\mu_0}$ and $\K:=\omega/c_0$ respectively,  where $\omega\in\RR_+$ is the angular frequency.  

Let $D=\rho B_D+ \bz_D\subset\RR^3$ be a bounded inclusion with a smooth and simply connected boundary $\partial D$,  permittivity $\epsilon_1\in\RR_+$ and permeability $\mu_1\in\RR_+$, where $\bz_D\in\RR^3$ is the vector position of its center, $B_D\subset\RR^3$ is a regular enough bounded domain containing origin and representing the volume of the inclusion, and $\rho\in\RR_+$ is the scale factor. 

Let $\theta\in\mathbb{S}^2=\{\bx\in\RR^3\,|\, \bx\cdot\bx=1\}$ and $\theta^\perp\in\RR^3$ be any vector orthogonal to $\theta$.
Consider an incident time-harmonic electric field $\mathbf{E}_0$ of the form 
\begin{eqnarray}
\mathbf{E}_0(\bx):=\nabla_\bx\times(\theta^\perp e^{i\K\theta^\top\bx}) = i\K\theta\times \theta^\perp e^{i\K\theta^\top\bx}, \qquad \forall\,\bx\in\RR^3,\label{E0def}
\end{eqnarray}
satisfying the equation
\begin{eqnarray}
\nabla\times\nabla\times\mathbf{E}_0(\bx)-\K^2\mathbf{E}_0(\bx)= \mathbf{0}, \qquad \forall\,\bx\in\RR^3.\label{E0}
\end{eqnarray}
Then  the total time harmonic electric field  $\mathbf{E}_\rho$ in $\RR^3$ in the presence of inclusion $D$ is the solution to 
\begin{eqnarray}
\nabla\times\left(\epsilon_0^{-1}\chi_{\RR^3\setminus\overline{D}}+\epsilon_1^{-1}\chi_{D}\right)\nabla\times\mathbf{E}_\rho(\bx)-\omega^2
\left(\mu_0\chi_{\RR^3\setminus\overline{D}}+\mu_1\chi_{D}\right)\mathbf{E}_\rho(\bx)= \mathbf{0}, \quad\forall\, \bx\in\RR^3,\label{Erho}
\end{eqnarray}
subject to the Silver-M\"{u}ller radiation condition
\begin{eqnarray}
\ds\lim_{|\bx|\to\infty}|\bx|\Big[\nabla\times\left(\mathbf{E}_\rho-\mathbf{E}_0\right)\times\widehat{\bx}-i\K\left(\mathbf{E}_\rho-\mathbf{E}_0\right)\Big]=\mathbf{0},\label{ErhoSM}
\end{eqnarray}
where for any vector $\bv\in\RR^3\setminus\{\mathbf{0}\}$,  $\widehat{\bv}:=\bv/|\bv|$ and $\chi_D$ is the characteristic function of $D$.
The existence of a unique weak solution to \eqref{Erho}-\eqref{ErhoSM} is established, for example, in \cite{colton,nedelec}. 

The Silver-M\"{u}ller condition \eqref{ErhoSM} guarantees the existence of a vector field $\mathbf{E}^\infty_\rho\in L^2_T(\mathbb{S}^2)$ coined as \emph{far-field scattering amplitude} or \emph{far-field scattering pattern} of the electric field, which is  analytic on $\mathbb{S}^2$ and is characterized by the far-field expansion 
\begin{equation}
\mathbf{E}_\rho(\bx)-\mathbf{E}_0(\bx)= \frac{e^{i\K|\bx|}}{|\bx|}\mathbf{E}^\infty_\rho\left(\hat\bx;\theta,\theta^\perp, \K\right)+O\left(\frac{1}{|\bx|^2}\right)
\label{Einfty}
\end{equation} 
as $|\bx|\to\infty$ \cite{colton}. 
Here  $L^2_T(\mathbb{S}^2)$ is the space
of all square-integrable tangential vector fields on the unit sphere, that is,
$$
L^2_T(\mathbb{S}^2):=\left\{\hat\bv\in L^2(\mathbb{S}^2;\mathbb{C}^3)\,\,|\,\, \bv(\hbx)\cdot\hbx=0,\quad \forall\,\hbx\in\mathbb{S}^2\right\}.
$$
Since the wave-number is assumed fixed  in this investigation, the dependence of $\mathbf{E}^\infty_\rho$ on $\K$ is suppressed henceforth.  Following problems are dealt with in this article.
\subsubsection*{Inverse problems} 
\begin{enumerate}
\item Let $\theta\in\mathbb{S}^2$ and $\mathbf{E}_0(\bx)$ be the incident field in \eqref{E0} associated with $\theta$ and $\theta^\perp$. Given the measurements $\left\{\mathbf{E}^\infty_\rho(\hat\bx; \theta,\theta^\perp), \,\forall\, \hat\bx\in\mathbb{S}^2\right\}$, find the position $\bz_D$ of the inclusion $D$.

\item Let $\theta_1, \theta_2, \cdots, \theta_n\in\mathbb{S}^2$ for some $n\in\mathbb{N}$ and $\mathbf{E}_0^{j,\ell}$ be the incident fields of the form \eqref{E0} associated with $\theta_j$ and $\theta_j^{\perp,\ell}$ where $\ell\in\{1,2\}$ and $\{\theta_j,\theta_j^{\perp,1},\theta_j^{\perp,2}\}$ forms an orthonormal basis of $\RR^3$. Given the measurements $\{\mathbf{E}^\infty_\rho(\hbx; \theta_j,\theta_j^{\perp,\ell}), \,\forall\, \hat\bx\in\mathbb{S}^2, \,\ell=1,2,\quad j=1,\cdots, n \}$, find the position $\bz_D$ of the inclusion $D$.
\end{enumerate}

Remark that the  measurements are assumed to be available on the entire unit sphere $\mathbb{S}^2$. Since $\mathbf{E}^\infty_\rho$ is analytic on $\mathbb{S}^2$, the knowledge of $\mathbf{E}^\infty_\rho$ on any open subset of it furnishes the measurements on entire $\mathbb{S}^2$ thanks to analytic continuation and consequently  the limited aperture case can be dealt with.

\subsection{Preliminaries and notation}

Before proceeding any further, let us fix some notation and recall a few important results. Let the outgoing fundamental solution of the Helmholtz operator $-(\Delta+\K^2)$ in $\RR^3$ be denoted by $g$. It is well known (see, for instance, \cite{nedelec}) that  
$$
g(\bx,\by):=\frac{e^{i\K|\bx-\by|}}{4\pi|\bx-\by|}, \qquad \bx\neq\by,\quad  \bx,\by\in\RR^3.
$$
By virtue of fundamental solution $g$,  the dyadic fundamental solution $\mathbf{\Gamma}$ is constructed as
$$
\mathbf{\Gamma}(\bx,\by):=-\epsilon_0 \left[\mathbf{I}_3+\frac{1}{\K^2}\nabla_\bx\nabla_\bx^\top\right] g(\bx,\by),\qquad \bx\neq\by,\quad  \bx,\by\in\RR^3,
$$
where $\mathbf{I}_3\in\RR^{3\times 3}$ is the identity matrix. Recall that $\mathbf{\Gamma}(\bx,\by)$ is the solution to 
$$
\ds\nabla_\bx\times \nabla_\bx \times \mathbf{\Gamma}(\bx,\by)-\K^2 \mathbf{\Gamma}(\bx,\by)= -\epsilon_0\delta_\by(\bx)\mathbf{I}_3, \qquad
\bx,\by\in\RR^3 
$$
subject to the \emph{Silver-M\"uller} condition
\begin{eqnarray*}
\ds\lim_{|\bx-\by|\to \infty} |\bx-\by|\left[\nabla_\bx\times\mathbf{\Gamma}(\bx,\by)\times\frac{\bx-\by}{|\bx-\by|}-i\K\mathbf{\Gamma}(\bx,\by)\right]=0.
\end{eqnarray*}
Here $\delta_{a}(\cdot)=\delta_0(\cdot-a)$ is the Dirac mass at $a$ and the operator $\times$ acts column-wise on $\mathbf{\Gamma}$, that is, for any smooth vector field $\bu\in\RR^3\to\RR^3$ 
$$
\left[\bu\times \mathbf{\Gamma}\right]\mathbf{p}=\bu\times \left[\mathbf{\Gamma}\mathbf{p}\right], \quad \forall\,\mathbf{p}\in\RR^3.
$$
It is worthwhile precising that $\mathbf{\Gamma}$ is symmetric and possesses the following reciprocity properties in isotropic  materials (see, for instance, \cite{AILP}),
\begin{eqnarray*}
\mathbf{\Gamma}(\bx,\by)=  \mathbf{\Gamma}(\by,\bx) 
\quad\text{and}\quad
\nabla_\bx\times \mathbf{\Gamma}(\bx,\by)=\left[\nabla_\by\times \mathbf{\Gamma}(\by,\bx)\right]^\top.
\end{eqnarray*}

Let us define the electric \emph{Herglotz} wave operator $\mathcal{H}_E$ by 
\begin{eqnarray}
\mathcal{H}_E[\Phi](\bz):=  \int_{\mathbb{S}^2}\Phi(\hbx) e^{i\K\hbx^\top\bz}ds(\hbx),\qquad\forall\,\Phi\in L^2_T(\mathbb{S}^2),\, \bz\in\RR^3.\label{Herglotz}
\end{eqnarray}
It can be easily verified that for any tangential field $\Phi\in L^2_T(\mathbb{S}^2)$ the Herglotz function $\mathcal{H}_E[\Phi]$ is an entire solution to \eqref{E0}.  Refer to \cite[Chapter 6]{colton} for detailed discussion on Herglotz waves.

Let $\gamma_0\in\{\epsilon_0, \mu_0\}$, $\gamma_1\in\{\epsilon_1,\mu_1\}$ and   $v_i$ be the scalar potential defined as the solution to the transmission problem 
\begin{eqnarray*}
\begin{cases}
\Delta v_i= 0, & \RR^3 \setminus\partial{B_D}
\\
v_i^+-v_i^-=0, & \partial {B_D}
\\
\ds\frac{\gamma_0}{\gamma_1}\left(\frac{\partial v_i}{\partial\nu}\right)^+ -\left(\frac{\partial v_i}{\partial \nu}\right)^-=0, & \partial {B_D}
\\
v_i(\bx)-x_i\to 0, & |\bx|\to \infty. 
\end{cases}
\end{eqnarray*}
Then the polarization tensor $\mathbf{M}_{B_D}({\gamma_0}/{\gamma_1}):= (m_{ij})_{i,j=1}^3$,  associated with inclusion $D$ and depending on the contrast $\gamma_0/\gamma_1$, is defined by 
$$
m_{ij}\left(\frac{\gamma_0}{\gamma_1}\right):=\ds\frac{\gamma_1}{\gamma_0} \int_{{B_D}}\frac{\partial v_i}{\partial x_j}d\bx.
$$

\begin{lem}[See \cite{AKpol}]\label{PT}
The tensor $\mathbf{M}_{B_D}\left({\gamma_0}/{\gamma_1}\right)$ is real symmetric positive definite if $\left({\gamma_0}/{\gamma_1}\right)\in\RR_+$. Moreover, when $D$ is a sphere 
$$
\mathbf{M}_{B_D}\left(\frac{\gamma_0}{\gamma_1}\right)= \dfrac{3\gamma_1}{2\gamma_0+\gamma_1}|{B_D}|\mathbf{I}_3.
$$
\end{lem}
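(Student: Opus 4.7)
My plan is to verify the three assertions in sequence: symmetry, positive-definiteness, and the closed-form evaluation on a ball. The uniting tool is the transmission problem defining $v_i$ together with Green's identities and the decay $v_i(\bx) - x_i \to 0$ at infinity. Throughout I will set $\gamma(\bx) := \gamma_1 \chi_{B_D}(\bx) + \gamma_0 \chi_{\RR^3 \setminus \overline{B_D}}(\bx)$ and work with the decaying corrector $\widetilde{v}_i(\bx) := v_i(\bx) - x_i$, which vanishes at infinity and inherits the transmission jump
\begin{equation*}
\gamma_0 (\partial_\nu \widetilde{v}_i)^+ - \gamma_1 (\partial_\nu \widetilde{v}_i)^- = (\gamma_1 - \gamma_0)\,\nu_i \quad \text{on } \partial B_D.
\end{equation*}

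To establish symmetry, I would integrate $\gamma \nabla \widetilde{v}_i \cdot \nabla \widetilde{v}_j$ over $\RR^3$ by parts in each region. Since $\widetilde{v}_i = O(|\bx|^{-2})$ and $\nabla \widetilde{v}_i = O(|\bx|^{-3})$ at infinity, the spherical contribution at infinity vanishes, producing
\begin{equation*}
\int_{\RR^3} \gamma(\bx)\, \nabla \widetilde{v}_i \cdot \nabla \widetilde{v}_j \, d\bx = (\gamma_0 - \gamma_1) \int_{\partial B_D} \widetilde{v}_j \, \nu_i \, d\sigma.
\end{equation*}
The left-hand side is manifestly symmetric in $(i,j)$; rewriting the right-hand side via the divergence theorem as $(\gamma_0 - \gamma_1)\left[(\gamma_0/\gamma_1)\,m_{ji} - |B_D|\,\delta_{ij}\right]$ and equating with the version obtained by swapping $i\leftrightarrow j$ forces $m_{ij} = m_{ji}$. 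Real-valuedness is automatic since the transmission coefficients are real. For positive definiteness, I would specialize the same identity to $\widetilde{V}_\xi := \sum_i \xi_i \widetilde{v}_i$ for arbitrary $\xi \in \RR^3 \setminus \{\mathbf{0}\}$, producing an energy relation of the form $\int_{\RR^3} \gamma\,|\nabla \widetilde{V}_\xi|^2 \, d\bx = (\gamma_0 - \gamma_1)\left[(\gamma_0/\gamma_1)\, \xi^\top \mathbf{M}_{B_D} \xi - |\xi|^2 |B_D|\right]$. The subtle step is tracking signs so that strict positivity of $\xi^\top \mathbf{M}_{B_D}\xi$ follows in both regimes $\gamma_0 > \gamma_1$ and $\gamma_0 < \gamma_1$; the latter case requires the dual identity (obtained by exchanging the roles of the two media) or, equivalently, the Neumann--Poincar\'e layer-potential representation of $\mathbf{M}_{B_D}$ exploited in \cite{AKpol}. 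Non-degeneracy is enforced by noting that $\xi^\top \mathbf{M}_{B_D}\xi = 0$ would force $\widetilde{V}_\xi$ to extend harmonically across $\partial B_D$ with vanishing transmission jump, contradicting $\xi \neq \mathbf{0}$.

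For the ball $B_D = B(\mathbf{0}, r)$, spherical symmetry and the fact that $x_i$ is a degree-one solid harmonic force the ansatz $v_i^-(\bx) = A\,x_i$ inside and $v_i^+(\bx) = x_i + B\,x_i/|\bx|^3$ outside. The continuity and transmission conditions on $\{|\bx| = r\}$ reduce to the linear system $A = 1 + B/r^3$ and $(\gamma_0/\gamma_1)(1 - 2B/r^3) = A$, whose solution is $A = 3\gamma_0/(2\gamma_0 + \gamma_1)$. Substituting into $m_{ij} = (\gamma_1/\gamma_0) \int_{B_D} \partial_j v_i \, d\bx = (\gamma_1/\gamma_0)\,A\,\delta_{ij}\,|B_D|$ reproduces the asserted formula $3\gamma_1/(2\gamma_0 + \gamma_1)\,|B_D|\,\mathbf{I}_3$. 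I expect the sign analysis underlying positive definiteness to be the main obstacle; the remaining pieces are straightforward applications of Green's identity and direct computation.
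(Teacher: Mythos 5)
First, note that the paper offers no proof of this lemma at all: it is imported from \cite{AKpol} as a known result, so there is no internal argument to compare against. What you have written is a reconstruction of the standard polarization-tensor argument, and most of it checks out. The jump relation $\gamma_0(\partial_\nu\widetilde v_i)^+-\gamma_1(\partial_\nu\widetilde v_i)^-=(\gamma_1-\gamma_0)\nu_i$ is correct, the energy identity $\int_{\RR^3}\gamma\,\nabla\widetilde v_i\cdot\nabla\widetilde v_j\,d\bx=(\gamma_0-\gamma_1)\left[({\gamma_0}/{\gamma_1})\,m_{ji}-\delta_{ij}|B_D|\right]$ follows as you say (the boundary term at infinity vanishes with room to spare), and its symmetric left-hand side forces $m_{ij}=m_{ji}$ for $\gamma_0\neq\gamma_1$ (the case $\gamma_0=\gamma_1$ being trivial since then $v_i=x_i$). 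The ball computation is also correct: the system $A=1+B/r^3$, $\gamma_0(1-2B/r^3)=\gamma_1 A$ gives $A=3\gamma_0/(2\gamma_0+\gamma_1)$ and hence $m_{ij}=\frac{3\gamma_1}{2\gamma_0+\gamma_1}|B_D|\delta_{ij}$, as claimed.

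The one genuine gap is exactly where you predicted it, but your proposed primary fix would not close it. The energy identity yields $\xi^\top\mathbf{M}_{B_D}\xi\geq(\gamma_1/\gamma_0)|B_D||\xi|^2>0$ only in the regime $\gamma_0>\gamma_1$; for $\gamma_0<\gamma_1$ it produces an upper bound. The ``dual identity obtained by exchanging the roles of the two media'' does not rescue this: swapping $\gamma_0\leftrightarrow\gamma_1$ defines the \emph{different} tensor $\mathbf{M}_{B_D}(\gamma_1/\gamma_0)$, and there is no algebraic identity relating it to $\mathbf{M}_{B_D}(\gamma_0/\gamma_1)$ that transfers the lower bound (compare the two closed forms already for the ball). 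The correct completion is the complementary (Thomson/Kelvin) variational principle, which bounds the same Dirichlet energy from the other side and yields the two-sided Hashin--Shtrikman-type estimates of \cite{AKpol}; equivalently, one represents the tensor through $(\lambda I-\mathcal{K}^*_{\partial B_D})^{-1}[\nu]$ and uses the fact that the spectrum of the Neumann--Poincar\'e operator on the mean-zero part of $H^{-1/2}(\partial B_D)$ lies in $(-1/2,1/2)$ while $|\lambda|>1/2$ for every positive contrast. Your second named alternative is therefore the right tool, but as written it is only a pointer to the literature: the positive-definiteness claim is actually proved in your text for only half of the admissible parameter range. Since the lemma is itself a citation, this is a forgivable omission, but if you intend the proof to be self-contained you must either carry out the complementary variational bound or invoke the spectral bound on $\mathcal{K}^*_{\partial B_D}$ explicitly.
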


Finally, for $\mathbf{A},\mathbf{B}\in\RR^{3\times 3}$, with components $(a_{ij})$ and $(b_{ij})$ respectively,  the contraction operator (denoted by `$:$') and the \emph{Frobenius norm} (denoted by $\|\cdot\|$) are defined by 
$$
\mathbf{A}:\mathbf{B} := \ds\sum_{i,j=1}^3 a_{ij}b_{ij}
\quad\text{and}\quad
\|\mathbf{A}\|:= \sqrt{\mathbf{A}:\mathbf{A}}.
$$

\section{Location indicator function} \label{framework}

The aim in this article is to recast the aforementioned inverse problems as optimization problems for some $L^2$-discrepancy functionals and debate the performance of the topological sensitivity based location indicator functions resulting therefrom. In order to do so, choose a search point $\bz_S\in\RR^3$ and  create an inclusion $D_S=\delta B_S+\bz_S$ inside the background $\RR^3$, having permittivity $\epsilon_2\in\RR_+$ and permeability $\mu_2\in\RR_+$ with \`a priori known bounded smooth domain $B_S\subset\RR^3$.
Let $\mathbf{E}_\delta$  be the electric field in $\RR^3$ in the presence of inclusion $D_S$ subject to incident field $\mathbf{E}_0$ and satisfying equations analogous to those described in \eqref{Erho}-\eqref{ErhoSM} with $(\epsilon_{1},\mu_{1}, \chi_{D}, D)$ replaced by $(\epsilon_{2},\mu_{2},\chi_{D_S}, D_S)$.  

Let $\mathbf{E}^\infty_\delta$ be the far field scattering amplitude associated with the electric field $\mathbf{E}_\delta$.
Consider the $L^2-$cost functional 
\begin{eqnarray}
\ds\mathcal{J}[\mathbf{E}_0](\bz_S)
:= \frac{1}{2}\int_{\mathbb{S}^2} \left| \mathbf{E}^\infty_{\rho}(\hat\bx;\theta,\theta^\perp)-\mathbf{E}^\infty_{\delta}(\hat\bx;\theta, \theta^\perp)\right|^2 ds(\hbx).\label{Dis}
\end{eqnarray}
Then, by construction, the search point $\bz_S\in\RR^3$ relative to which $\mathbf{E}_\delta^\infty$ minimizes the functional $\mathcal{J}[\mathbf{E}_0]$ is a potential candidate for $\bz_D$. 
In order to address the optimization problem \eqref{Dis},  the \emph{topological derivative} of the functional $\mathcal{J}$  is defined as follows. 
\begin{defn}[Topological derivative] 
For any $\bz_S\in\RR^3$ and incident electric field $\mathbf{E}_0$  the topological derivative of $\mathcal{J}[\mathbf{E}_0]$  at point $\bz_S\in\RR^3$, denoted by ${\partial}_T\mathcal{J}[\mathbf{E}_0](\bz_S)$, is defined by
$$
{\partial}_T\mathcal{J}[\mathbf{E}_0](\bz_S):= \ds\frac{\partial\mathcal{J}[\mathbf{E_0}]}{\partial(\delta)^3}\Big|_{\delta=0}(\bz_S). 
$$
\end{defn}

The topological sensitivity based location indicator function $\Itd[\mathbf{E}_0]$ defined by 
\begin{eqnarray}
\Itd[\mathbf{E}_0](\bz_S):= -{\partial}_T\mathcal{J}[\mathbf{E}_0](\bz_S) 
\label{indicator}
\end{eqnarray}
will be considered in this article for inclusion detection. In the sequel, the shorthand notation 
\begin{align} 
\mathbf{M}_D^\gamma:= \mathbf{M}_{B_D}\left(\frac{\gamma_0}{\gamma_1}\right),
\qquad
\mathbf{M}_S^\gamma:= \mathbf{M}_{B_S}\left(\frac{\gamma_0}{\gamma_2}\right),
\qquad
a_{\ell}^\gamma:=\left(\frac{\gamma_0}{\gamma_\ell}-1\right) 
\label{notabene}
\end{align}
is used, where 
$\ell\in\{1,2\}$ and $\gamma_m=\epsilon_m$ or $\mu_m$ for $m\in\{0,1,2\}$. 

The following asymptotic expansion of the far field scattering amplitude versus scale factor $\rho$ is the key ingredient to   explicitly express  the indicator function  $\Itd[\mathbf{E}_0]$  in terms of the incident electric field $\mathbf{E}_0$, Herglotz field $\mathcal{H}_E[\mathbf{E}_0]$ and the polarization tensors associated with $D_S$. Refer to {\cite[Theorem 1.1]{AV}} for further details on the proof of the asymptotic expansion. 
\begin{thm}\label{Asymp}
Let $\theta\in\mathbb{S}^2$ and $D=\rho B_D+\bz_D$ such that $\rho\K\ll 1$.  Then for all $\bx\in\RR^3$ far from $\bz_D$ with respect to the operating wavelength,
\begin{align}
\nonumber
\ds \mathbf{E}^\infty_\rho(\hbx;\theta,\theta^\perp) 
=-\frac{i\K^3\rho^3}{4\pi}
\Big[
a^\mu_1&\left\{\mathbf{M}_D^\mu\left(\theta\times
\left(\theta\times\theta^\perp\right)\right)\right\}
\times\hbx
\nonumber
\\
&\quad
+a^\epsilon_1
\left(\mathbf{I}_3-\hbx{\hbx}^\top\right)
\mathbf{M}_D^\epsilon\left(\theta\times\theta^\perp\right)
\Big]
e^{i\K(\theta-\hbx)^\top\bz_D} +O(\rho^4).\label{AE1}
\end{align}
The term $O(\rho^4)$ is bounded by $C\rho^4$ uniformly on $\bx$, where  constant $C$ is independent of $\bz_D$.
\end{thm}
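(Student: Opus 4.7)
The plan is to derive \eqref{AE1} by combining a Lippmann--Schwinger integral representation of the scattered field with a rescaling argument near the inclusion and the far-field asymptotic behavior of the dyadic Green's function $\mathbf{\Gamma}$.

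First, I would rewrite \eqref{Erho} by isolating the contrast. Subtracting the free equation \eqref{E0} and using integration by parts against $\mathbf{\Gamma}(\bx,\cdot)$, one obtains a volume integral representation of the form
\begin{equation*}
\mathbf{E}_\rho(\bx)-\mathbf{E}_0(\bx)
= a^\epsilon_1\int_{D} \nabla_\by\times\mathbf{\Gamma}(\bx,\by)\,\nabla_\by\times\mathbf{E}_\rho(\by)\,d\by
+ \K^2 a^\mu_1\int_{D}\mathbf{\Gamma}(\bx,\by)\mathbf{E}_\rho(\by)\,d\by,
\end{equation*}
where $a^\epsilon_1, a^\mu_1$ are the contrasts defined in \eqref{notabene}. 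Next, I would take $|\bx|\to\infty$ and use the standard far-field asymptotic for the dyadic kernel, namely
\begin{equation*}
\mathbf{\Gamma}(\bx,\by)=-\frac{\epsilon_0}{4\pi}\frac{e^{i\K|\bx|}}{|\bx|}\bigl(\mathbf{I}_3-\hbx\hbx^\top\bigr)e^{-i\K\hbx^\top\by}+O(|\bx|^{-2}),
\end{equation*}
together with the analogous expression for $\nabla_\by\times\mathbf{\Gamma}$ which contributes a factor $i\K\hbx\times$. Comparison with \eqref{Einfty} then isolates $\mathbf{E}^\infty_\rho$ as a surface integral over $\partial\mathbb{S}^2$-type projection, but written as a volume integral over $D$ against $e^{-i\K\hbx^\top\by}$.

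The second stage is the $\rho$-asymptotics. I would rescale $\by=\bz_D+\rho\etab$ with $\etab\in B_D$, so that the integral over $D$ becomes an integral over $B_D$ with a factor $\rho^3$. The exponential factors combine into $e^{i\K(\theta-\hbx)^\top\bz_D}$ plus a correction of order $\rho\K$, which contributes to $O(\rho^4)$. Inside the inclusion, standard matched asymptotic analysis (as developed in \cite{AV} and the polarization tensor theory recalled via Lemma \ref{PT}) shows that $\mathbf{E}_\rho(\bz_D+\rho\etab)$ and $\nabla\times\mathbf{E}_\rho(\bz_D+\rho\etab)/(i\K)$ are, to leading order, determined by the quasi-static transmission problems that define $\mathbf{M}_D^\epsilon$ and $\mathbf{M}_D^\mu$. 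Invoking the definition of the polarization tensor to replace the volume integrals of these rescaled fields against the incident data yields
\begin{equation*}
\int_{B_D}\mathbf{E}_\rho(\bz_D+\rho\etab)\,d\etab\sim \mathbf{M}_D^\mu\mathbf{E}_0(\bz_D)/(a^\mu_1|B_D|^{-1}\cdot\text{coefficients}),
\end{equation*}
and similarly for the curl term involving $\mathbf{M}_D^\epsilon$.

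The third step is purely algebraic. Using $\mathbf{E}_0(\bz_D)=i\K(\theta\times\theta^\perp)e^{i\K\theta^\top\bz_D}$ and $\nabla\times\mathbf{E}_0(\bz_D)/(i\K)=\K(\theta\times(\theta\times\theta^\perp))e^{i\K\theta^\top\bz_D}\cdot(\text{sign})$, inserting into the two contributions and collecting factors $(-i\K^3\rho^3)/(4\pi)$ produces the magnetic-dipole term $a^\mu_1\{\mathbf{M}_D^\mu(\theta\times(\theta\times\theta^\perp))\}\times\hbx$ (from the curl--curl integral) and the electric-dipole term $a^\epsilon_1(\mathbf{I}_3-\hbx\hbx^\top)\mathbf{M}_D^\epsilon(\theta\times\theta^\perp)$ (from the $\K^2$ integral), exactly as in \eqref{AE1}.

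The main obstacle is the second step: rigorously justifying that $\mathbf{E}_\rho$ inside $D$ is, to leading order, governed by the quasi-static polarization problem, uniformly in $\bz_D$. This requires a careful inner/outer matched asymptotic expansion and bounds on the remainder that exploit the smoothness of $\partial B_D$ and the condition $\rho\K\ll 1$; the uniformity in $\bz_D$ follows because the inner problem is translation invariant and the outer dependence enters only through the smooth evaluation of $\mathbf{E}_0$ and its curl at $\bz_D$. Since this is precisely what Theorem 1.1 of \cite{AV} establishes, I would cite it rather than reproduce the delicate error estimates, and focus the write-up on steps one and three.
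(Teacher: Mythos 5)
The paper does not actually prove Theorem \ref{Asymp}: it is imported wholesale from \cite[Theorem~1.1]{AV}, with only the sentence preceding the statement pointing the reader there. So there is no in-paper argument to match your sketch against. Your outline (Lippmann--Schwinger representation, far-field asymptotics of the dyadic kernel $\mathbf{\Gamma}$, rescaling $\by=\bz_D+\rho\etab$, quasi-static inner problem and polarization tensors) is indeed the standard route by which this expansion is obtained, and your closing remark correctly identifies the only genuinely hard step --- the rigorous interior expansion with remainder bounds uniform in $\bz_D$ and $\hbx$. But since you then defer exactly that step to \cite{AV}, your proposal ends up proving nothing beyond what the citation already supplies; that is defensible (the paper does the same), provided the elementary bookkeeping you do supply is consistent. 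It is not, and that is the concrete gap.

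Your steps~1 and~3 contradict each other on which contrast produces which far-field term. In the representation you write in step~1 (which follows \eqref{Erho} literally), the curl--curl volume integral carries the coefficient $a^\epsilon_1$ and the zeroth-order integral carries $a^\mu_1$. In the far field, the source carrying a curl is the one that acquires the single factor $-i\K\hbx\times(\cdot)$ and hence produces the term of the form $\{\cdot\}\times\hbx$, while the plain source produces the transverse projection $(\mathbf{I}_3-\hbx\hbx^\top)(\cdot)$. With your step~1 this would attach $a^\epsilon_1$ to the $\{\cdot\}\times\hbx$ term and $a^\mu_1$ to the $(\mathbf{I}_3-\hbx\hbx^\top)$ term --- the opposite of \eqref{AE1} and of what you assert in step~3. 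Likewise your pairing $\int_{B_D}\mathbf{E}_\rho\sim\mathbf{M}_D^\mu\mathbf{E}_0(\bz_D)$ is backwards: in \eqref{AE1} the tensor $\mathbf{M}_D^\epsilon$ multiplies $\theta\times\theta^\perp\propto\mathbf{E}_0(\bz_D)$ and $\mathbf{M}_D^\mu$ multiplies $\theta\times(\theta\times\theta^\perp)\propto\nabla\times\mathbf{E}_0(\bz_D)$. The root cause is that \eqref{Erho} as printed places $\epsilon^{-1}$ inside the curl--curl and $\mu$ on the zeroth-order term, whereas \eqref{AE1} (and \cite{AV}) correspond to the standard electric-field equation $\nabla\times(\mu^{-1}\nabla\times\mathbf{E})-\omega^2\epsilon\,\mathbf{E}=\mathbf{0}$, in which the roles of $\epsilon$ and $\mu$ are interchanged relative to \eqref{Erho}. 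You should either start from the standard form or explicitly note the swap; as written, the derivation cannot terminate at \eqref{AE1} because the two ends of your argument use incompatible conventions.
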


By virtue of Theorem \ref{Asymp}, for all $\bx\in\RR^3$ far from $\bz_S$
\begin{align}
\nonumber
\ds \mathbf{E}^\infty_\delta(\hbx;\theta,\theta^\perp) 
=-\frac{i\K^3\delta^3}{4\pi}
\Big[a^\mu_2
&
\left\{\mathbf{M}_S^\mu\left(\theta\times
\left(\theta\times\theta^\perp\right)\right)\right\}
\times\hbx
\nonumber
\\
&
\quad +a_2^\epsilon\left(\mathbf{I}_3-\hbx{\hbx}^\top\right)
\mathbf{M}_S^\epsilon\left(\theta\times\theta^\perp\right)
\Big]
e^{i\K(\theta-\hbx)^\top\bz_S} +O(\delta^4).\label{AE2}
\end{align}
Using asymptotic expansions \eqref{AE1}-\eqref{AE2}, the explicit expression for the location indicator function $\Itd[\mathbf{E}_0]$ can be given in terms of the Herglotz field. Precisely, the following result holds.
\begin{thm}\label{thmTD}
Consider the incident electric field $\mathbf{E}_0$ satisfying \eqref{E0}, contrasts $a_2^\epsilon$ and $a_2^\mu$ defined in \eqref{notabene} and the Herglotz operator $\mathcal{H}_E$ defined in \eqref{Herglotz} and suppose that the condition $\rho\K\ll 1$ is satisfied. Then the location indicator function $\Itd[\mathbf{E}_0]$ at a search point $\bz_S\in\RR^3$ is given by
\begin{align}
\Itd[\mathbf{E}_0](\bz_S)
=
-\ds\frac{1}{4\pi}\Re e\Big\{ 
a^\mu_2\mathbf{M}_S^\mu\nabla\times\mathbf{E}_0(\bz_S)
&
\cdot\nabla\times
\overline{\mathcal{H}_E[\mathbf{E}^\infty_\rho(\cdot,\theta,\theta^\perp)](\bz_S)}
\nonumber
\\
&
+\K^2 a^\epsilon_2\overline{\mathcal{H}_E
[\mathbf{E}^\infty_\rho(\cdot;\theta,\theta^\perp)](\bz_S)}
\cdot \mathbf{M}_S^\epsilon
\mathbf{E}_0(\bz_S)
\Big\},
\label{TD}
\end{align} 
where a superposed bar reflects a complex conjugate. 
\end{thm}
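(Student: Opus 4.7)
The plan is to compute the topological derivative directly from definition by substituting the asymptotic expansion \eqref{AE2} into the discrepancy \eqref{Dis} and then rewriting the cross term as an action of the Herglotz operator.

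First, I would expand the modulus in \eqref{Dis} as
\begin{equation*}
\mathcal{J}[\mathbf{E}_0](\bz_S)=\tfrac{1}{2}\|\mathbf{E}^\infty_\rho\|_{L^2_T}^2-\Re e\int_{\mathbb{S}^2}\mathbf{E}^\infty_\rho\cdot\overline{\mathbf{E}^\infty_\delta}\,ds(\hbx)+\tfrac{1}{2}\|\mathbf{E}^\infty_\delta\|_{L^2_T}^2,
\end{equation*}
and observe that the first term is independent of $\delta$, while the third is $O(\delta^6)$ by \eqref{AE2}. Hence both vanish upon differentiation with respect to $\delta^3$ at $\delta=0$, and only the cross term contributes, giving
\begin{equation*}
\Itd[\mathbf{E}_0](\bz_S)=-\partial_T\mathcal{J}[\mathbf{E}_0](\bz_S)=\Re e\int_{\mathbb{S}^2}\overline{\mathbf{E}^\infty_\rho(\hbx;\theta,\theta^\perp)}\cdot\mathbf{F}(\hbx,\bz_S)\,ds(\hbx),
\end{equation*}
where $\mathbf{F}(\hbx,\bz_S)$ is the $\delta^3$-coefficient extracted from the asymptotic expansion \eqref{AE2} (equivalently, $\mathbf{E}_\delta^\infty=\delta^3\mathbf{F}+O(\delta^4)$).

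Next, I would simplify two elementary identities that will dismantle the bracketed expression in $\mathbf{F}$. Using $\nabla\times\nabla\times(\theta^\perp e^{i\K\theta^\top\bx})=-\Delta(\theta^\perp e^{i\K\theta^\top\bx})$ together with $\theta\cdot\theta^\perp=0$, I obtain $\nabla\times\mathbf{E}_0(\bz_S)=\K^2\theta^\perp e^{i\K\theta^\top\bz_S}$, while the vector triple product gives $\theta\times(\theta\times\theta^\perp)=-\theta^\perp$. Substituting these and the factorization $e^{i\K(\theta-\hbx)^\top\bz_S}=e^{i\K\theta^\top\bz_S}e^{-i\K\hbx^\top\bz_S}$ into $\mathbf{F}$, the integral splits into a magnetic part and an electric part that I handle separately.

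For the magnetic contribution, I would apply the scalar triple product identity $\overline{\mathbf{E}^\infty_\rho}\cdot[\mathbf{v}\times\hbx]=\mathbf{v}\cdot[\hbx\times\overline{\mathbf{E}^\infty_\rho}]$ with $\mathbf{v}=\mathbf{M}_S^\mu\theta^\perp$, and identify
\begin{equation*}
\int_{\mathbb{S}^2}\!\big[\hbx\times\overline{\mathbf{E}^\infty_\rho(\hbx)}\big]e^{-i\K\hbx^\top\bz_S}ds(\hbx)=\frac{1}{-i\K}\nabla\times\overline{\mathcal{H}_E[\mathbf{E}^\infty_\rho](\bz_S)},
\end{equation*}
which follows from $\nabla_{\bz_S}\times\big(\overline{\Phi(\hbx)}e^{-i\K\hbx^\top\bz_S}\big)=-i\K\hbx\times\overline{\Phi(\hbx)}\,e^{-i\K\hbx^\top\bz_S}$ and commuting curl with the integral. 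The prefactors then combine, via $\K^2\theta^\perp e^{i\K\theta^\top\bz_S}=\nabla\times\mathbf{E}_0(\bz_S)$, to yield the first summand in \eqref{TD}. For the electric contribution, I would exploit $\mathbf{E}^\infty_\rho\in L^2_T(\mathbb{S}^2)$, so $\overline{\mathbf{E}^\infty_\rho}\cdot\hbx=0$, which kills the $\hbx\hbx^\top$ projector and leaves a direct Herglotz integral. Substituting $(\theta\times\theta^\perp)e^{i\K\theta^\top\bz_S}=\mathbf{E}_0(\bz_S)/(i\K)$ and using symmetry of $\mathbf{M}_S^\epsilon$ produces the second summand. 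Taking real parts gives \eqref{TD}.

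The main obstacle is purely bookkeeping: faithfully tracking the phase factors $e^{\pm i\K\theta^\top\bz_S}$ and the sign produced by the triple product, so that the $(-1/4\pi)$ normalization, the factors $a_2^\mu,\K^2 a_2^\epsilon$, and the complex conjugate in the Herglotz term all land in the exact arrangement of \eqref{TD}. Everything else, including interchanging the curl with the integral over $\mathbb{S}^2$, is justified by smoothness of the kernel $e^{i\K\hbx^\top\bz_S}$ and $L^2$-integrability of $\mathbf{E}^\infty_\rho$.
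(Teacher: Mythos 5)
Your proposal is correct and follows essentially the same route as the paper: expand the $L^2$ discrepancy, discard the $\delta$-independent and $O(\delta^6)$ terms, insert the asymptotic expansion \eqref{AE2} into the cross term, and convert the resulting integrals over $\mathbb{S}^2$ into Herglotz fields via $\nabla_{\bz}\times\bigl(\overline{\Phi(\hbx)}e^{-i\K\hbx^\top\bz}\bigr)=-i\K\hbx\times\overline{\Phi(\hbx)}e^{-i\K\hbx^\top\bz}$. The one genuine (and slightly cleaner) deviation is in the electric term: you drop the projector $\mathbf{I}_3-\hbx\hbx^\top$ outright by invoking the tangentiality $\overline{\mathbf{E}^\infty_\rho(\hbx)}\cdot\hbx=0$, whereas the paper rewrites that projector as a double curl acting on the Herglotz kernel and then uses the fact that $\mathcal{H}_E[\Phi]$ is an entire solution of \eqref{E0} to replace $\nabla\times\nabla\times$ by $\K^2$; both yield \eqref{TD}, and your sign/phase bookkeeping (including $\theta\times(\theta\times\theta^\perp)=-\theta^\perp$ and $\nabla\times\mathbf{E}_0=\K^2\theta^\perp e^{i\K\theta^\top\bx}$) checks out against the paper's \eqref{CurlE}.
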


\begin{proof}
In order to derive expression \eqref{TD}  the cost functional $\mathcal{J}[\mathbf{E}_0]$ is expanded as
\begin{align*}
\mathcal{J}[\mathbf{E}_0](\bz_S) 
&= \frac{1}{2}\int_{\mathbb{S}^2}\left|\mathbf{E}^\infty_\rho(\hbx;\theta,\theta^\perp)\right|^2ds(\hbx)
-\Re e\left\{\int_{\mathbb{S}^2}\mathbf{E}^\infty_\delta(\hbx;\theta,\theta^\perp)\cdot\overline{\mathbf{E}^\infty_\rho(\hbx;\theta,\theta^\perp)}ds(\hbx)\right\}
\\
&+\frac{1}{2}\int_{\mathbb{S}^2}\left|\mathbf{E}^\infty_\delta(\hbx;\theta,\theta^\perp)\right|^2ds(\hbx).
\end{align*}
The last term on the right hand side (RHS) is of the order $O(\delta^6)$ since the leading order term in \eqref{AE2} is of the order $O(\delta^3)$. Moreover, on substituting the expression \eqref{AE2} in the second term on the RHS, one can see that 
\begin{align}
\mathcal{J}[\mathbf{E}_0]&(\bz_S) 
-\frac{1}{2}\int_{\mathbb{S}^2}\left|\mathbf{E}^\infty_\rho(\hbx;\theta,\theta^\perp)\right|^2ds(\hbx)
\nonumber
\\
=&
\frac{\delta^3}{4\pi} 
\Re e \Bigg\{i\K^3
a^\mu_2
\int_{\mathbb{S}^2}\Big\{\mathbf{M}_S^\mu
\left(\theta\times(\theta\times\theta^\perp)\right)\Bigg\}
\times\hbx\cdot\overline{\mathbf{E}^\infty_\rho(\hbx;\theta,\theta^\perp)}e^{i\K(\theta-\hbx)^\top\bz_S} ds(\hbx)
\nonumber
\\
&
+i\K^3a^\epsilon_2 
\int_{\mathbb{S}^2}\left(\mathbf{I}_3-\hat\bx{\hat\bx}^\top\right)
\mathbf{M}_S^\epsilon\left(\theta\times\theta^\perp\right)
\cdot\overline{\mathbf{E}^\infty_\rho(\hbx;\theta,\theta^\perp)}e^{i\K(\theta-\hbx)^\top\bz_S} ds(\hbx)
\Big\}
+O(\delta^4) 
\nonumber
\\
=&
\ds\frac{\delta^3}{4\pi}\Re e\Big\{a^\mu_2\Upsilon_1(\bz_S)+a^\epsilon_2\Upsilon_2(\bz_S)\Big\}+O(\delta^4),
\label{TDexp}
\end{align}
where $\Upsilon_1$ and $\Upsilon_2$ are defined by
\begin{align*}
\Upsilon_1(\bz):= &i\K^3\int_{\mathbb{S}^2}
\Big\{\mathbf{M}_S^\mu
\left(\theta\times(\theta\times\theta^\perp)\right)\Big\}
\times\hbx\cdot\overline{\mathbf{E}^\infty_\rho(\hbx;\theta,\theta^\perp)}e^{i\K(\theta-\hbx)^\top\bz} ds(\hbx),
\\
\Upsilon_2(\bz):=& i\K^3\int_{\mathbb{S}^2}
\left(\mathbf{I}_3-\hat\bx{\hat\bx}^\top\right)
\mathbf{M}_S^\epsilon\left(\theta\times\theta^\perp\right)
\cdot\overline{\mathbf{E}^\infty_\rho(\hbx;\theta,\theta^\perp)}e^{i\K(\theta-\hbx)^\top\bz} ds(\hbx).
\end{align*}

Let us analyze $\Upsilon_1$ and $\Upsilon_2$ individually. Recall that 
\begin{align}
\nabla\times\mathbf{E}_0(\bz)&=-\K^2\theta\times(\theta\times\theta^\perp)e^{i\K\theta^\top\bz},\label{CurlE}
\\
\nabla_{\bz}\times\overline{\left(\Phi(\hbx) e^{i\K\hbx^\top\bz}\right)}
&= -i\K\hbx\times\overline{\Phi(\hbx)}e^{-i\K\hbx^\top\bz}, 
\end{align}
for all $\bz\in\RR^3$, $\hbx, \theta\in\mathbb{S}^2$ and 
$\Phi\in L^2_T(\mathbb{S}^2)$. Consequently  
\begin{align}
\nonumber
\Upsilon_1(\bz)
=&
\int_{\mathbb{S}^2}\mathbf{M}_S^\mu
\nabla\times \mathbf{E}_0(\bz)
\cdot\nabla_{\bz}\times
\overline{\left(\mathbf{E}^\infty_\rho(\hbx,\theta,\theta^\perp)e^{i\K\hbx^\top\bz}\right)}ds(\hbx) 
\nonumber
\\
=&\phantom{.}\mathbf{M}_S^\mu 
\nabla\times \mathbf{E}_0(\bz)
\cdot\nabla\times
\overline{\mathcal{H}_E[\mathbf{E}^\infty_\rho(\cdot,\theta,\theta^\perp)](\bz)}. \label{T1}
\end{align}
On the other hand, for all $\bx,\bz,\bu,\bv\in\RR^3$, $\mathbf{A}\in\RR^{3\times 3}$ and $\Phi\in L^2_T(\mathbb{S}^2)$
\begin{align}
\mathbf{A}\bu\cdot\bv 
=&
\phantom{-.}\mathbf{A}^\top\bv\cdot\bu,
\label{relation1}
\\
\left(\mathbf{I}_3-\hbx\hbx^\top\right)\bu
=&
-\hbx\times
(\hbx\times\bu),
\label{relation2}
\\
\nabla_{\bz}\times \nabla_{\bz}\times \left(\overline{\Phi(\hbx)e^{i\K\hbx^\top\bz}}\right)
=&
-\K^2\hbx\times\left(\hbx\times \overline{\Phi(\hbx)}\right)e^{-i\K\hbx^\top\bz}.
\label{relation3}
\end{align}
Therefore, successive use of \eqref{relation1}-\eqref{relation3} renders
\begin{align}
\Upsilon_2(\bz)
=&\phantom{-.} i\K^3\int_{\mathbb{S}^2}
\left(\mathbf{I}_3-\hbx{\hbx}^\top\right)
\overline{\mathbf{E}^\infty_\rho(\hbx;\theta,\theta^\perp)}e^{-i\K\hbx^\top\bz}ds(\hbx)\cdot \mathbf{M}_S^\epsilon
\left(\theta\times\theta^\perp\right)e^{i\K\theta^\top\bz}
\nonumber
\\
=&
-\K^2\int_{\mathbb{S}^2}
\hbx\times\left(\hbx\times
\overline{\mathbf{E}^\infty_\rho(\hbx;\theta,\theta^\perp)}e^{-i\K\hbx^\top\bz}\right)ds(\hbx)\cdot \mathbf{M}_S^\epsilon
\mathbf{E}_0(\bz)
\nonumber
\\
=&
\phantom{-.}\int_{\mathbb{S}^2}
\nabla_{\bz}\times\nabla_{\bz}\times\left(
\overline{\mathbf{E}^\infty_\rho(\hbx;\theta,\theta^\perp)e^{i\K\hbx^\top\bz}}\right)ds(\hbx)\cdot \mathbf{M}_S^\epsilon
\mathbf{E}_0(\bz)
\nonumber
\\
=&
\phantom{-.}\nabla\times\nabla\times
\overline{\mathcal{H}_E[\mathbf{E}^\infty_\rho(\cdot;\theta,\theta^\perp)](\bz)}\cdot \mathbf{M}_S^\epsilon
\mathbf{E}_0(\bz).
\nonumber
\end{align}
Since  $\mathcal{H}_E[\Phi]$ is an entire solution to \eqref{E0} for all  $\Phi\in L^2_T(\mathbb{S}^2)$, it immediately follows that 
\begin{align}
\Upsilon_2(\bz)= 
\K^2\overline{\mathcal{H}_E[\mathbf{E}^\infty_\rho(\cdot;\theta,\theta^\perp)](\bz)}\cdot \mathbf{M}_S^\epsilon
\mathbf{E}_0(\bz).\label{T2}
\end{align}
Finally, the proof  is completed by substituting the expressions \eqref{T1} and \eqref{T2} back in \eqref{TDexp}, taking  the limit $\delta^3\to 0$ and using the definition \eqref{indicator} of $\Itd$.

\end{proof}

\section{Sensitivity and resolution analysis} \label{sensitivity}

This section is dedicated to substantiating  that the indicator function $\Itd$ is felicitous to detect the true location of the inclusion $D$. Precisely, the aim is to establish that the function $\Itd$ admits a sharp peak with focal spot size of the order of half the operating wavelength at search points $\bz_S$ 	approaching $\bz_D$. A location indicator function dealing with multiple measurements will also be constructed using $\Itd$ in this section.

\subsection{Inclusion detection using single measurement}
In order to analyze the performance of the indicator function $\Itd$, two types of inclusions are considered for brevity. It is assumed that the contrast of the inclusion $D$ from the background material is defined either by the permittivity parameter only ($\mu_0=\mu_1$, $\epsilon_0\neq \epsilon_1$) or the permeablility parameter only ($\epsilon_0=\epsilon_1$, $\mu_0\neq \mu_1$). The general case can be dealt with analogously, but the analysis would be more involved. Moreover, additional information about the contrast parameters may be required for guaranteed localization. Refer, for instance, to \cite[Section 4.1.1]{wahab} for detailed discussion. 

\subsubsection{Permittivity contrast}

Let the permeability of $D$ be indifferent from the background medium, that is, $\mu_1=\mu_0$ and the contrast of the inclusion be described by the permittivity $\epsilon_1$  which is different from $\epsilon_0$. Then a trial inclusion is also nucleated with permeability $\mu_2=\mu_0$  whereas its permittivity is assumed to be different from the background. As a result, the first term in explicit form \eqref{TD} of the location indicator function vanishes. Therefore  
\begin{align}
\Itd[\mathbf{E}_0](\bz_S)
=
-\ds\frac{\K^2 a_2^\epsilon}{4\pi}\Re e\left\{
\overline{\mathcal{H}_E
[\mathbf{E}^\infty_\rho(\cdot;\theta,\theta^\perp)](\bz_S)}
\cdot \mathbf{M}_S^\epsilon
\mathbf{E}_0(\bz_S)\right\},
\label{TDeps}
\end{align}  
where the asymptotic expansion of the Herglotz field $\mathcal{H}_E[\mathbf{E}_\rho^\infty]$ versus scale factor $\rho$ is given by  
\begin{align*}
\mathcal{H}_E[\mathbf{E}^\infty_\rho(\cdot;\theta,\theta^\perp)](\bz)
=
-\frac{i\rho^3\K^3a_1^\epsilon}{4\pi}\int_{\mathbb{S}^2} \left(\mathbf{I}_3-\hbx\hbx^\top\right)
\mathbf{M}_D^\epsilon(\theta\times\theta^\perp) e^{i\K(\theta-\hbx)^\top\bz_D}e^{i\K\hbx^\top\bz}ds(\hbx) +O(\rho^4)
\end{align*}
for all $\bz\in\RR^3$ by virtue of expansion \eqref{AE1} together with the assumption $\mu_1=\mu_0$. On  substituting the expression \eqref{E0def}   the above expansion can be written as
\begin{align}
\nonumber
\mathcal{H}_E
[\mathbf{E}^\infty_\rho(\cdot;\theta,\theta^\perp)](\bz)
=&
-\frac{\rho^3\K^2a_1^\epsilon}{4\pi}\int_{\mathbb{S}^2} \left(\mathbf{I}_3-\hbx\hbx^\top\right)e^{i\K\hbx^\top(\bz-\bz_D)}
\mathbf{M}_D^\epsilon\mathbf{E}_0(\bz_D)ds(\hbx) +O(\rho^4),
\\
\nonumber
=&
-\frac{\rho^3\K^2a_1^\epsilon}{4\pi}\left[\left(\mathbf{I}_3+\frac{1}{\K^2}\nabla_\bz\nabla_\bz^\top\right)\int_{\mathbb{S}^2} e^{i\K\hbx^\top(\bz-\bz_D)}ds(\hbx)\right]
\mathbf{M}_D^\epsilon\mathbf{E}_0(\bz_D)
\\
\nonumber
&+O(\rho^4).
\end{align}
Recall that
$$
\int_{\mathbb{S}^2}e^{i\K\hbx^\top(\bz-\bz_D)}ds(\hbx)= \frac{4\pi}{\K}{\Im m\{g(\bz,\bz_D)\}},
$$
and consequently
\begin{eqnarray}
\left(\mathbf{I}_3+\frac{1}{\K^2}\nabla_\bz\nabla_\bz^\top\right)\int_{\mathbb{S}^2} e^{i\K\hbx^\top(\bz-\bz_D)}ds(\hbx)= -\frac{4\pi}{\K\epsilon_0}\Im m\Big\{\mathbf{\Gamma}(\bz,\bz_D)\Big\}.\label{exp1} 
\end{eqnarray}
Therefore  the Herglotz field in the case of a dielectric inclusion can be expressed as 
\begin{align}
\mathcal{H}_E[\mathbf{E}^\infty_\rho(\cdot; \theta,\theta^\perp)](\bz)=\frac{\rho^3\K a_1^\epsilon}{\epsilon_0}\Im m\Big\{\mathbf{\Gamma}(\bz,\bz_D)\Big\}\mathbf{M}^\epsilon_D\mathbf{E}_0(\bz_D)+ O(\rho^4).\label{HEeps}
\end{align}
Finally the asymptotic expansion  
\begin{align}
\Itd[\mathbf{E}_0](\bz_S)
=
\ds - \frac{\rho^3\K^3a_1^\epsilon a_2^\epsilon}{4\pi\epsilon_0}\Re e\left\{
\Im m\Big\{\mathbf{\Gamma}(\bz_S,\bz_D)\Big\}\mathbf{M}_D^\epsilon\overline{\mathbf{E}_0(\bz_D)}
\cdot \mathbf{M}_S^\epsilon
\mathbf{E}_0(\bz_S)\right\} +O(\rho^4),
\label{TDeps2}
\end{align}  
of the indicator function for a dielectric inclusion is obtained by injecting back the expansion \eqref{HEeps} of the Herglotz field  in \eqref{TDeps}.

\subsubsection{Permeability contrast}

Suppose that the contrast of $D$ is defined by the permeability $\mu_1$  which is different from background permeability $\mu_0$ whereas $\epsilon_1=\epsilon_0$. As in the previous case, the trial inclusion is created with the background permittivity $\epsilon_0$, that is, $\epsilon_2=\epsilon_0$. Therefore  \eqref{TD} reduces to
\begin{align}
\Itd[\mathbf{E}_0](\bz_S)
=
-\ds\frac{a_2^\mu}{4\pi}
\Re e\left\{
\mathbf{M}_S^\mu\nabla \times \mathbf{E}_0(\bz_S)\cdot\nabla \times
\overline{\mathcal{H}_E[\mathbf{E}^\infty_\rho(\cdot,\theta,\theta^\perp)](\bz_S)}\right\}.
\label{TDmu}
\end{align}  
By virtue of Theorem \ref{Asymp} and expression \eqref{CurlE} the asymptotic expansion of $\mathcal{H}_E[\mathbf{E}^\infty_\rho]$ versus $\rho$ for a permeable inclusion turns out to be 
\begin{align}
\nonumber
\mathcal{H}_E
[\mathbf{E}^\infty_\rho&(\cdot;\theta,\theta^\perp)](\bz)
\\
\nonumber
=&
-\frac{i\rho^3\K^3 a_1^\mu}{4\pi}\int_{\mathbb{S}^2} 
\left(\mathbf{M}_D^\mu\left(\theta\times (\theta\times\theta^\perp)\right)e^{i\K\theta^\top\bz_D}\right) \times\hbx e^{i\K\hbx^\top(\bz-\bz_D)}ds(\hbx) +O(\rho^4)
\\
\nonumber
=&\phantom{- }
\frac{i\rho^3\K a_1^\mu}{4\pi}\int_{\mathbb{S}^2} 
\left(\mathbf{M}_D^\mu\nabla\times
\mathbf{E}_0(\bz_D)\right)\times\hbx  e^{i\K\hbx^\top(\bz-\bz_D)}ds(\hbx) +O(\rho^4), \qquad\bz\in\RR^3.
\end{align}
Consequently, by invoking identities \eqref{relation2} and \eqref{exp1}, $\nabla\times\mathcal{H}_E[\mathbf{E}^\infty_\rho]$ can be calculated as
\begin{align}
\nonumber
\nabla\times\mathcal{H}_E&
[\mathbf{E}^\infty_\rho(\cdot;\theta,\theta^\perp)](\bz)
\\
\nonumber
=&
-\frac{\rho^3\K^2a_1^\mu}{4\pi}\int_{\mathbb{S}^2} 
\hbx\times \left\{\left(\mathbf{M}_D^\mu\nabla\times
\mathbf{E}_0(\bz_D)\right)\times\hbx\right\}  e^{i\K\hbx^\top(\bz-\bz_D)}ds(\hbx) +O(\rho^4) 
\\
=&
-\frac{\rho^3\K^2 a_1^\mu}{4\pi}\int_{\mathbb{S}^2} 
\left(\mathbf{I}_3-\hbx\hbx^\top\right)e^{i\K\hbx^\top(\bz-\bz_D)}\mathbf{M}_D^\mu\nabla\times
\mathbf{E}_0(\bz_D)ds(\hbx) +O(\rho^4) 
\nonumber
\\
=&
-\frac{\rho^3\K^2 a_1^\mu}{4\pi}\int_{\mathbb{S}^2} 
\left(\mathbf{I}_3+\frac{1}{\K^2}\nabla_\bz\nabla_\bz^\top\right)e^{i\K\hbx^\top(\bz-\bz_D)}\mathbf{M}_D^\mu\nabla
\times
\mathbf{E}_0(\bz_D)ds(\hbx) +O(\rho^4) 
\nonumber
\\
=&
\phantom{- }\frac{\rho^3\K a_1^\mu}{\epsilon_0}\Im m\{\mathbf{\Gamma}(\bz,\bz_D)\}\mathbf{M}_D^\mu\nabla\times
\mathbf{E}_0(\bz_D) +O(\rho^4).\label{HEmu}
\end{align}

By substituting the expansion \eqref{HEmu} into \eqref{TDmu}  the asymptotic expansion of the indicator function $\Itd$ for a permeable inclusion is obtained by 
\begin{align}
\Itd[\mathbf{E}_0](\bz_S)
=
-\ds\frac{\rho^3\K a_1^\mu a_2^\mu}{4\pi\epsilon_0}  
\Re e\left\{
\Im m\big\{\mathbf{\Gamma}(\bz_S,\bz_D)\big\} 
\mathbf{M}_D^\mu\overline{\nabla\times
\mathbf{E}_0(\bz_D)}\cdot \mathbf{M}_S^\mu\nabla\times \mathbf{E}_0(\bz_S)\right\}+O(\rho^4).
\label{TDmu2}
\end{align}

\subsubsection{Decay and sign heuristics for location indicator function} \label{ss:sign}

In order to determine the ability of $\Itd$ to locate an inclusion, it is important to debate its decay and sign characteristics. It is evident  from expansions \eqref{TDeps2} and \eqref{TDmu2} and the expression for the imaginary part of the fundamental solution that 
\begin{align*}
\Itd[\mathbf{E}_0](\bz_S)
\propto &
\phantom{-.}\Im m\big\{\mathbf{\Gamma}(\bz_S,\bz_D)\big\}\overline{\mathbf{p}^\gamma_D}\cdot\mathbf{p}^\gamma_S 
\\
=&  -\frac{\epsilon_0\K}{4\pi}\left[\frac{2}{3}j_0(\K r)\mathbf{I}_3+j_2(\K r)\left(\widehat{\mathbf{r}}\,\widehat{\mathbf{r}}^\top-\frac{1}{3}\mathbf{I}_3\right)\right]\overline{\mathbf{p}^\gamma_D}\cdot\mathbf{p}^\gamma_S,
\quad \gamma\in\{\mu,\epsilon\},
\end{align*}
for a search point $\bz_S\in\RR^3$ where 
$$
\mathbf{r}:=\bz_S-\bz_D, \quad 
r:=|\mathbf{r}|, 
\quad 
\widehat{\mathbf{r}}=\frac{\mathbf{r}}{r},
\quad
\mathbf{p}_{i}^\epsilon=\mathbf{M}^{\epsilon}_{i}\mathbf{E}_0(\bz_{i}),
\quad 
\mathbf{p}_{i}^\mu=\mathbf{M}^\mu_{i}\nabla\times\mathbf{E}_0(\bz_i),
\quad
i\in\{D,S\}.
$$ 
The function $j_n$ represents an order $n$ spherical Bessel function of first kind. Recall that $j_n(kr)$ behaves like $O(1/kr)$ when $kr\to\infty$, and like $O((kr)^n)$ when $kr\to 0$ (see, for instance, \cite[10.52.3]{NIST}). Therefore, the indicator function $\bz_S\mapsto \Itd[\mathbf{E}_0](\bz_S)$ rapidly decays for $\bz_S$ away from $\bz_D$ and has a sharp peak with a focal spot size of the order of half the operating wavelength when $\bz_S\to\bz_D$.  

Heuristically, if the contrasts $a_2^\mu$ and $a_2^\epsilon$ have the same signs as $a_1^\mu$ and $a_2^\epsilon$ respectively, then $\mathcal{J}[\mathbf{E}_0](\bz_S)$ will observe the most pronounced decrease at the potential candidate $\bz_S$ of the true location $\bz_D$. In other words, the indicator function $\bz_S\mapsto\Itd[\mathbf{E}_0](\bz_S)$  should attain its most pronounced positive value (being defined as the negative of the topological derivative of $\mathcal{J}[\mathbf{E}_0](\bz_S)$) when $\bz_S\to\bz_D$. Refer, for instance, to \cite{Bellis, BG} for detailed discussions on sign heuristic. 

Remark that if the contrasts of true and trial inclusions have same signs then $\rho^3 a_1^\mu a_2^\mu\geq 0$ and $\rho^3 a_1^\epsilon a_2^\epsilon\geq 0$ regardless of whether $D$ has only permeability contrast or only permittivity contrast.  Thus  $\bz_S\mapsto\Itd[\mathbf{E}_0](\bz_S)$ indeed achieves its maximum positive value when $\bz_S\to\bz_D$ due to its decay properties. Therefore, it synthesizes the sensitivity of cost functional $\mathcal{J}[\mathbf{E}_0](\bz_S)$ relative to the insertion of an inclusion at search point $\bz_S\in\RR^3$.
In a nutshell, $\Itd[\mathbf{E}_0]$ is felicitous to detect $\bz_D$ with Rayleigh's resolution.

In order to back our theoretical findings, we perform a few numerical experiments for the detection of spherical inclusions with only permittivity contrast and permeability contrast. The inclusions are assumed to be centered at the origin with $\rho=0.01$. The incident fields are excited at wave-numbers $\K=4\pi$ and $\K=8\pi$ with incident direction $\theta=(1,0,0)^\top$ and corresponding perpendicular $\theta^\perp=(0,1,0)^\top$.   Figures \ref{fig:1} and \ref{fig:2} illustrate the maps of  $\bz_S\mapsto\mathcal{I}_{TD}[\mathbf{E}_0](\bz_S)$ over $[-1,1]\times[-1,1]\times\{0\}$ for an inclusion having permittivity contrast with and without deterministic measurement noise respectively.  In Figures \ref{fig:3} and \ref{fig:4} we present similar results for permeability contrast case. In both cases we use $201\times 201$ sampling points. A sharp peak can be observed in all cases indicating the locations of the inclusions. Moreover, the focal spot size of the location indicator function  clearly decreases with an increase in the excitation frequency in both permittivity and permeability contrast cases as per Rayleigh resolution criterion. In order to see the effects of noise on the detection capabilities of the location indicator function we choose $10\%$ and $20\%$ deterministic noise corrupting the measurements when the incident waves are excited at wave-number  $\K=4\pi$. The effect of a more general Gaussian random noise will be quantitatively discussed in detail in Section \ref{measNoise}.
\begin{figure}[!ht]
\includegraphics[width=0.43\textwidth]{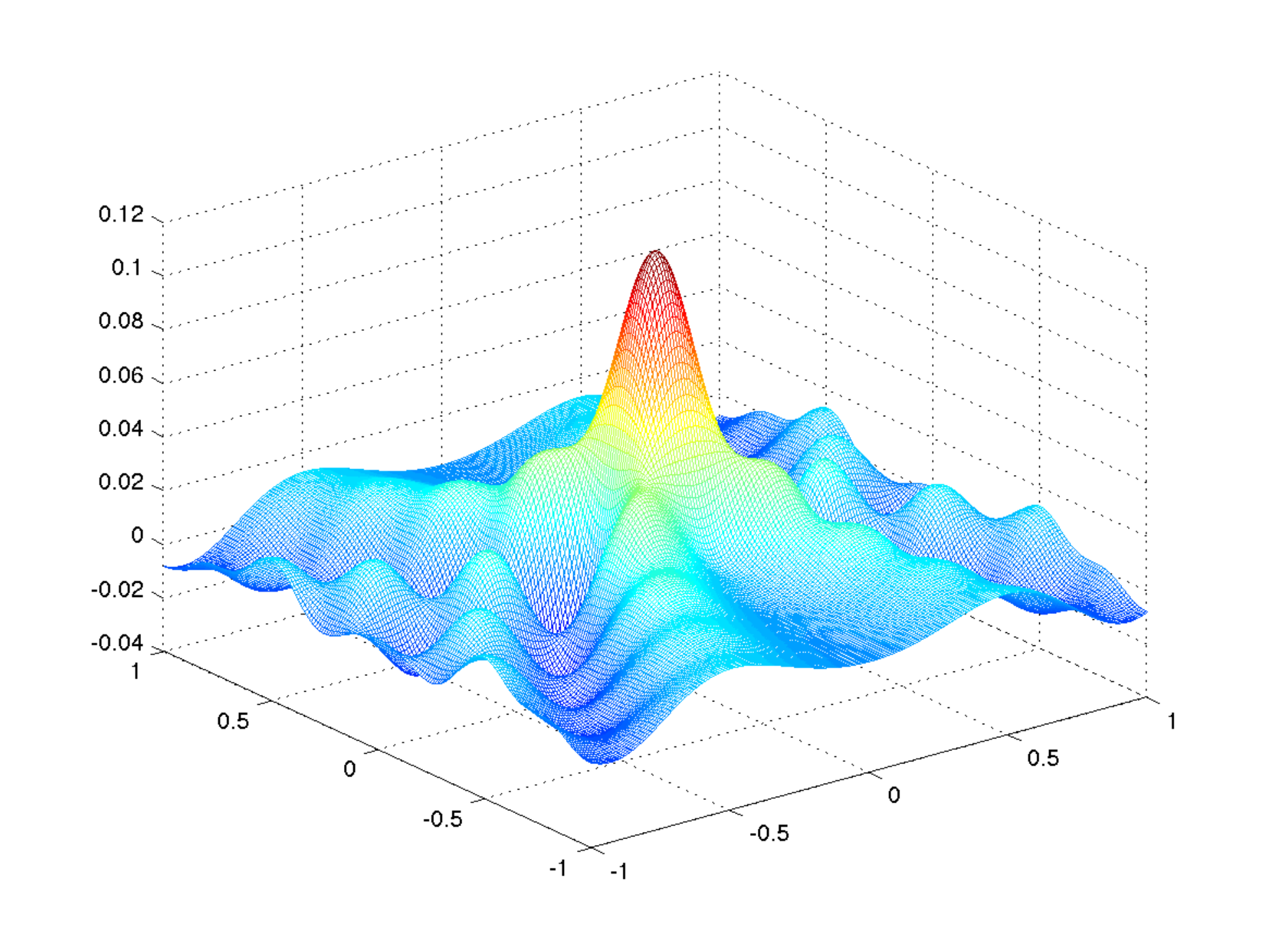}
\includegraphics[width=0.43\textwidth]{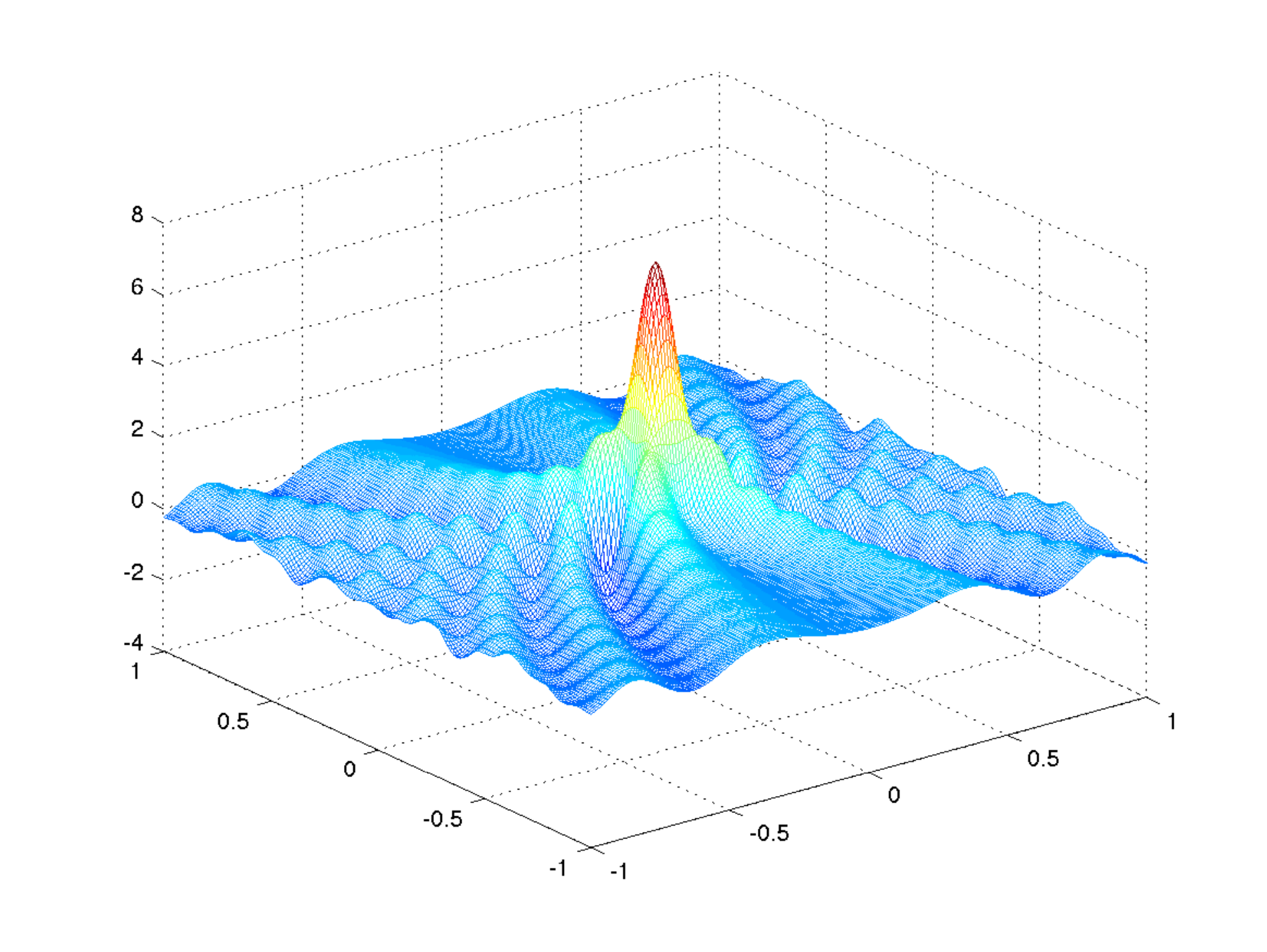}
\includegraphics[width=0.48\textwidth]{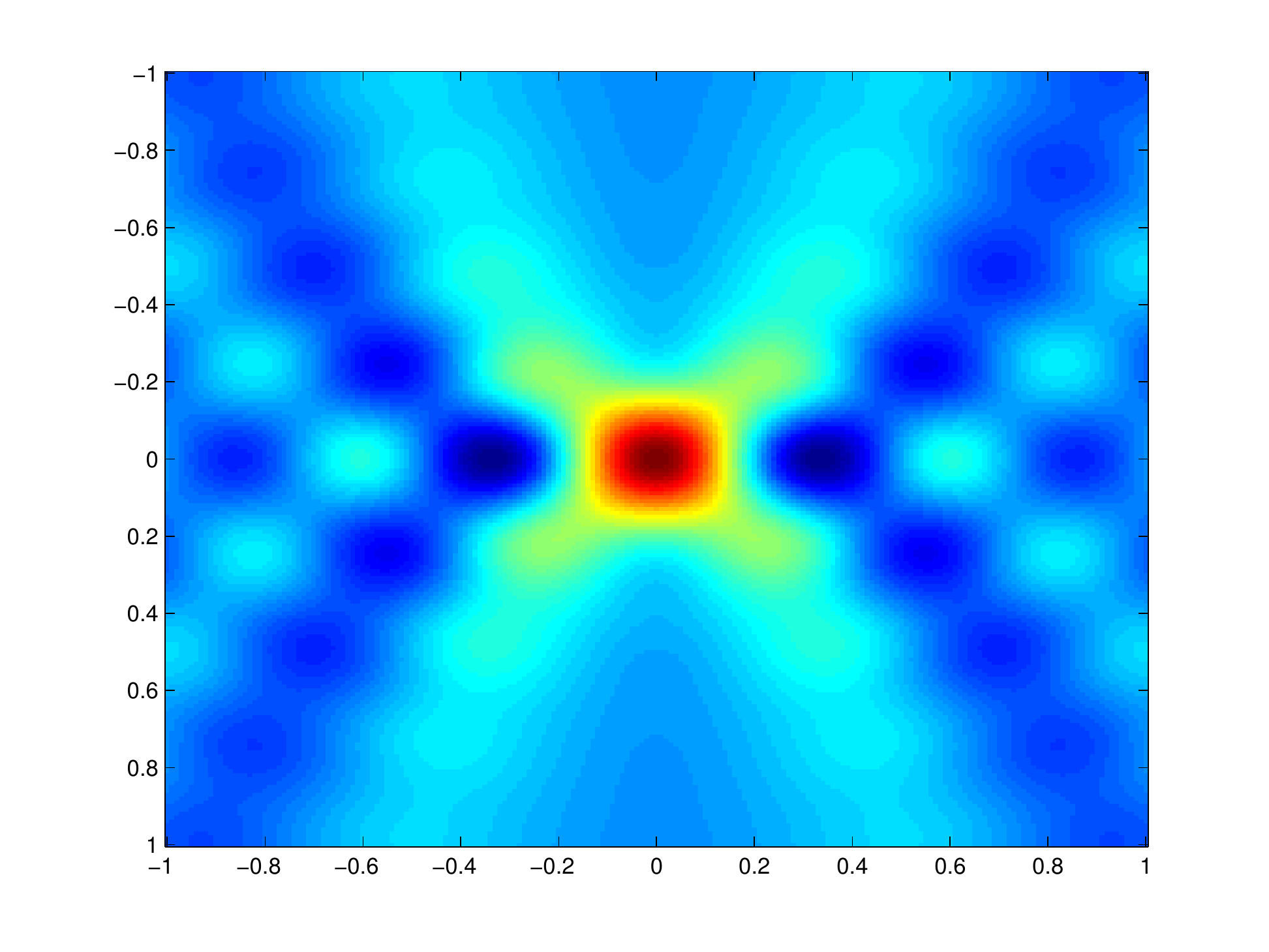}
\includegraphics[width=0.48\textwidth]{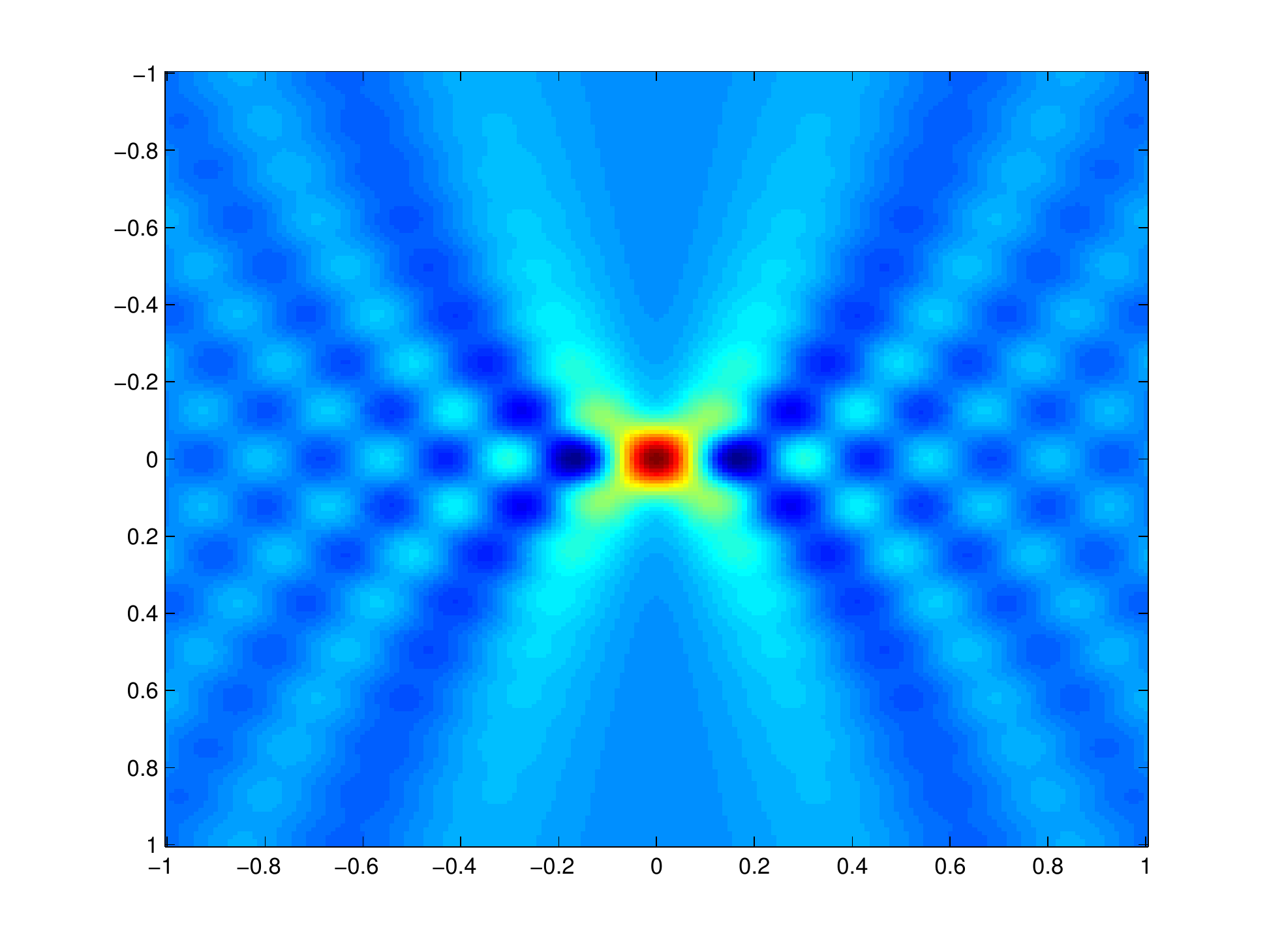}
\caption{Detection of a spherical inclusion with permittivity contrast ($\epsilon_0=1$, $\epsilon_1=2$, $\mu_0=1$ and $\mu_1=1$). Left: $\K=4\pi$. Right: $\K=8\pi$. Top: $\bz_S\mapsto\Itd[\mathbf{E}_0](\bz_S)$. Bottom: Focal spot.}\label{fig:1}
\end{figure}
\begin{figure}[!ht]
\includegraphics[width=0.43\textwidth]{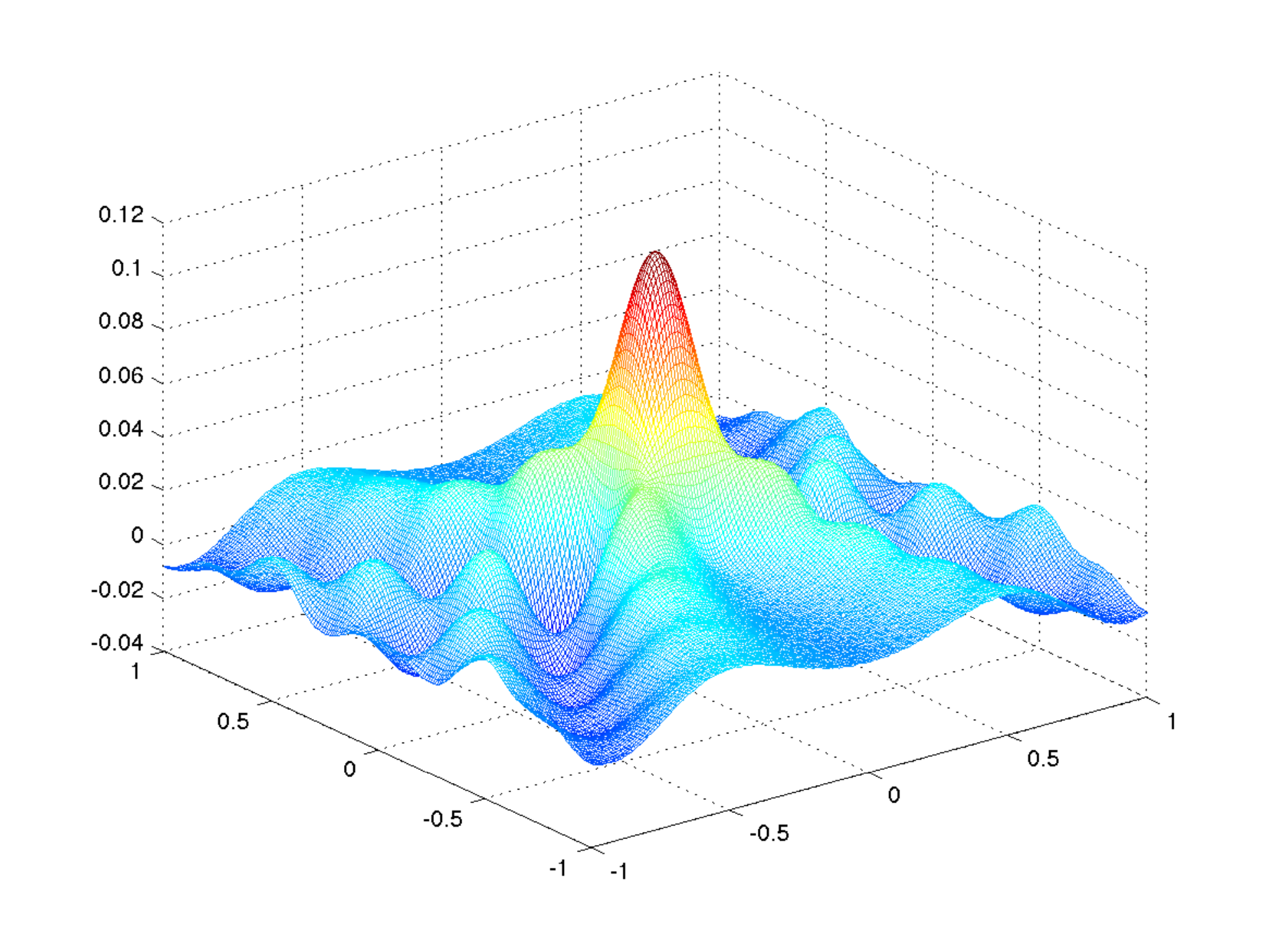}
\includegraphics[width=0.43\textwidth]{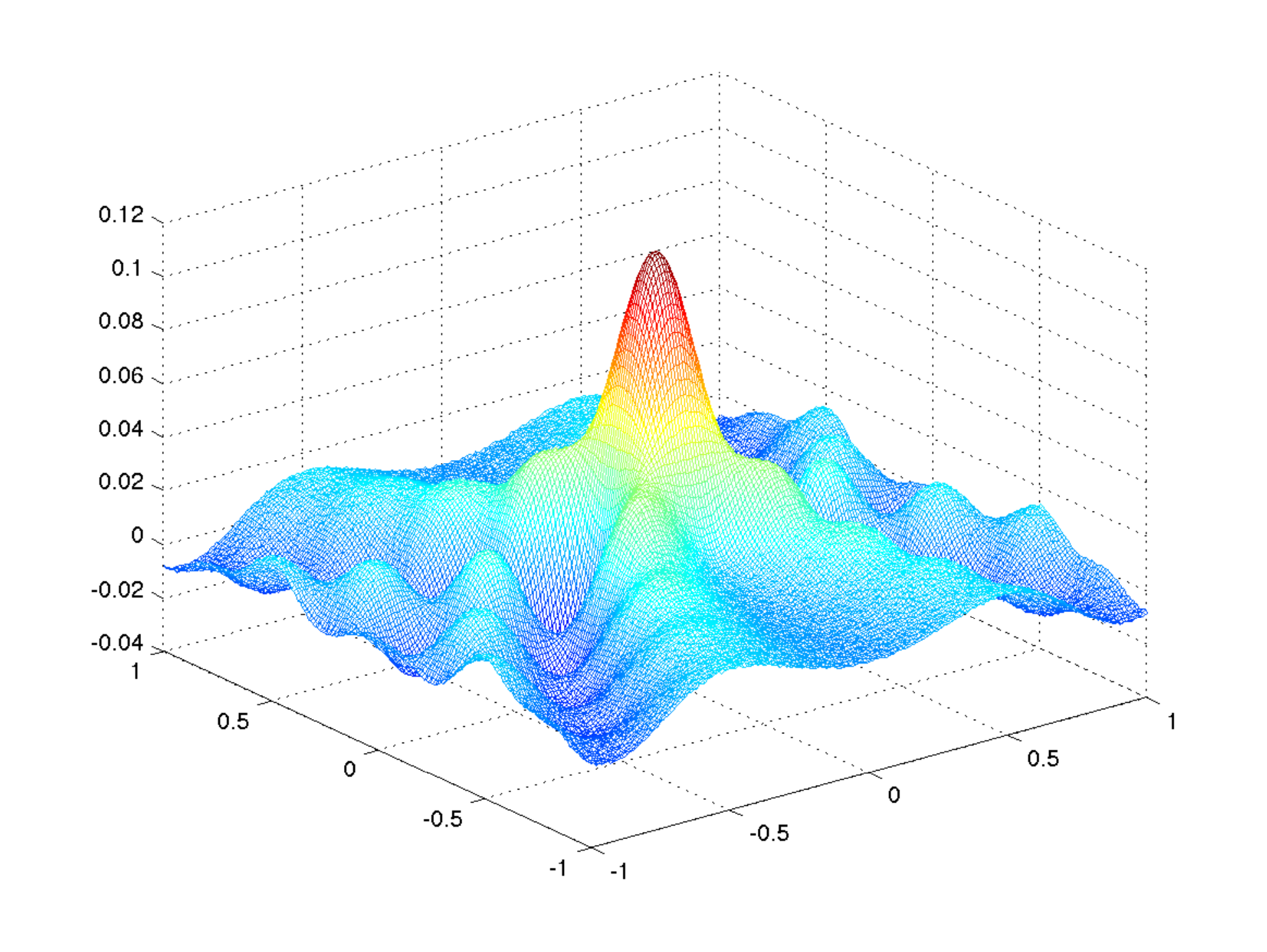}
\includegraphics[width=0.48\textwidth]{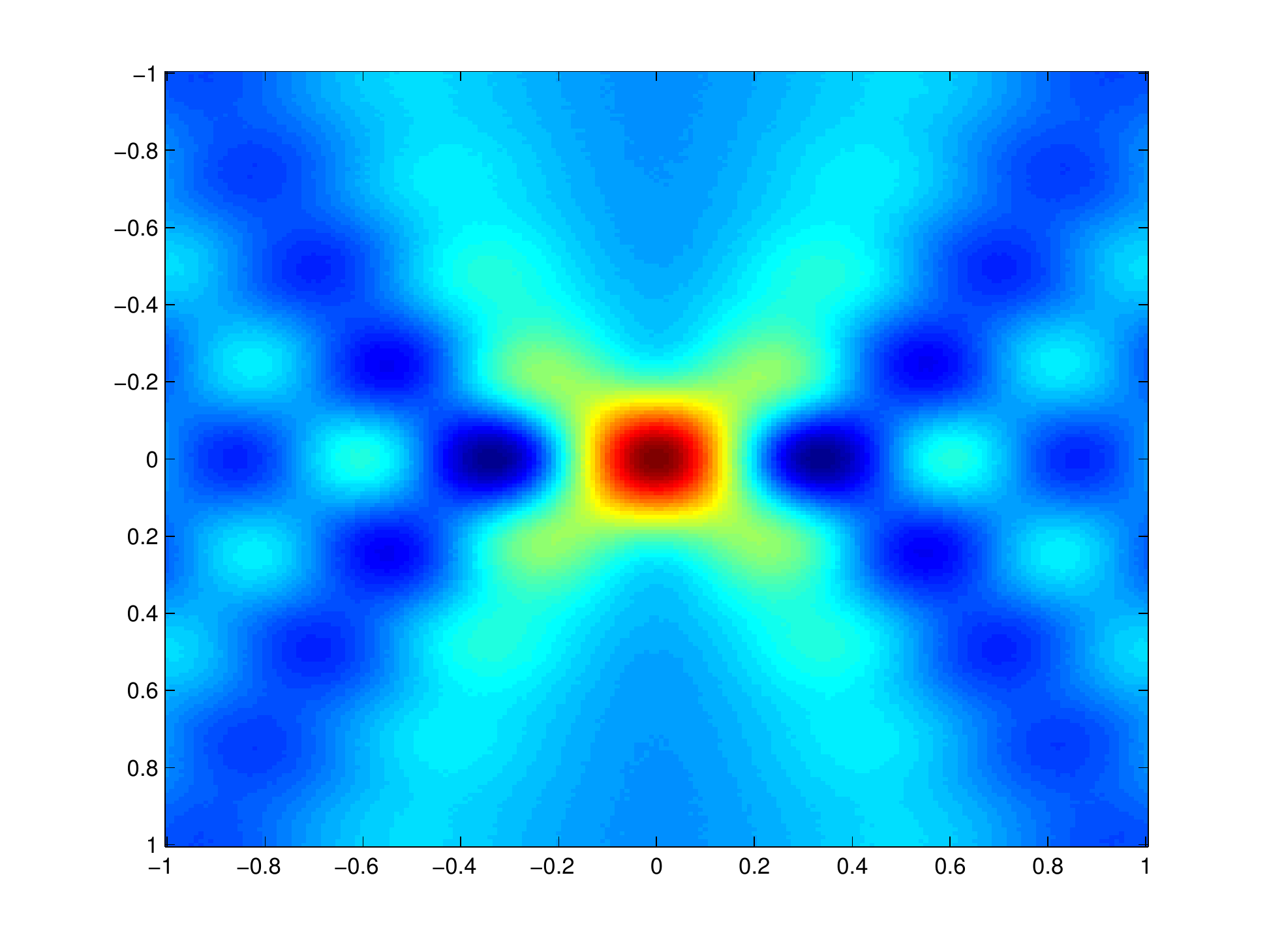}
\includegraphics[width=0.48\textwidth]{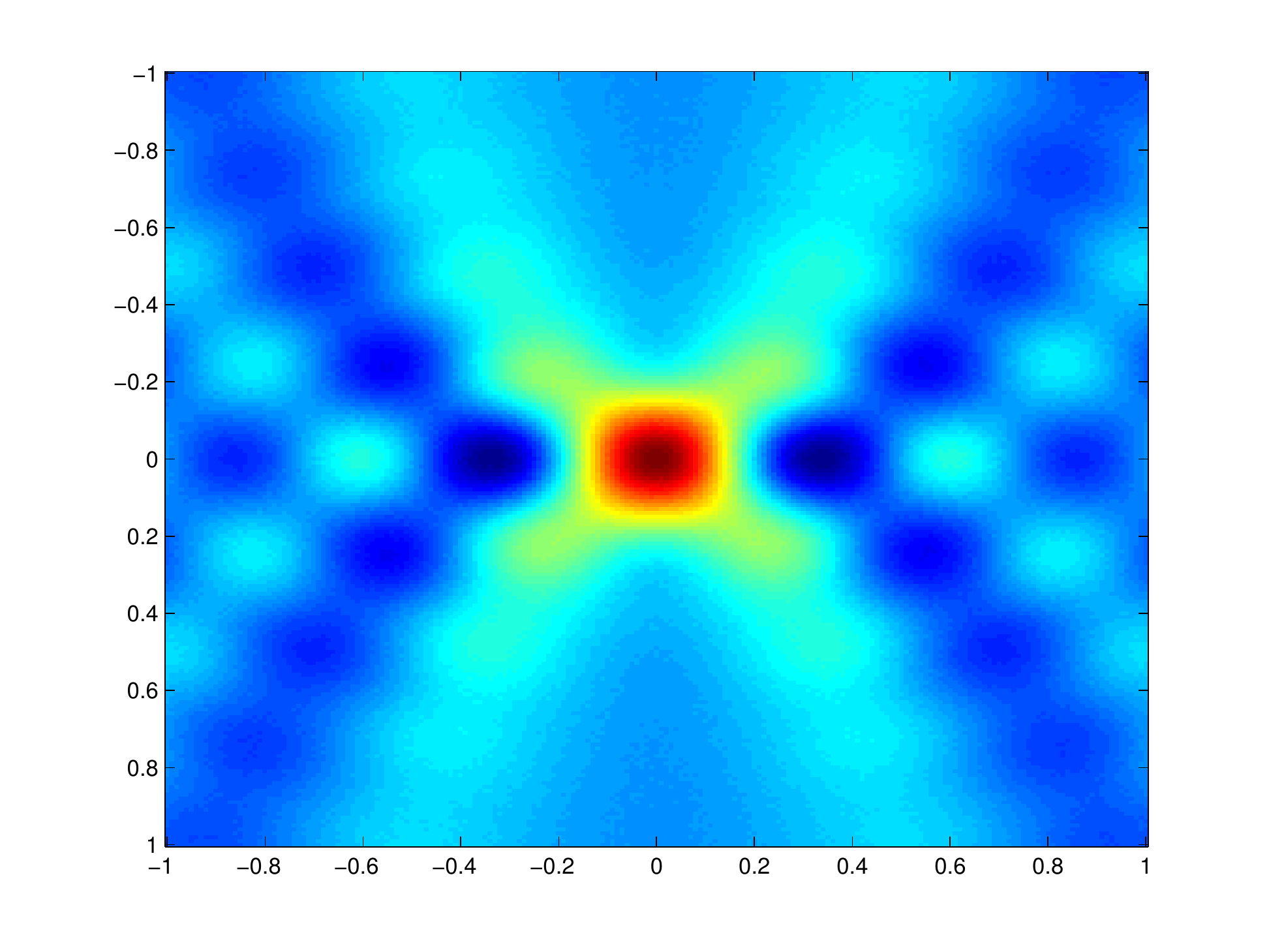}
\caption{Detection of a spherical inclusion with permittivity contrast ($\epsilon_0=1$, $\epsilon_1=2$, $\mu_0=1$ and $\mu_1=1$) at $\K=4\pi$. Left: $10\%$ noise. Right: $20\%$ noise. Top: $\bz_S\mapsto\Itd[\mathbf{E}_0](\bz_S)$. Bottom: Focal spot.}\label{fig:2}
\end{figure}
\begin{figure}[!ht]
\includegraphics[width=0.43\textwidth]{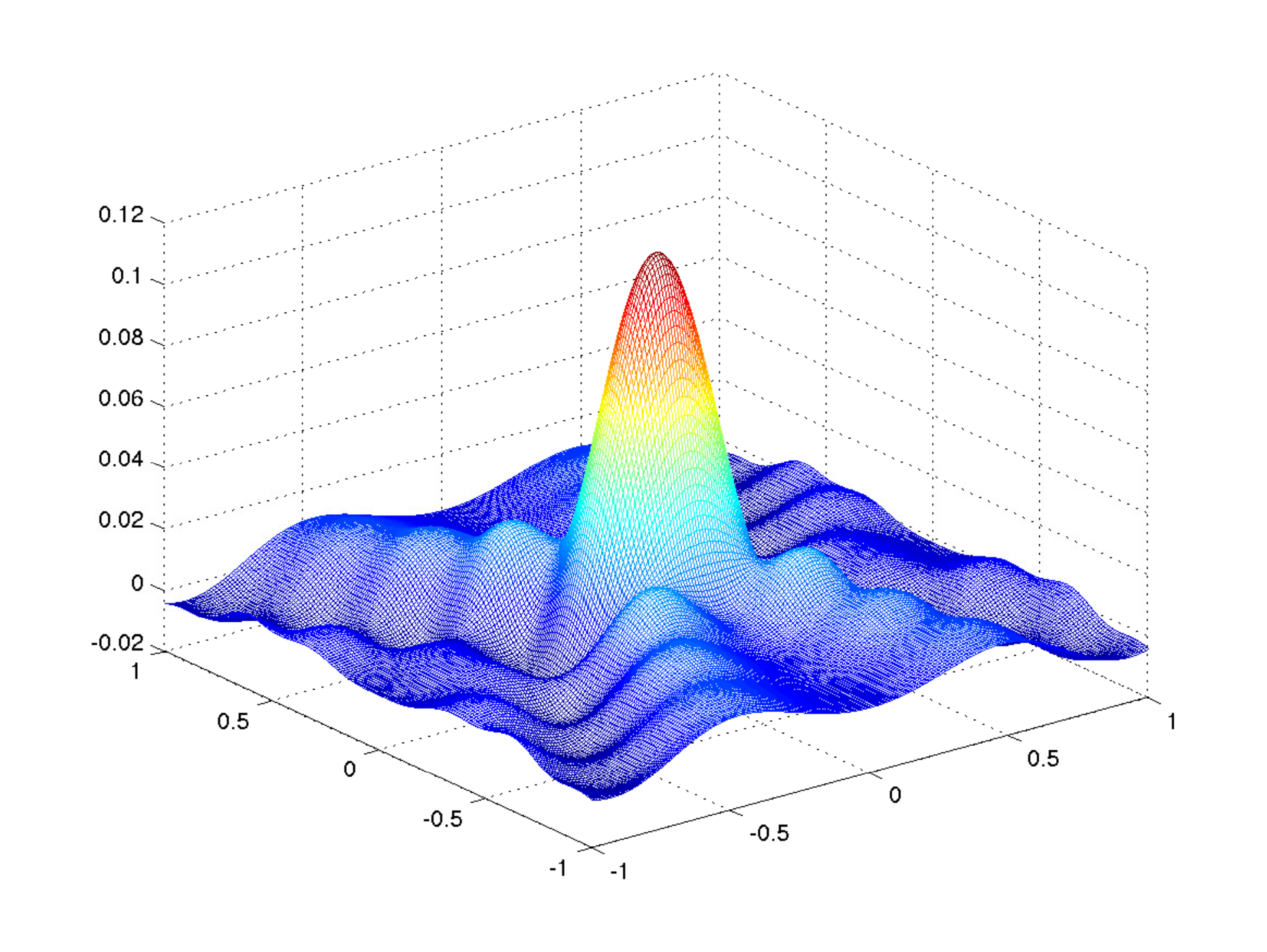}
\includegraphics[width=0.43\textwidth]{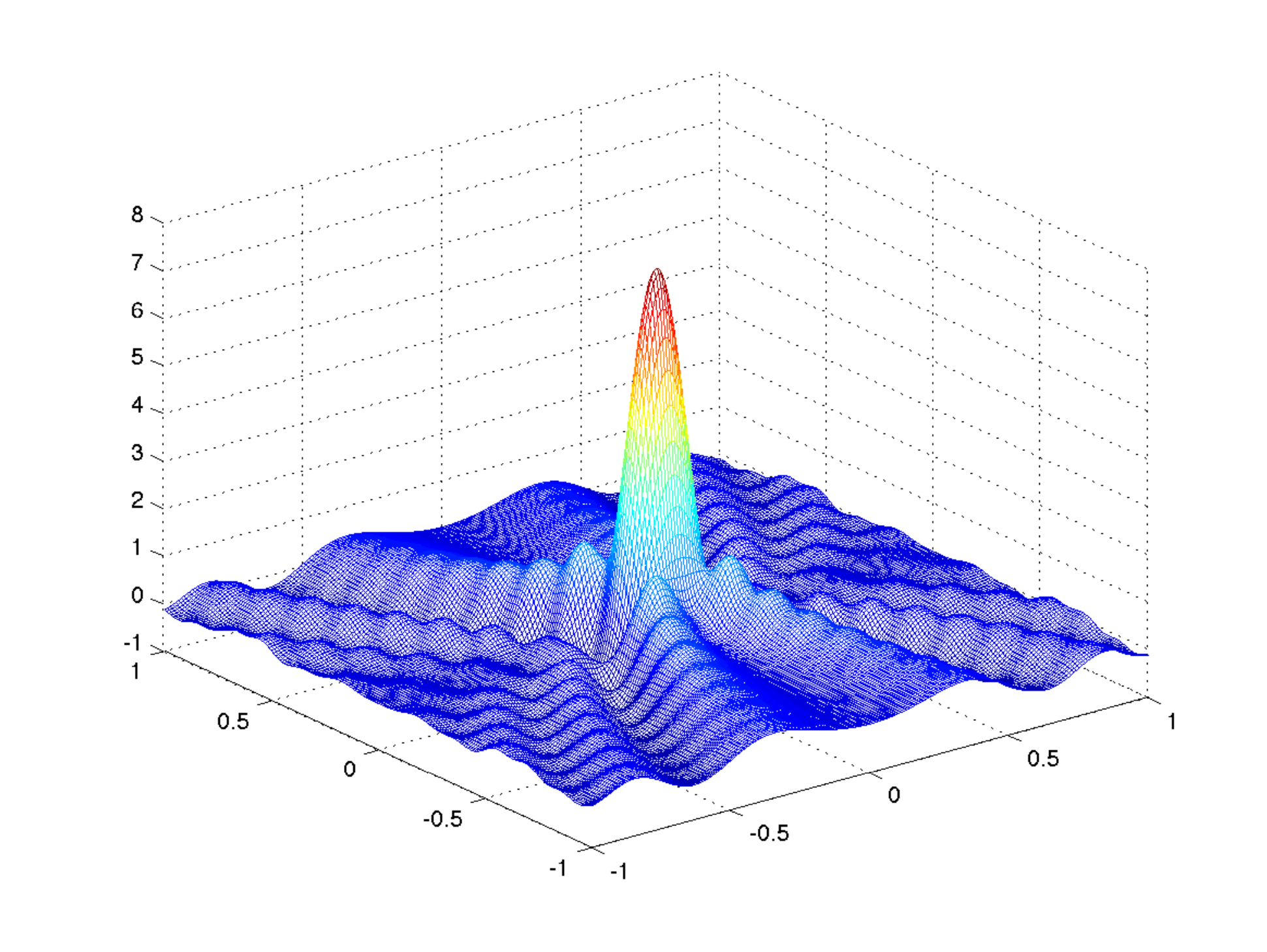}
\includegraphics[width=0.48\textwidth]{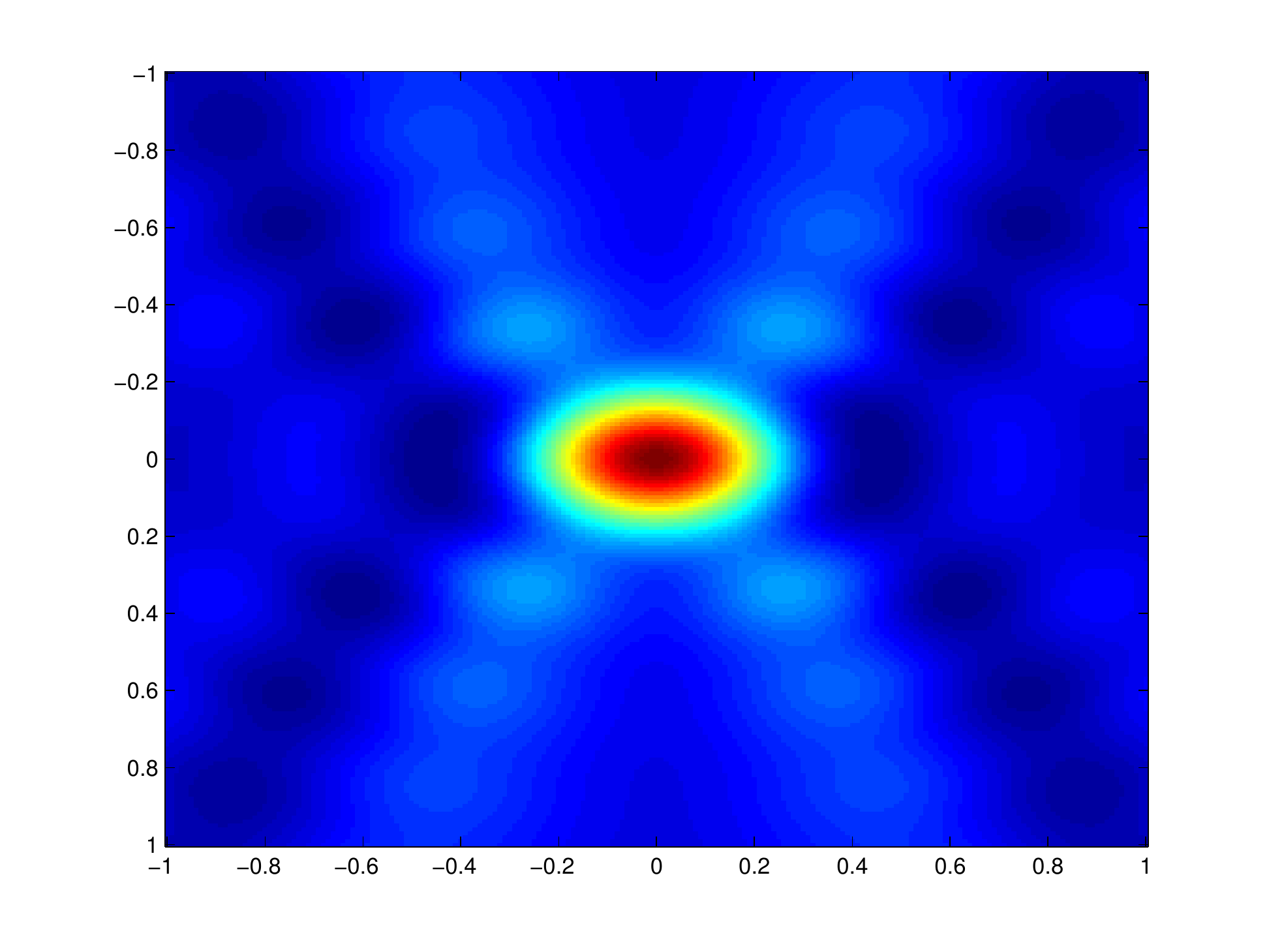}
\includegraphics[width=0.48\textwidth]{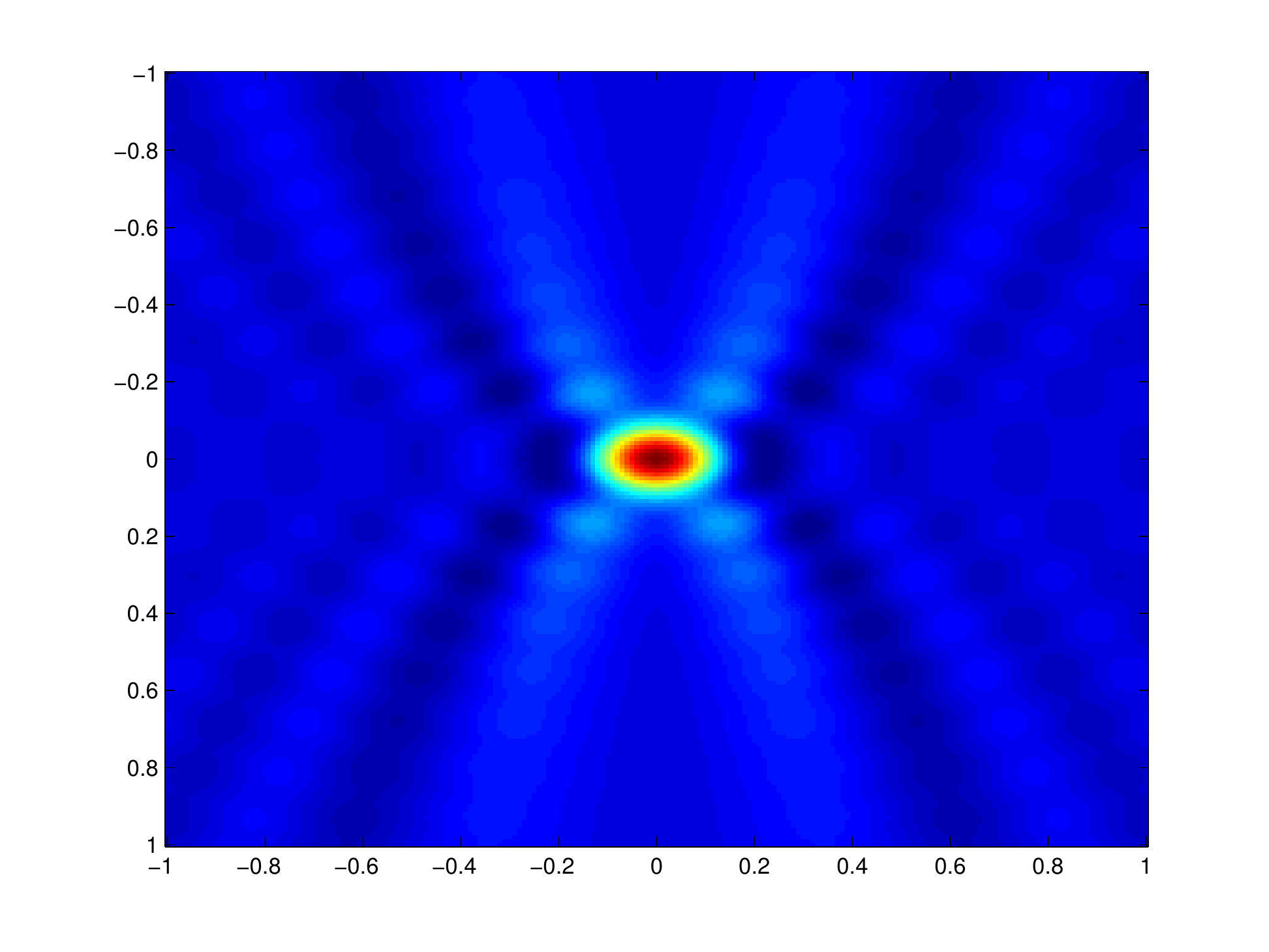}
\caption{Detection of spherical inclusion with permeability contrast ($\epsilon_0=1=\epsilon_1$, $\mu_0=1$ and $\mu_1=2$). Left: $\K=4\pi$. Right: $\K=8\pi$. Top: $\bz_S\mapsto\Itd[\mathbf{E}]_0(\bz_S)$. Bottom: Focal spot.}\label{fig:3}
\end{figure} 
\begin{figure}[!ht]
\includegraphics[width=0.43\textwidth]{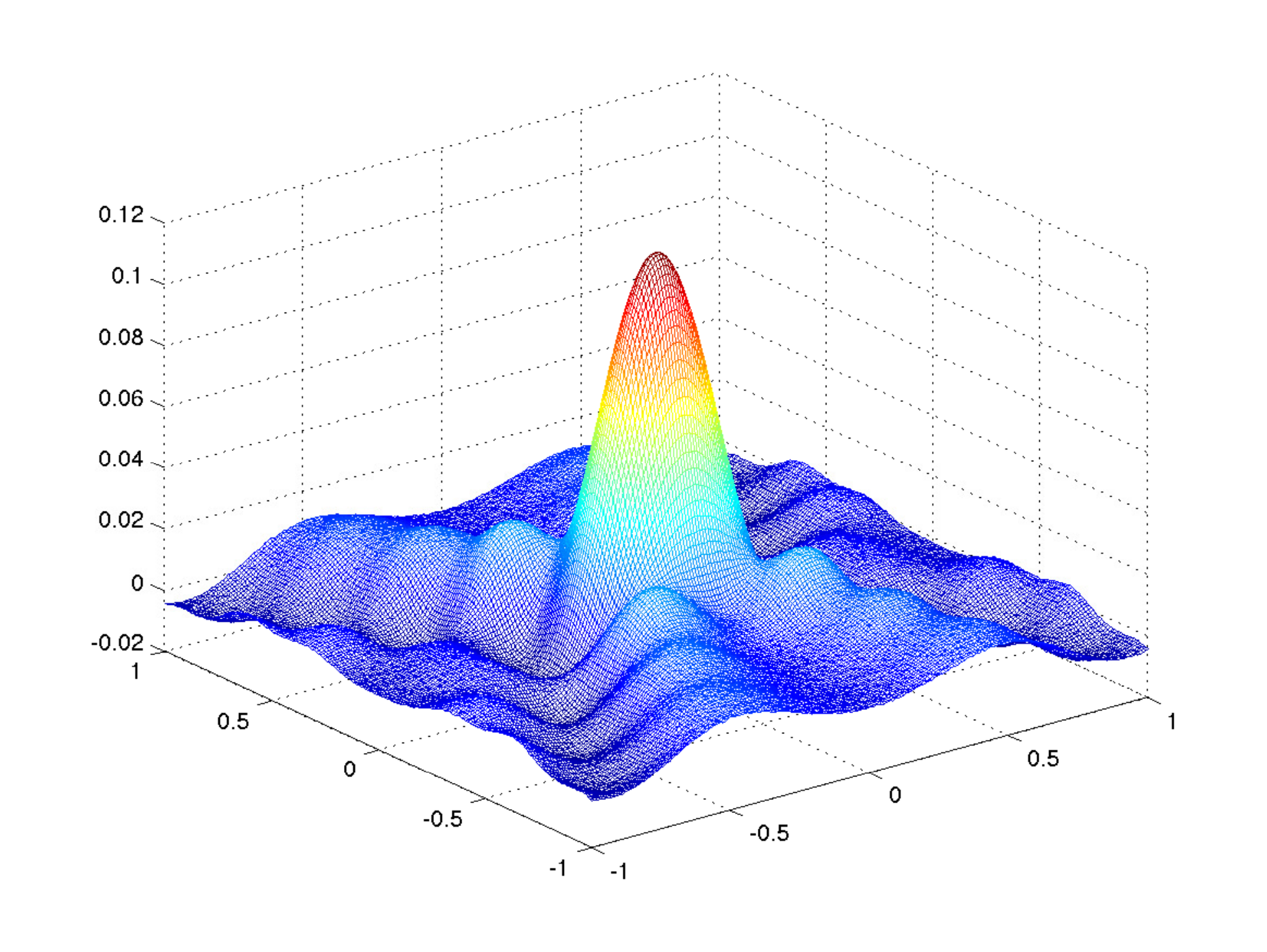}
\includegraphics[width=0.43\textwidth]{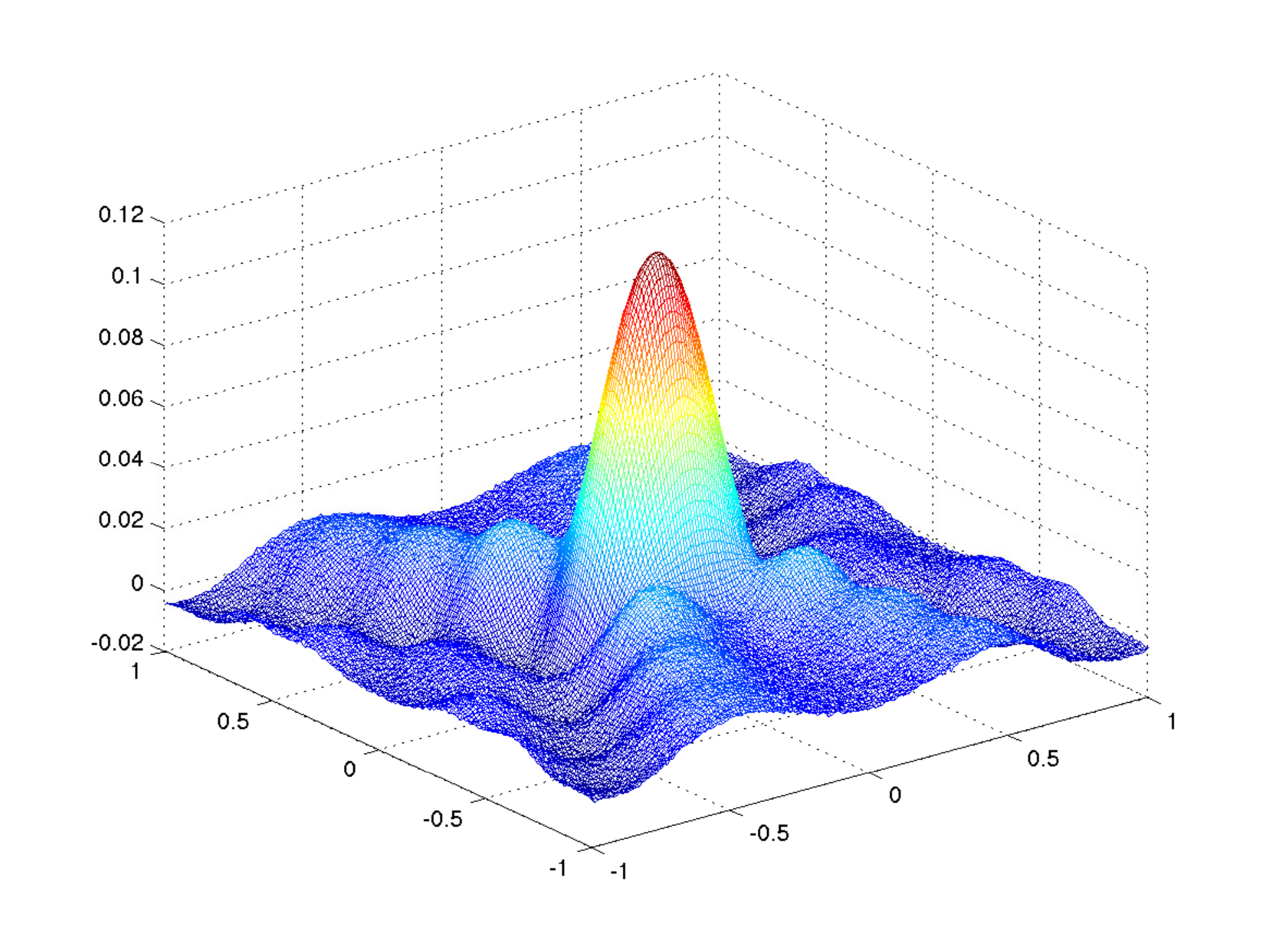}
\includegraphics[width=0.48\textwidth]{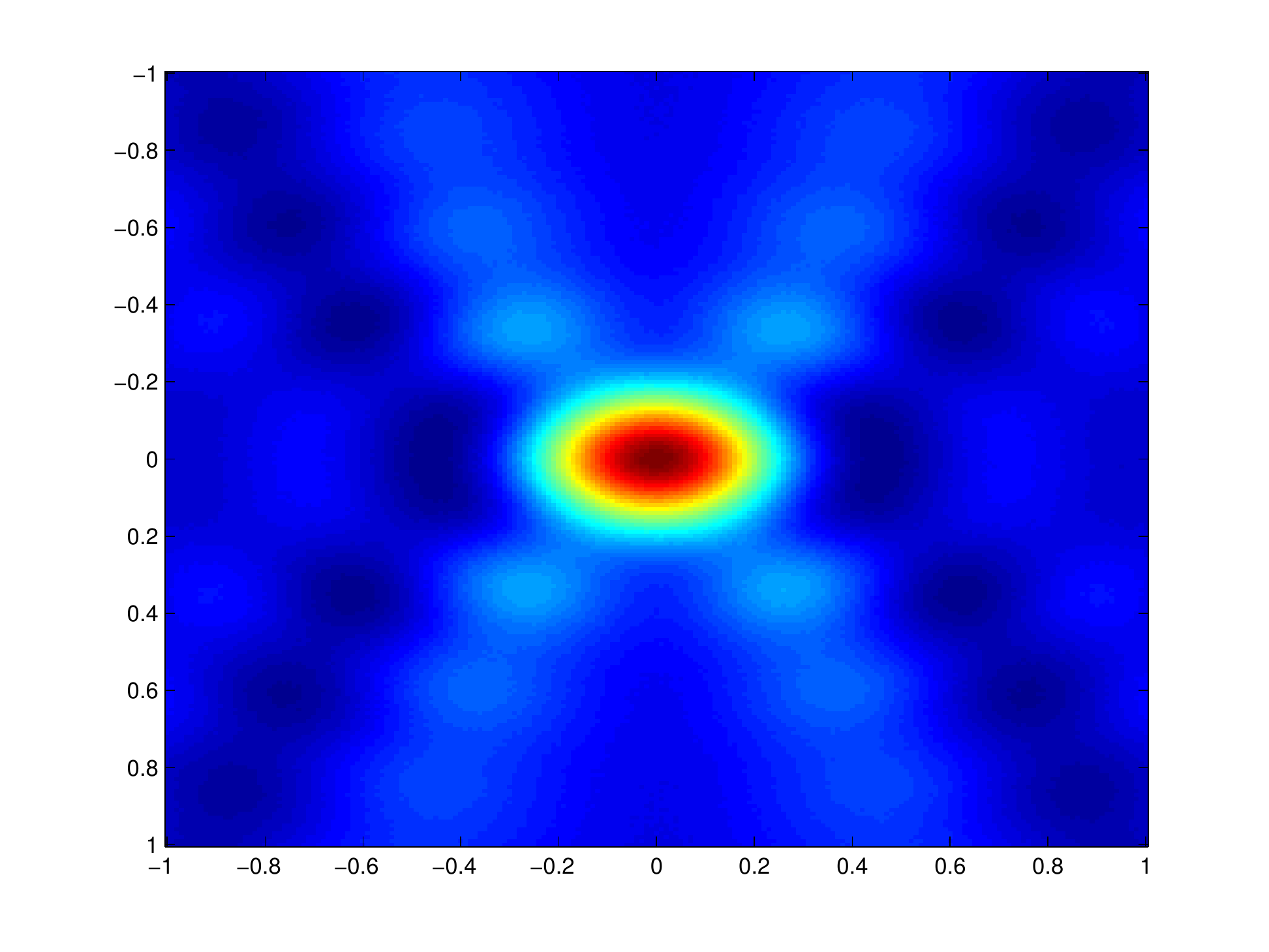}
\includegraphics[width=0.48\textwidth]{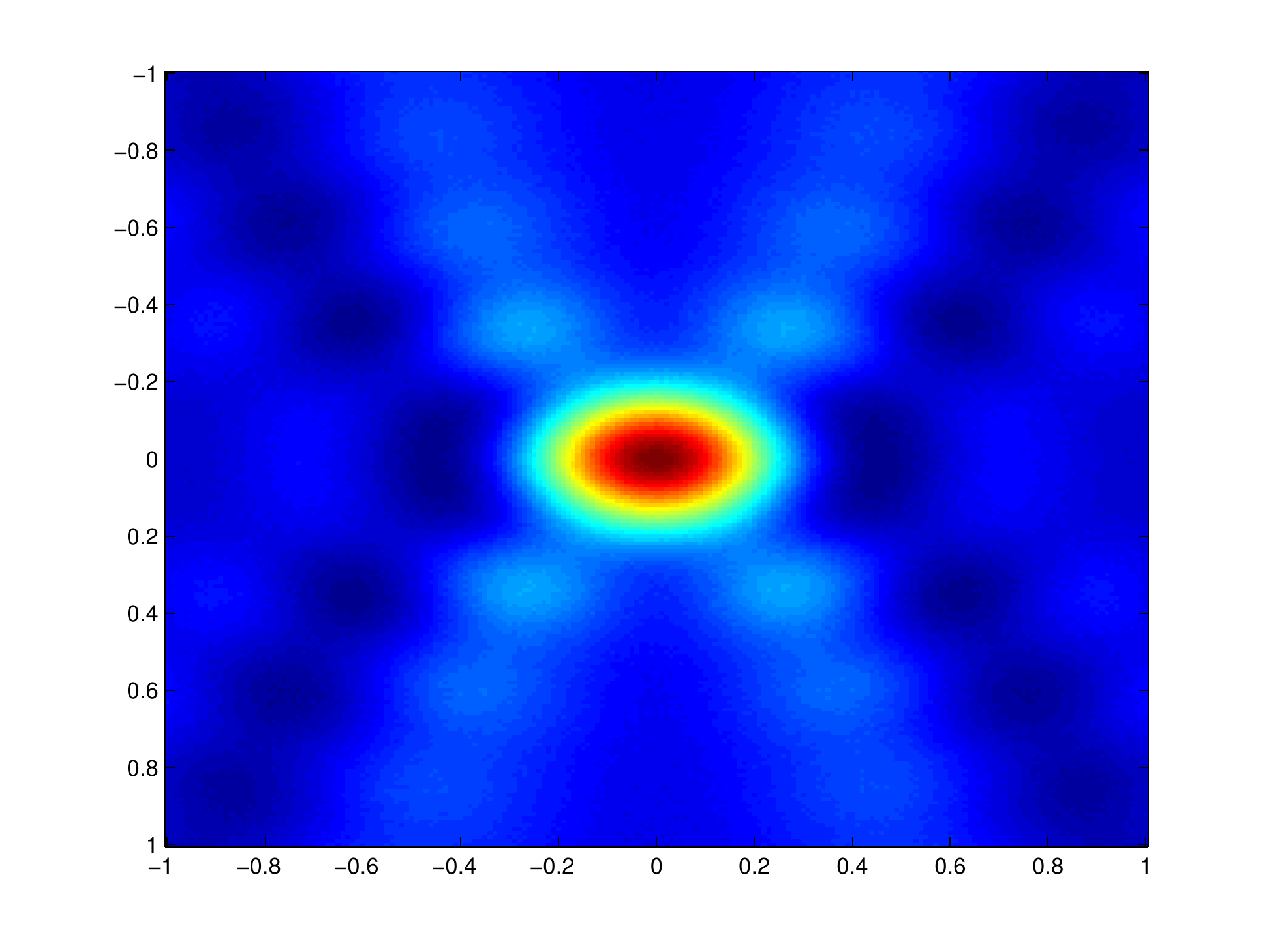}
\caption{Detection of spherical inclusion with permeability contrast ($\epsilon_0=1=\epsilon_1$, $\mu_0=1$ and $\mu_1=2$) at $\K=4\pi$. Left: $10\%$ noise. Right: $20\%$ noise. Top: $\bz_S\mapsto\Itd[\mathbf{E}]_0(\bz_S)$. Bottom: Focal spot.}\label{fig:4}
\end{figure} 

\subsection{Inclusion detection using multiple measurements}

The analysis in the previous section elucidates the appositeness of the topological derivative based indicator function $\Itd$ with a single measurement. In this section  an indicator function is built from $\Itd$ for the case of multiple measurements. Practically, the enriched data set with multiple measurements is expected to enhance the performance of the detection function in terms of stability and signal-to-noise ratio.   

Let $\theta_1, \theta_2, \cdots ,\theta_n\in\mathbb{S}^2$  be $n $ equi-distributed directions for some $n\in\mathbb{N}$ and $\theta_j^{\perp,\ell}$ ($\ell=1,2$) be such that $\{\theta_j,\theta_j^{\perp,1},\theta_j^{\perp,2}\}$ forms an orthonormal basis of $\RR^3$ for each $j\in\{1,\cdots, n\}$. Consider $n$ incident electric fields satisfying \eqref{E0}  by
\begin{equation}\label{Ej}
\mathbf{E}_0^{j,\ell}(\bx):=i\K\theta_j\times\theta_j^{\perp,\ell}e^{i\K\theta^\top_j\bx},\qquad \forall\,\bx\in\RR^3,
\end{equation}
and construct the topological sensitivity based  multi-measurement location indicator function by
\begin{equation}\label{MultiFunc}
\Itd^n(\bz_S):= \frac{1}{n}\sum_{\ell=1}^2\sum_{j=1}^n\Itd[\mathbf{E}_0^{j,\ell}](\bz_S),\qquad\bz_S\in\RR^3. 
\end{equation}

Before further discussion, note that 
$$
\theta_j^{\perp,1}\left(\theta_j^{\perp,1}\right)^\top+\theta_j^{\perp,2}\left(\theta_j^{\perp,2}\right)^\top=\left(\mathbf{I}_3-\theta_j\theta_j^\top\right),
\qquad j=1,2\cdots,n,
$$
since $\big\{\theta_j,\theta_j^{\perp,1},\theta_j^{\perp,2}\big\}$ is a basis of $\RR^3$. Moreover, for $n$ sufficiently large 
$$
\frac{1}{n}\sum_{j=1}^n e^{i\K\theta_j^\top(\bx-\by)}\approx j_0(\K|\bx-\by|)=\frac{4\pi}{\K}\Im m\{g(\bx,\by)\},
\qquad\forall\, \bx,\by\in\RR^3,\, \bx\neq \by.
$$
Thus
\begin{align}
\nonumber
\frac{1}{n}\sum_{\ell=1}^2\sum_{j=1}^n e^{i\K\theta_j^\top(\bx-\by)} 
\theta_j^{\perp,\ell} \left(\theta_j^{\perp,\ell}\right)^\top
=&
\frac{1}{n}\sum_{j=1}^n \left(\mathbf{I}_3-\theta_j\theta_j^\top\right)
 e^{i\K\theta_j^\top(\bx-\by)}
\\
\nonumber
=& \frac{1}{n}\sum_{j=1}^n\left(\mathbf{I}_3+\frac{1}{\K^2}\nabla_\bx\nabla_\bx^\top\right) e^{i\K\theta_j^\top(\bx-\by)}
\\
\nonumber
=& \left(\mathbf{I}_3+\frac{1}{\K^2}\nabla_\bx\nabla_\bx^\top\right)\left(\frac{1}{n}\sum_{j=1}^n e^{i\K\theta_j^\top(\bx-\by)}\right)
\\
\approx
&
-\frac{4\pi}{\K\epsilon_0}\Im m\big\{\mathbf{\Gamma}(\bx,\by)\big\}.\label{theta1}
\end{align}
Similarly, as $\ds\sum_{\ell=1}^2\left(\theta_j\times \theta_j^{\perp,\ell}\right)\left(\theta_j\times \theta_j^{\perp,\ell}\right)^\top= \left(\mathbf{I}_3-\theta_j\theta_j^\top\right)$ for all $j=1,\cdots,n$, we have
\begin{equation}
\frac{1}{n}\sum_{\ell=1}^2\sum_{j=1}^n  e^{i\K\theta_j^\top(\bx-\by)} \left(\theta_j\times \theta_j^{\perp,\ell}\right)\left(\theta_j\times \theta_j^{\perp,\ell}\right)^\top
\approx
-\frac{4\pi}{\K\epsilon_0}\Im m\big\{\mathbf{\Gamma}(\bx,\by)\big\},
\quad\bx,\by\in\RR^3,\, \bx\neq\by.\label{theta2}
\end{equation}

The following result holds.

\begin{thm}\label{multiResolution}
Let $\bz_S\in\RR^3$, $n\in\mathbb{N}$ be sufficiently large and $\rho\K\ll 1$. Then 
\begin{enumerate}
\item  for permittivity contrast only  ($\mu_0=\mu_1=\mu_2$) 
\begin{align}
\Itd^n(\bz_S)
\simeq \frac{\rho^3\K^4a_1^\epsilon a_2^\epsilon}{\epsilon_0^2}\Re e\Big\{\Im m\left\{\mathbf{\Gamma}(\bz_S,\bz_D)\right\} \mathbf{M}_{D}^\epsilon: \mathbf{M}_{S}^\epsilon\Im m\left\{\mathbf{\Gamma}(\bz_S,\bz_D)\right\} \Big\}
+ O(\rho^4),\label{TDmulti2}
\end{align}
\item for a permeability contrast only ($\epsilon_0=\epsilon_1=\epsilon_2$)
\begin{align}
\Itd^n(\bz_S) \simeq  \frac{\rho^3\K^4 a_1^\mu a_2^\mu}{\epsilon_0^2}\Re e\Big\{\Im m\left\{\mathbf{\Gamma}(\bz_S,\bz_D)\right\}\mathbf{M}_{D}^\mu:\mathbf{M}_{S}^\mu
\Im m\left\{\mathbf{\Gamma}(\bz_S,\bz_D)\right\}\Big\} +O(\rho^4).\label{TDmulti1}
\end{align}
\end{enumerate}
\end{thm}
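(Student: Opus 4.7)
The plan is to evaluate the single-measurement asymptotic expansions \eqref{TDeps2} and \eqref{TDmu2} on the family of incident fields $\mathbf{E}_0^{j,\ell}$, average over $(j,\ell)$, and collapse the direction/polarization sum via the Riemann-sum identities \eqref{theta1}-\eqref{theta2}. The key algebraic reduction I would rely on is that the scalar in the argument of $\Re e\{\cdot\}$ inside \eqref{TDeps2} and \eqref{TDmu2} can be recast as a Frobenius contraction of a rank-one outer product (which carries all the $(j,\ell)$-dependence) with a matrix independent of $(j,\ell)$, via the identity $\mathbf{u}^\top M\overline{\mathbf{v}}=(\mathbf{u}\,\overline{\mathbf{v}}^\top):M$.

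For the permittivity contrast case, setting $A:=\Im m\{\mathbf{\Gamma}(\bz_S,\bz_D)\}$ and using the symmetry $(\mathbf{M}_S^\epsilon)^\top=\mathbf{M}_S^\epsilon$ from Lemma~\ref{PT}, the expansion \eqref{TDeps2} rewrites as
\begin{align*}
\Itd[\mathbf{E}_0^{j,\ell}](\bz_S)=-\frac{\rho^3\K^3 a_1^\epsilon a_2^\epsilon}{4\pi\epsilon_0}\Re e\Bigl\{\bigl(\mathbf{E}_0^{j,\ell}(\bz_S)\,\overline{\mathbf{E}_0^{j,\ell}(\bz_D)}^\top\bigr):\bigl(\mathbf{M}_S^\epsilon A\,\mathbf{M}_D^\epsilon\bigr)\Bigr\}+O(\rho^4).
\end{align*}
A direct computation using $\overline{i\K}=-i\K$ gives the outer product $\mathbf{E}_0^{j,\ell}(\bz_S)\,\overline{\mathbf{E}_0^{j,\ell}(\bz_D)}^\top=\K^2(\theta_j\times\theta_j^{\perp,\ell})(\theta_j\times\theta_j^{\perp,\ell})^\top e^{i\K\theta_j^\top(\bz_S-\bz_D)}$, so averaging and invoking \eqref{theta2} replaces this outer-product sum by $-\frac{4\pi\K}{\epsilon_0}A$ up to the direction-discretization error absorbed in the symbol $\simeq$. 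Substituting back yields the claim \eqref{TDmulti2}.

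The permeability contrast case is handled identically starting from \eqref{TDmu2} and using $\nabla\times\mathbf{E}_0^{j,\ell}(\bx)=\K^2\theta_j^{\perp,\ell}e^{i\K\theta_j^\top\bx}$ — which follows from \eqref{CurlE} together with the BAC-CAB identity $\theta_j\times(\theta_j\times\theta_j^{\perp,\ell})=-\theta_j^{\perp,\ell}$ — so that the same contraction reformulation produces an outer product proportional to $\theta_j^{\perp,\ell}(\theta_j^{\perp,\ell})^\top e^{i\K\theta_j^\top(\bz_S-\bz_D)}$. Identity \eqref{theta1} then collapses the sum to $-\frac{4\pi\K^3}{\epsilon_0}A$, yielding \eqref{TDmulti1}. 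To arrive at the cosmetic form stated in the theorem I would finally use the symmetries $A=A^\top$, $\mathbf{M}_S=\mathbf{M}_S^\top$, $\mathbf{M}_D=\mathbf{M}_D^\top$ and the cyclic property of the trace to observe
\begin{align*}
A:(\mathbf{M}_S A\mathbf{M}_D)=\mathrm{tr}(A\mathbf{M}_S A\mathbf{M}_D)=\mathrm{tr}\bigl((A\mathbf{M}_D)^\top \mathbf{M}_S A\bigr)=(A\mathbf{M}_D):(\mathbf{M}_S A).
\end{align*}

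The computation is elementary rather than deep, so the principal obstacle is bookkeeping: keeping the transposes, complex conjugations, and the sign produced by $(i\K)(-i\K)=\K^2$ under control so that the prefactor simplifies to exactly $\rho^3\K^4/\epsilon_0^2$ with the correct sign after the $-\frac{4\pi\K}{\epsilon_0}$ (resp.\ $-\frac{4\pi\K^3}{\epsilon_0}$) from \eqref{theta2} (resp.\ \eqref{theta1}) cancels against the $-\frac{1}{4\pi\epsilon_0}$ prefactor in \eqref{TDeps2} (resp.\ \eqref{TDmu2}). A secondary subtlety is that the discretization error from \eqref{theta1}-\eqref{theta2} is independent of $\rho$ and vanishes only as $n\to\infty$; it must therefore be kept conceptually distinct from the $O(\rho^4)$ asymptotic remainder and combined only at the final step.
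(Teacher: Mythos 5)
Your proposal is correct and follows essentially the same route as the paper: both substitute the plane-wave incident fields into the single-measurement expansions \eqref{TDeps2} and \eqref{TDmu2}, rewrite the resulting scalar as a Frobenius contraction so that all $(j,\ell)$-dependence sits in a rank-one outer product, and collapse the sum via \eqref{theta1}--\eqref{theta2}; your grouping of the contraction (outer product of field values against $\mathbf{M}_S^\gamma A\,\mathbf{M}_D^\gamma$) differs from the paper's only by the cyclic/symmetry manipulations you already note at the end. Your closing remark that the direction-discretization error is independent of $\rho$ and distinct from the $O(\rho^4)$ remainder is a point the paper leaves implicit in the symbol $\simeq$, but it does not change the argument.
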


\begin{proof}
By definition of the multi-measurement location indicator function and the asymptotic expansion \eqref{TDeps2} of $\Itd$ we have 
\begin{align*}
\Itd^n(\bz_S)
=& 
- \frac{\rho^3\K^3a_1^\epsilon a_2^\epsilon}{4\pi\epsilon_0n} \sum_{\ell=1}^2\sum_{j=1}^n
\Re e\Big\{
\Im m\left\{\mathbf{\Gamma}(\bz_S,\bz_D)\right\}\mathbf{M}^\epsilon_D
\overline{\mathbf{E}_0^{j,\ell}(\bz_D)}
\cdot \mathbf{M}_S^\epsilon
\mathbf{E}_0^{j,\ell}(\bz_S)\Big\} +O(\rho^4)
\\
=& 
- \frac{\rho^3\K^5a_1^\epsilon a_2^\epsilon}{4\pi\epsilon_0n} \sum_{\ell=1}^2 \sum_{j=1}^n
\Re e\Big\{
\Im m\left\{\mathbf{\Gamma}(\bz_S,\bz_D)\right\}\mathbf{M}^\epsilon_D
(\theta_j\times\theta_j^{\perp,\ell})\cdot
\\
&
\qquad\qquad\qquad\qquad\qquad\qquad\qquad\qquad\qquad
\mathbf{M}_S^\epsilon
(\theta_j\times\theta_j^{\perp,\ell})e^{i\K\theta_j^\top(\bz_S-\bz_D)}
\Big\} +O(\rho^4)
\end{align*}
for all search points $\bz_S\in\RR^3$ when the inclusion has only permittivity contrast. Since $\mathbf{A}\theta\cdot\theta =\mathbf{A}: \theta\theta^\top$ for any matrix $\mathbf{A}\in\RR^{3\times 3}$ and $\theta\in\mathbb{S}^2$, the approximation \eqref{theta2} yields
\begin{align*}
\Itd^n(\bz_S)
=& 
- \frac{\rho^3\K^5a_1^\epsilon a_2^\epsilon}{4\pi\epsilon_0n} \sum_{\ell=1}^2\sum_{j=1}^n\Re e\Big\{\Im m\left\{\mathbf{\Gamma}(\bz_S,\bz_D)\right\}\mathbf{M}^\epsilon_D :
\\
& \qquad\qquad\qquad\qquad\qquad\qquad 
\mathbf{M}_S^\epsilon
(\theta_j\times\theta_j^{\perp,\ell})(\theta_j\times\theta_j^{\perp,\ell})^\top e^{i\K\theta_j^\top(\bz_S-\bz_D)}
\Big\} +O(\rho^4)
\\
\simeq & \phantom{-} 
\frac{\rho^3\K^4a_1^\epsilon a_2^\epsilon}{\epsilon_0^2}
\Re e\Big\{
\Im m\left\{\mathbf{\Gamma}(\bz_S,\bz_D)\right\}\mathbf{M}^\epsilon_D
:\mathbf{M}_S^\epsilon \Im m\left\{\mathbf{\Gamma}(\bz_S,\bz_D)\right\}
\Big\} +O(\rho^4).
\end{align*}
Therefore, the first assertion is proved. 

In order to prove the second assertion  note that by virtue of expansion \eqref{TDmu2} for a permeable inclusion and by the fact that $\theta_j\perp\theta_j^{\perp,\ell}$, the function $\Itd^n(\bz_S)$ takes on the form
\begin{align*}
\Itd^n(\bz_S)
=& 
- \frac{\rho^3\K a_1^\mu a_2^\mu}{4\pi\epsilon_0n}
\sum_{\ell=1}^2\sum_{j=1}^n
\Re e\left\{
\Im m\big\{\mathbf{\Gamma}(\bz_S,\bz_D)\big\} 
\mathbf{M}_D^\mu\overline{\nabla\times
\mathbf{E}_0^{j,\ell}(\bz_D)}\cdot \mathbf{M}_S^\mu\nabla\times \mathbf{E}_0^{j,\ell}(\bz_S)\right\}
\\
=& 
- \frac{\rho^3\K^5a_1^\mu a_2^\mu}{4\pi\epsilon_0n}
\sum_{\ell=1}^2\sum_{j=1}^n
\Re e\Big\{
\Im m\left\{\mathbf{\Gamma}(\bz_S,\bz_D)\right\}\mathbf{M}^\mu_D
\left(\theta_j\times(\theta_j\times\theta_j^{\perp,\ell})\right)
\cdot 
\\
&\qquad\qquad\qquad\qquad\qquad\qquad\qquad\qquad
\mathbf{M}_S^\mu
\left(\theta_j\times(\theta_j\times\theta_j^{\perp,\ell})\right)
e^{i\K\theta_j^\top(\bz_S-\bz_D)}
\Big\}+O(\rho^4)
\\
=& 
- \frac{\rho^3\K^5a_1^\mu a_2^\mu}{4\pi\epsilon_0n}\sum_{\ell=1}^2\sum_{j=1}^n
\Re e\Big\{
\Im m\left\{\mathbf{\Gamma}(\bz_S,\bz_D)\right\}\mathbf{M}^\mu_D
\theta_j^{\perp,\ell} \cdot  \mathbf{M}_S^\mu
\theta_j^{\perp,\ell} e^{i\K\theta_j^\top(\bz_S-\bz_D)}
\Big\}+O(\rho^4).
\end{align*}
The use of identity $\mathbf{A}\theta\cdot\theta =\mathbf{A}: \theta\theta^\top$ once again, together with approximation \eqref{theta1}, leads to  
\begin{align*}
\Itd^n(\bz_S)
=&  
- \frac{\rho^3\K^5a_1^\mu a_2^\mu}{4\pi\epsilon_0n} \sum_{\ell=1}^2\sum_{j=1}^n
\Re e\Big\{
\Im m\left\{\mathbf{\Gamma}(\bz_S,\bz_D)\right\}\mathbf{M}^\mu_D
: \mathbf{M}_S^\mu
\theta_j^{\perp,\ell} \left(\theta_j^{\perp,\ell}\right)^\top e^{i\K\theta_j^\top(\bz_S-\bz_D)}
\Big\}
\\
&+O(\rho^4)
\\
\simeq& 
\phantom{-.}
\frac{\rho^3\K^4a_1^\mu a_2^\mu}{\epsilon_0^2}
\Re e\Big\{
\Im m\left\{\mathbf{\Gamma}(\bz_S,\bz_D)\right\}\mathbf{M}^\mu_D
: \mathbf{M}_S^\mu\Im m\big\{\mathbf{\Gamma}(\bz_S,\bz_D)\big\}
\Big\}+O(\rho^4).
\end{align*}
This completes the prove.
\end{proof}

As an immediate consequence of  Theorem \ref{multiResolution} and Lemma \ref{PT}, the following result is evident.

\begin{cor}\label{cor}
Let $\bz_S\in\RR^3$, $D=\rho B_D +\bz_D$ be a spherical inclusion with only permittivity contrast or permeability contrast, $n\in\mathbb{N}$ be sufficiently large and $\rho\K\ll 1$. Then  
\begin{align*}
\Itd^n(\bz_S) \simeq & \rho^3\K^4{C}_\gamma\left\|\Im m\Big\{\mathbf{\Gamma}(\bz_S,\bz_D)\Big\}\right\|^2 +O(\rho^4),
\end{align*}
where  
\begin{eqnarray*}
C_\gamma:= \frac{9(\gamma_0-\gamma_1)(\gamma_0-\gamma_2)}{\epsilon_0^2(2\gamma_0+\gamma_1)(2\gamma_0+\gamma_2)}|B_D|\,|B_S|
\end{eqnarray*}
and $\gamma$ denotes $\epsilon$ (resp. $\mu$) for an inclusion with only permittivity (resp. permeability) contrast.
\end{cor}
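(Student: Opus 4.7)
The plan is purely algebraic: substitute the spherical form of the polarization tensor from Lemma \ref{PT} into the asymptotic expansions \eqref{TDmulti2} and \eqref{TDmulti1} furnished by Theorem \ref{multiResolution}, and simplify. I will treat only the permittivity-contrast case in detail, since the permeability case is identical after swapping $\epsilon \leftrightarrow \mu$.

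First I would apply Lemma \ref{PT} to write $\mathbf{M}_D^\epsilon = \alpha_D \mathbf{I}_3$ and $\mathbf{M}_S^\epsilon = \alpha_S \mathbf{I}_3$ with
$$\alpha_D = \frac{3\epsilon_1}{2\epsilon_0+\epsilon_1}|B_D|, \qquad \alpha_S = \frac{3\epsilon_2}{2\epsilon_0+\epsilon_2}|B_S|.$$
Inserting this into \eqref{TDmulti2} collapses the matrix product $\Im m\{\mathbf{\Gamma}\} \mathbf{M}_D^\epsilon : \mathbf{M}_S^\epsilon \Im m\{\mathbf{\Gamma}\}$ to $\alpha_D\alpha_S \,\Im m\{\mathbf{\Gamma}\} : \Im m\{\mathbf{\Gamma}\}$. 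The final step is to recognize this contraction as the squared Frobenius norm $\|\Im m\{\mathbf{\Gamma}(\bz_S,\bz_D)\}\|^2$.

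Next I would combine the contrast factor $a_1^\epsilon a_2^\epsilon = (\epsilon_0-\epsilon_1)(\epsilon_0-\epsilon_2)/(\epsilon_1\epsilon_2)$ with $\alpha_D\alpha_S$; the factors of $\epsilon_1$ and $\epsilon_2$ in the denominators cancel against those in the numerators of $\alpha_D$ and $\alpha_S$, leaving precisely
$$a_1^\epsilon a_2^\epsilon \alpha_D \alpha_S \;=\; \frac{9(\epsilon_0-\epsilon_1)(\epsilon_0-\epsilon_2)}{(2\epsilon_0+\epsilon_1)(2\epsilon_0+\epsilon_2)}|B_D|\,|B_S|,$$
which, after dividing by the overall $\epsilon_0^2$ that is already present in \eqref{TDmulti2}, is exactly $\epsilon_0^2 C_\epsilon$.

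The only subtlety to verify is that the outer $\Re e\{\cdot\}$ in \eqref{TDmulti2} may be dropped: since $\mathbf{\Gamma}$ is symmetric (by the reciprocity property stated after its definition) and $\Im m\{g(\bx,\by)\} = \sin(\K|\bx-\by|)/(4\pi|\bx-\by|)$ is a real symmetric kernel, the matrix $\Im m\{\mathbf{\Gamma}(\bz_S,\bz_D)\}$ is a real symmetric $3\times 3$ matrix. Combined with the real, symmetric (scalar multiple of identity) polarization tensors from Lemma \ref{PT}, the entire quantity inside $\Re e\{\cdot\}$ is real, so the real-part operator acts trivially. This is the only non-computational observation needed; there is no genuine obstacle, which is why the statement is flagged as an immediate consequence. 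The permeability case is identical, using \eqref{TDmulti1} in place of \eqref{TDmulti2} and replacing every $\epsilon_j$ by $\mu_j$ except the overall $1/\epsilon_0^2$ prefactor, which explains why $C_\mu$ retains $\epsilon_0^2$ in its denominator.
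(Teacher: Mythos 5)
Your proposal is correct and is exactly the computation the paper intends: the paper gives no written proof, declaring the corollary ``an immediate consequence of Theorem \ref{multiResolution} and Lemma \ref{PT}'', and your substitution of the spherical polarization tensors, cancellation of $\epsilon_1\epsilon_2$ (resp.\ $\mu_1\mu_2$) between $a_1^\gamma a_2^\gamma$ and $\alpha_D\alpha_S$, identification of the contraction with the squared Frobenius norm, and the observation that $\Im m\{\mathbf{\Gamma}(\bz_S,\bz_D)\}$ is real so the $\Re e$ is vacuous, is precisely that argument. No gaps.
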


Theorem \ref{multiResolution} and Corollary \ref{cor} substantiate that $\Itd^n(\bz_S)\propto \|\Im m\left\{\mathbf{\Gamma}(\bz_S,\bz_D)\right\}\|^2$. In view of the decay properties of imaginary part of the fundamental solution  it is evident that $\Itd^n$ has a peak sharper than that of $\Itd$ when $\bz_S\to\bz_D$. Moreover, $\Itd^n$ admits its most pronounced positive value if the contrasts of true and trial inclusions have same signs as the constant $C_\gamma$ is positive in this case.
In order to further elaborate our findings a numerical experiment for the detection of a spherical inclusion, centered at origin and having radius $\rho=0.01$ with only permittivity contrast, is presented in Figure \ref{fig:5}. 
We choose 
$$
\theta_j=(\sin(\phi_m)\cos(\psi_n),\sin(\phi_m)\sin(\psi_n),\cos(\phi_m))^\top,
$$  
where 
$$\phi_m=\frac{(m-1) \pi}{M},
\quad 
\psi_n=\frac{2(n-1)\pi}{N},
\quad 
\theta_j^{\perp,1}=\frac{\theta_j\times \mathcal{R}\theta_j}{|\theta_j\times \mathcal{R}\theta_j|}, \quad 
\theta_j^{\perp,2}=\theta_j^{\perp,1}\times\theta_j.
$$
Here $\mathcal{R}$ is a rotation defined by 
$$
\mathcal{R}=
\begin{pmatrix}1&0&0
\\ 0&0&1\\0&-1&0
\end{pmatrix}.
$$   
We choose $\mu_0=1=\mu_1$, $\epsilon_0=1$, $\epsilon_1=2$, $\K=8\pi$, and plot $\bz_S\mapsto\Itd^n(\bz_S)$ for all $\bz_S\in [-1,1]\times [-1,1]\times\{0\}$ using $101\times 101$ sampling points. The numerical results in Figure \ref{fig:5} clearly show a significant increase in the amplitude of the peak of $\bz_S\mapsto\Itd^n(\bz_S)$ with an increase in the number of incident fields. Moreover, a stabilization effect in terms of the ripples around the main peak is visible. 
\begin{figure}[!ht]
\includegraphics[width=0.43\textwidth]{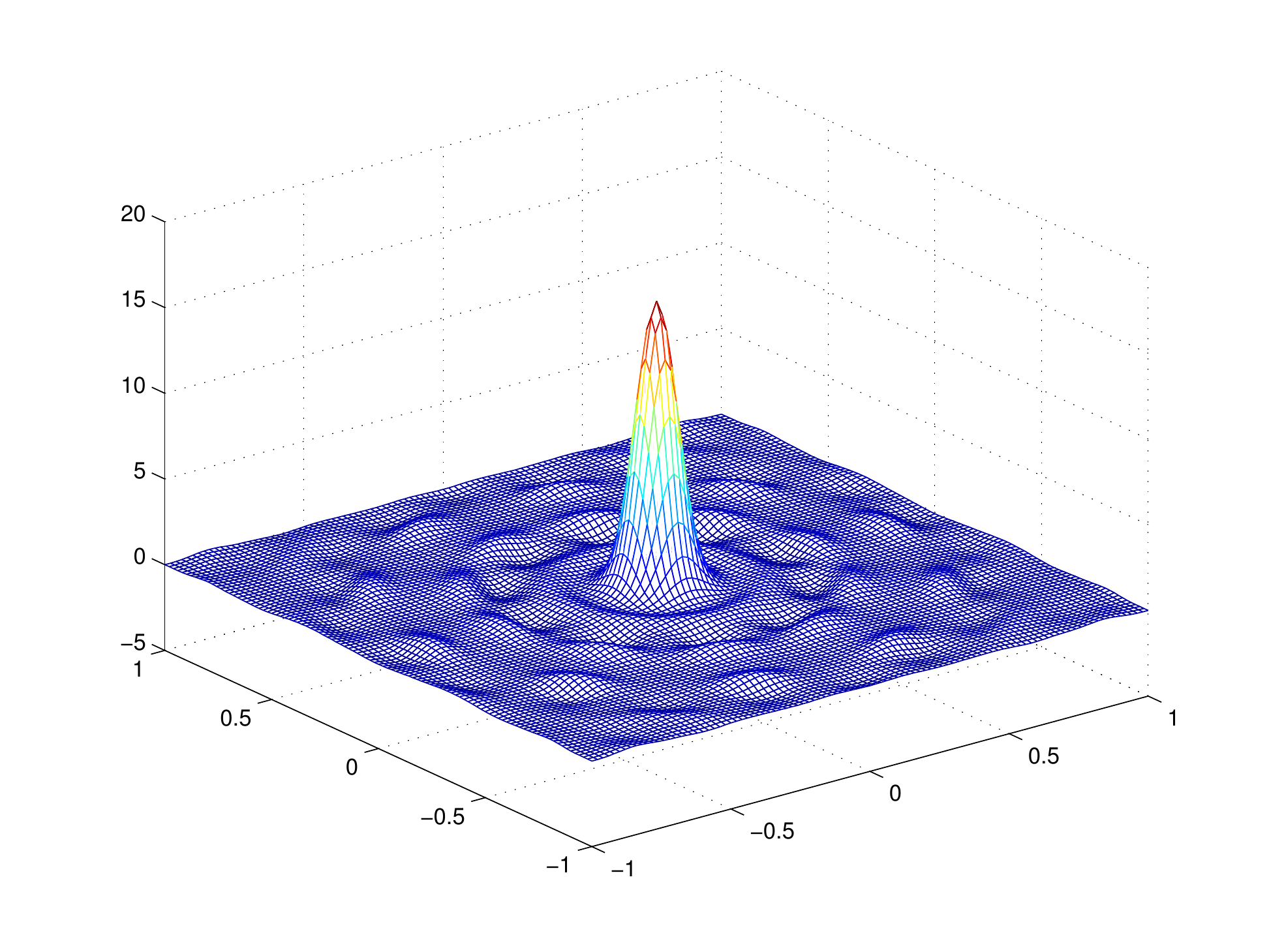}
\includegraphics[width=0.43\textwidth]{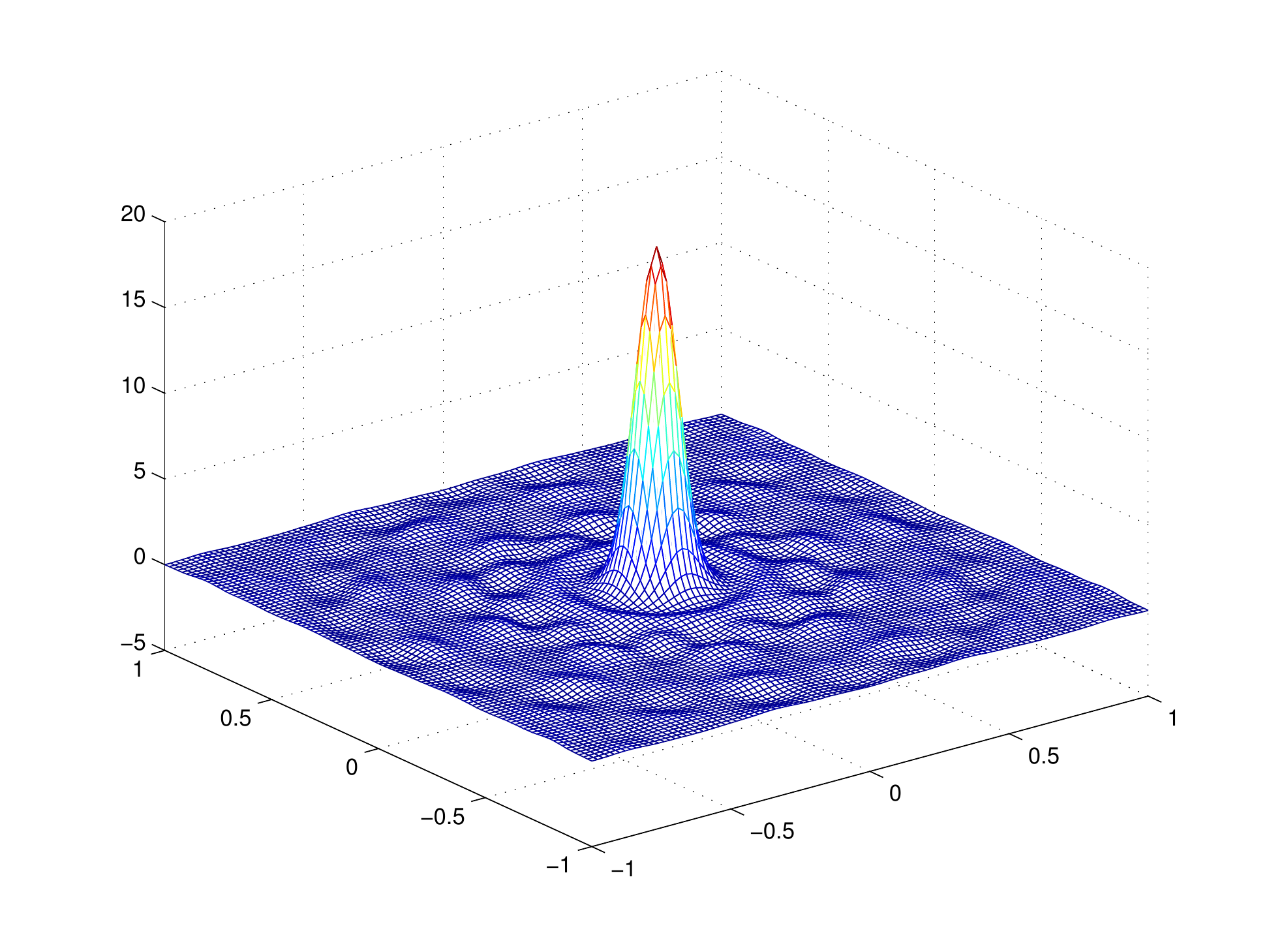}
\includegraphics[width=0.48\textwidth]{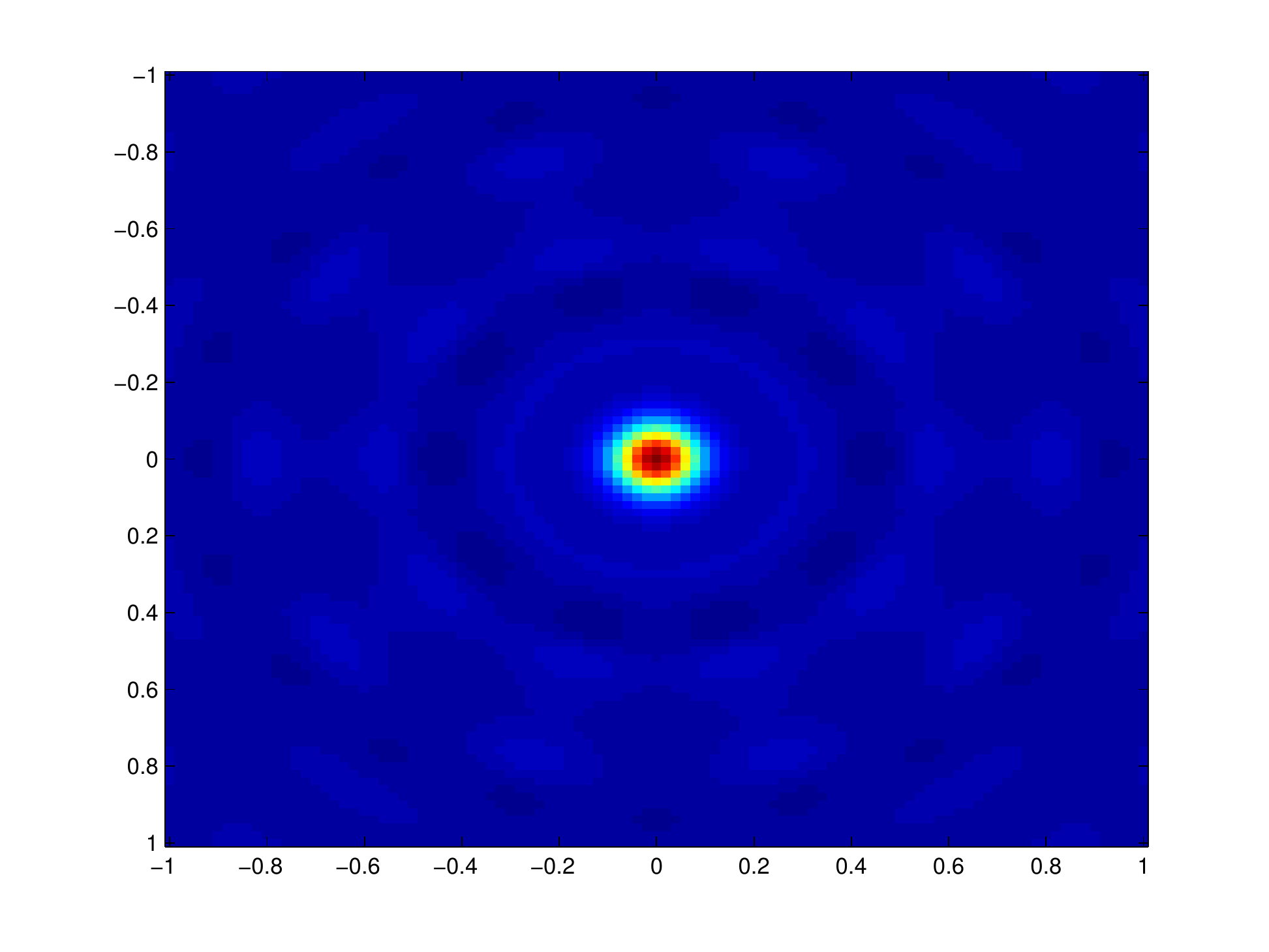}
\includegraphics[width=0.48\textwidth]{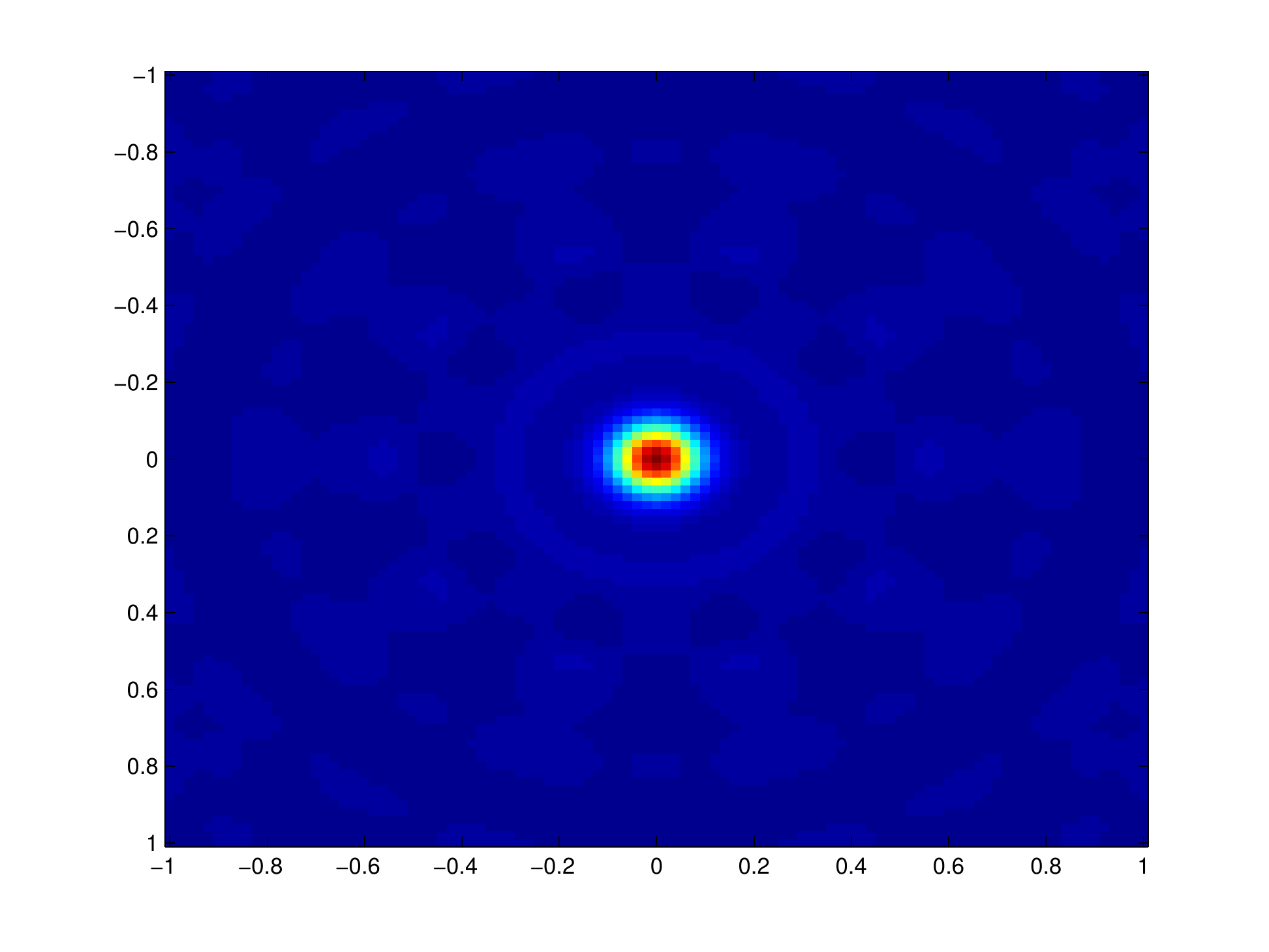}
\caption{Detection of a spherical inclusion with permittivity contrast. Left: $50$ incident fields ($N=M=5$). Right: $200$ incident fields ($N=M=10$). Top: $\bz_S\mapsto\Itd^n(\bz_S)$. Bottom: Focal spot.}\label{fig:5}
\end{figure}

In the rest of this article, the stability of the multi-measurement location indicator function $\Itd^n(\bz_S)$ with respect to medium and measurement noises is investigated.

\section{Statistical stability  with respect to measurement noise}\label{measNoise}

The aim in this section is to prove the statistical stability of the function $\Itd^n(\bz_S)$ with respect to additive measurement noise. The simplest model of measurement noise is taken into account. Precisely, it is assumed that the measurements of the far field amplitude are corrupted by a mean-zero circular Gaussian noise $\etab:\mathbb{S}^2\to\mathbb{C}^3$, that is, 
$$
\mathbf{E}_\rho^\infty(\hbx):= \widetilde{\mathbf{E}^{\infty}_\rho}(\hbx)+\etab(\hbx),\qquad \hbx\in\mathbb{S}^2,
$$
where $\mathbf{E}_\rho^\infty$ represents the corrupted far field data and $\widetilde{\mathbf{E}^{\infty}_\rho}(\hbx)$ indicates true data without noise corruption.  Let $\sigmae^2$ be the noise covariance of $\etab$ such that for all $\hby,\hby'\in\mathbb{S}^2$
\begin{eqnarray}
\mathbb{E}\left[\etab(\hby)\overline{\etab(\hby')}^\top\right]
=\sigmae^2\delta_\hby(\hby')\mathbf{I}_3
\quad\text{and}\quad
\mathbb{E}\left[\etab^j(\hby)\overline{\etab^{j'}(\hby')}^\top\right]
=\sigmae^2\delta_{jj'}\delta_\hby(\hby')\mathbf{I}_3,\label{EEcor}
\end{eqnarray}
where $j$ and $j'$ indicate the $j$-th and $j'$-th measurements and $\delta_{jj'}$ is Kronecker's delta function. Here $\mathbb{E}$ denotes the expectation with respect to the statistics of noise. By \eqref{EEcor} following assumptions are actually made. 
\begin{itemize}
\item[i.] $\etab(\hby)$ and $\etab(\hby')$ are uncorrelated for all $\hby,\hby'\in\mathbb{S}^2$ whenever $\hby\neq\hby'$.
\item[ii.] $\Im m\{\etab(\hby)\}$ and $\Re e\{\etab(\hby)\}$  are uncorrelated for all $\hby\in\mathbb{S}^2$.
\item[iii.] The components of $\etab$ are uncorrelated.
\item[iv.] $\etab^j$ and $\etab^{j'}$ are uncorrelated but have common noise covariance $\sigmae^2$ for all $j,j'\in\{1,\cdots,n\}$ whenever $j\neq j'$.
\end{itemize}

It is interesting to note that, in view of Theorem \ref{thmTD} and definition \eqref{MultiFunc}, the noise affects $\Itd^n$ through Herglotz wave. In fact, by linearity of the Herglotz operator 
\begin{eqnarray}
\mathcal{H}_E[\mathbf{E}^\infty_\rho]=\mathcal{H}_E[\widetilde{\mathbf{E}^{\infty}_\rho}]+ \mathcal{H}_E[\etab],
\label{HeNoise}
\end{eqnarray}
where the first term on the RHS, $\mathcal{H}_E[\widetilde{\mathbf{E}^{\infty}_\rho}]$, is independent of noise and renders the principle peak of $\Itd^n$, which is discussed earlier in Section \ref{sensitivity}. On the other hand, the second term on the RHS, $\mathcal{H}_E[\etab]$, is a circular Gaussian random process with mean-zero by linearity and by definition of $\etab$. Furthermore, since by construction  $\etab\in L^2_T(\mathbb{S}^2)$,  for all $\bz,\bz'\in\RR^3$ 
\begin{align}
\mathbb{E}\Big[\mathcal{H}_E[\etab](\bz)
\overline{\mathcal{H}_E[\etab](\bz')}^\top\Big]
=&\mathbb{E}\left[\iint_{\mathbb{S}^2\times\mathbb{S}^2}
\etab(\hbx)\overline{\etab(\hby)^\top} e^{i\K\hbx^\top\bz} e^{-i\K\hby^\top\bz'} ds(\hbx)ds(\hby)\right]
\nonumber
\\
=&\mathbb{E}\left[\iint_{\mathbb{S}^2\times\mathbb{S}^2}
\hbx\times(\etab(\hbx)\times\hbx)\overline{\etab(\hby)^\top} e^{i\K\hbx^\top\bz} e^{-i\K\hby^\top\bz'} ds(\hbx)ds(\hby)\right].
\nonumber
\end{align}
Then, by virtue of identities \eqref{relation2} and \eqref{exp1},  and subsequently using assumptions in \eqref{EEcor} on additive noise $\etab$, one gets for all $\bz,\bz'\in\RR^3$
\begin{align}
\mathbb{E}\Big[\mathcal{H}_E[\etab](\bz)
\overline{\mathcal{H}_E[\etab](\bz')}^\top\Big]
=&\mathbb{E}\left[\iint_{\mathbb{S}^2\times\mathbb{S}^2}
(\mathbf{I}_3-\hbx\hbx^\top)\etab(\hbx)\overline{\etab(\hby)^\top} e^{i\K\hbx^\top\bz} e^{-i\K\hby^\top\bz'} ds(\hbx)ds(\hby)\right]
\nonumber
\\
=&\sigmae^2\left(\mathbf{I}_3+\frac{1}{\K^2}\nabla_\bz\nabla_\bz^\top\right) \int_{\mathbb{S}^2}e^{i\K\hbx^\top(\bz-\bz')}ds(\hbx)  
\nonumber
\\
=&
-\frac{4\pi\sigmae^2}{\K\epsilon_0}\Im m\big\{\mathbf{\Gamma}(\bz,\bz')\big\}.\label{Cov1}
\end{align}
The covariance of the Herglotz field $\mathcal{H}_E[\etab]$ shows that it precipitates a speckle pattern, that is, a random cloud of hot spots which have typical diameters of the order of the operating wavelength and  amplitudes of the order of $\sigmae /\sqrt{\K}$.

In the sequel, let us again stick to the cases of inclusions with only permittivity or permeability contrast. In order to understand the effect of noise on location detection capabilities of topological sensitivity functional, the covariance of indicator function $\Itd^n$ and the signal-to-noise ratios are derived in the following subsections.

\subsection{Permittivity contrast}\label{ss:meas:1}

Let us first entertain the case of an inclusion with permittivity contrast only when permeability parameters of both $D$ and $D_S$ match $\mu_0$, and $\Itd$ is provided by \eqref{TDeps}.  Then  the covariance of the corrupted location indicator function $\Itd^n$ is given by 
\begin{align*}
{\rm Cov}
&\big(\Itd^n(\bz),\Itd^n(\bz')\big)
\\
=&
\frac{\K^4(a_2^\epsilon)^2}{16\pi^2n^2}\sum_{\ell,\ell'=1}^2\sum_{j,j'=1}^n
\mathbb{E}\Big[
\ds\Re e\left\{
\overline{\mathcal{H}_E
[\etab^{j,\ell}](\bz)}
\cdot \mathbf{M}_S^\epsilon
\mathbf{E}_0^{j,\ell}(\bz)\right\}
\Re e\left\{
\overline{\mathcal{H}_E
[\etab^{j',\ell'}](\bz')}
\cdot \mathbf{M}_S^\epsilon
\mathbf{E}_0^{j',\ell'}(\bz')\right\}
\Big] 
\\
=
&
\frac{\K^4(a_2^\epsilon)^2}{32\pi^2n^2}\sum_{\ell=1}^2\sum_{j=1}^n
\ds\Re e\left\{\mathbf{M}_S^\epsilon
\mathbf{E}_0^{j,\ell}(\bz)\cdot 
\mathbb{E}\Big[
\overline{\mathcal{H}_E
[\etab^{j,\ell}](\bz)}
\Big(\mathcal{H}_E [\etab^{j,\ell}](\bz')\Big)^\top\Big]
\mathbf{M}_S^\epsilon
\overline{\mathbf{E}_0^{j,\ell}(\bz')}\right\},
\end{align*}
for all $\bz,\bz'\in\RR^3$, where the fact that $\mathcal{H}_E[\etab^{j,\ell}]$ and $\mathcal{H}_E[\etab^{j',\ell'}]$ are uncorrelated  for $j\neq j'$ and $\ell\neq \ell'$  is used.  By virtue of statistics \eqref{Cov1}  the covariance of $\Itd^n$  turns out to be 
\begin{align*}
{\rm Cov}
&\big(\Itd^n(\bz),\Itd^n(\bz')\big)
=
-\frac{\sigmae^2\K^3(a_2^\epsilon)^2}{8\pi n^2\epsilon_0}\sum_{\ell=1}^2\sum_{j=1}^n
\ds\Re e\left\{\mathbf{M}_S^\epsilon
\mathbf{E}_0^{j,\ell}(\bz)\cdot 
\Im m\big\{\mathbf{\Gamma}(\bz,\bz')\big\}
\mathbf{M}_S^\epsilon
\overline{\mathbf{E}_0^{j,\ell}(\bz')}\right\}.
\end{align*}
The above expression can be further simplified 
by substituting expression \eqref{Ej} for $\mathbf{E}^{j,\ell}_0$ and using approximation \eqref{theta2} as
\begin{align}
\nonumber
{\rm Cov}\big(&\Itd^n(\bz),\Itd^n(\bz')\big)
\\
=
&
-\frac{\sigmae^2\K^5(a_2^\epsilon)^2}{8\pi n^2\epsilon_0}
\sum_{\ell=1}^2\sum_{j=1}^n
\ds\Re e\left\{\mathbf{M}_S^\epsilon(\theta_j\times\theta_j^{\perp,\ell})\cdot 
\Im m\big\{\mathbf{\Gamma}(\bz,\bz')\big\}
\mathbf{M}_S^\epsilon
(\theta_j\times\theta_j^{\perp,\ell}) e^{i\K\theta^\top(\bz-\bz')}\right\}
\nonumber
\\
=
&
-\frac{\sigmae^2\K^5(a_2^\epsilon)^2}{8\pi n\epsilon_0}
\ds\Re e\Bigg\{\mathbf{M}_S^\epsilon\frac{1}{n}
\sum_{\ell=1}^2\sum_{j=1}^n(\theta_j\times\theta_j^{\perp,\ell})
(\theta_j\times\theta_j^{\perp,\ell})^\top e^{i\K
\theta^\top(\bz-\bz')}: 
\Im m\big\{\mathbf{\Gamma}(\bz,\bz')\big\}
\mathbf{M}_S^\epsilon
\Bigg\} 
\nonumber
\\
\approx
&\phantom{-.}
\frac{\sigmae^2\K^4(a_2^\epsilon)^2}{2n\epsilon_0^{2}}
\ds\Re e\Big\{\mathbf{M}_S^\epsilon \Im m\big\{\mathbf{\Gamma}(\bz,\bz')\big\}: 
\Im m\big\{\mathbf{\Gamma}(\bz,\bz')\big\}
\mathbf{M}_S^\epsilon
\Big\},\qquad\forall\bz,\bz'\in\RR^3.\label{cov:meas:1}
\end{align}

The covariance \eqref{cov:meas:1} of the perturbation in $\Itd^n$ appears to be exactly of the form \eqref{TDmulti2} of indicator function in the absence of noise. However, it is modulated by the factor $\sigmae^2/n$. Thus the hot spots in the speckle field  generated by the propagation of noise through $\Itd^n$ have shapes identical to that of actual peak procured by $\bz_S\mapsto\Itd^n(\bz_S)$ subject to accurate measurements. Nevertheless, the main spike of $\bz_S\mapsto\Itd^n(\bz_S)$ is not altered thanks to relation \eqref{HeNoise}. Further, the dependence of typical perturbation size on $\sqrt{n}$ suggests that the topological derivative based location indicator functions are more stable when multiple measurements are available at hand and increasing $n$ further enhances the stability. 

Now, assume for an instance that inclusions $D$ and $D_S$ are spherical. Then  it can be readily verified by virtue of Lemma \ref{PT} that
\begin{align}
{\rm Cov}\big(&\Itd^n(\bz),\Itd^n(\bz')\big)
\approx
\ds\sigmae^2\K^4|B_S|^2\frac{9(\epsilon_0-\epsilon_2)^2}{2n\epsilon_0^2(2\epsilon_{0}+\epsilon_2)^2}
\left\|\Im m\big\{\mathbf{\Gamma}(\bz,\bz')\big\}\right\|^2.\label{Cov2}
\end{align}
It follows immediately from \eqref{Cov2} that the variance of  $\Itd^n$  can be approximated by 
\begin{align}
{\rm Var}\big(\Itd^n(\bz)\big)
\approx &
\ds\sigmae^2\K^4|B_S|^2\frac{9(\epsilon_0-\epsilon_2)^2}{2n\epsilon_0^2(2\epsilon_{0}+\epsilon_2)^2}
\big\|\Im m\big\{\mathbf{\Gamma}(\bz,\bz)\big\}
\big\|^2
\nonumber
\\
=&
\ds \sigmae^2\K^6|B_S|^2\frac{3(\epsilon_0-\epsilon_2)^2}{8n\pi^2(2\epsilon_{0}+\epsilon_2)^2}.
\label{var1} 
\end{align}
Consequently  the signal-to-noise ratio (SNR), defined by 
$$
{\rm SNR}= \frac{\mathbb{E}\left[\Itd^n(\bz_D)\right]}{\sqrt{{\rm Var}\left[\Itd^n(\bz_D)\right]}},
$$
can be approximated by virtue of \eqref{var1} and Corollary \ref{cor} as
\begin{eqnarray}
{\rm SNR}\approx 
\frac{\sqrt{6}}{2\pi(2\epsilon_0+\epsilon_1)} \,\rho^3|B_D|\,|\epsilon_0-\epsilon_1|\,\K^3\,\sqrt{n}\, \frac{1}{\sigmae}.\label{SNR}
\end{eqnarray}
Clearly  SNR depends directly on contrast, volume of the inclusion, the operating frequency and number of plane incident fields, and is inversely proportional to the noise covariance. Specifically, the dependence of SNR on $n$ shows that a better resolution can be achieved in the presence of measurement noise by increasing the number of incident fields. 


\subsection{Permeability contrast}
Let us now consider the case of an inclusion with permeability contrast only, $\mu_2=\mu_0$ and $\Itd$ is given by relation \eqref{TDmu}. In order to evaluate the covariance of the corrupted indicator function  the statistics 
$$
\mathbb{E}\Big[\nabla\times\mathcal{H}_E[\etab](\bz)\overline{\nabla\times\mathcal{H}_E[\etab](\bz')}^\top\Big]
$$ 
is required. Towards this end, note that
\begin{align}
\mathbb{E}\Big[\nabla\times\mathcal{H}_E[\etab](\bz)
&
\overline{\nabla\times\mathcal{H}_E[\etab](\bz')}^\top\Big]
\nonumber
\\
=&
\K^2\mathbb{E}\left[\iint_{\mathbb{S}^2\times\mathbb{S}^2}
\left[\hbx\times\etab(\hbx)e^{i\K\hbx^\top\bz}\right]\left[\overline{\hby\times\etab(\hby)}e^{-i\K\hby^\top\bz'}\right]^\top   ds(\hbx)ds(\hby)\right] 
\nonumber
\\
=&\K^2\mathbb{E}\iint_{\mathbb{S}^2\times\mathbb{S}^2}
\left(\mathbf{I}_3\times\hbx\right)^\top\left[\etab(\hbx)\overline{\etab(\hby)}^\top\right]\left(\mathbf{I}_3\times\hby\right)e^{i\K\hbx^\top\bz} e^{-i\K\hby^\top\bz'} ds(\hbx)ds(\hby) 
\nonumber
\\
=&
\sigmae^2\K^2\int_{\mathbb{S}^2}
\hbx\times\left(\mathbf{I}_3\times\hbx\right) e^{i\K\hbx^\top(\bz-\bz')} ds(\hbx),
\nonumber
\end{align}
where the fact that $\mathbf{A}^\top(\mathbf{p}\times\mathbf{q})=(\mathbf{A}\times \mathbf{p})^\top\mathbf{q}$ for all $\mathbf{A}\in\RR^{3\times 3}$, $\mathbf{p},\mathbf{q}\in\RR^3$ is used together with assumption \eqref{EEcor}. Finally, identity \eqref{relation2} is invoked to get 
\begin{align}
\mathbb{E}\left[\nabla\times\mathcal{H}_E[\etab](\bz)
\overline{\nabla\times\mathcal{H}_E[\etab](\bz')}^\top\right]
=&\sigmae^2\K^2\int_{\mathbb{S}^2}
(\mathbf{I}_3-\hbx\hbx^\top)e^{i\K\hbx^\top(\bz-\bz')} ds(\hbx) 
\nonumber
\\
=&\sigmae^2\K^2\left(\mathbf{I}_3+\frac{1}{\K^2}\nabla_\bz\nabla_\bz^\top\right) \int_{\mathbb{S}^2}e^{i\K\hbx^\top(\bz-\bz')}ds(\hbx)   
\nonumber
\\
=&
-\frac{4\pi\sigmae^2\K}{\epsilon_0}\Im m\big\{\mathbf{\Gamma}(\bz,\bz')\big\}.\label{Cov3}
\end{align}

Let us now evaluate the covariance of $\Itd^n$. By definition, for the case of only permeability contrast,
\begin{align*}
{\rm Cov}
\big(\Itd^n(\bz),\Itd^n(\bz')\big)
=&
\frac{(a_2^\mu)^2}{16\pi^2n^2}\sum_{\ell,\ell'=1}^2\sum_{j,j'=1}^n
\mathbb{E}\Big[
\ds\Re e\left\{
\overline{\nabla\times\mathcal{H}_E
[\etab^{j,\ell}](\bz)}
\cdot \mathbf{M}_S^\mu
\nabla\times\mathbf{E}_0^{j,\ell}(\bz)\right\}
\\
&\qquad\qquad\qquad\qquad
\Re e\Big\{
\overline{\nabla\times\mathcal{H}_E
[\etab^{j',\ell'}](\bz')}
\cdot \mathbf{M}_S^\mu
\nabla\times\mathbf{E}_0^{j',\ell'}(\bz')\Big\}
\Big].
\end{align*} 
The above expression can be simplified by using the statistics \eqref{Cov3} together with the fact that $\nabla\times\mathcal{H}_E
[\etab^{j,\ell}](\bz)$ and $\nabla\times\mathcal{H}_E
[\etab^{j',\ell'}](\bz')$ are uncorrelated for all $j\neq j'$ and $\ell\neq \ell'$,  as
\begin{align*}
{\rm Cov}
\big(\Itd^n(\bz),&\Itd^n(\bz')\big)
\\=
&
\frac{(a_2^\mu)^2}{32\pi^2n^2}\sum_{\ell=1}^2\sum_{j=1}^n
\ds\Re e\Big\{\mathbf{M}_S^\mu
\nabla\times\mathbf{E}_0^{j,\ell}(\bz)\cdot 
\\
&\qquad\qquad\qquad\quad
\mathbb{E}\Big[
\overline{\nabla\times\mathcal{H}_E
[\etab^{j,\ell}](\bz)}
\Big(\nabla\times\mathcal{H}_E [\etab^{j,\ell}](\bz')\Big)^\top\Big]
\mathbf{M}_S^\mu
\overline{\nabla\times\mathbf{E}_0^{j,\ell}(\bz')}\Big\} 
\\
=
&
-\frac{\sigmae^2\K(a_2^\mu)^2}{8\pi n^2\epsilon_0}\sum_{\ell=1}^2\sum_{j=1}^n
\ds\Re e\left\{\mathbf{M}_S^\mu
\nabla\times\mathbf{E}_0^{j,\ell}(\bz)\cdot 
\Im m\big\{\mathbf{\Gamma}(\bz,\bz')\big\}
\mathbf{M}_S^\mu
\overline{\nabla\times\mathbf{E}_0^{j,\ell}(\bz')}\right\}.
\end{align*}
Finally, substituting the expression for $\nabla\times\mathbf{E}^{j,\ell}_0$ from \eqref{CurlE}, using the orthogonality of $\theta_j$ and $\theta_j^{\perp,\ell}$ for all $j=1,\cdots,n$, and invoking approximation \eqref{theta1}, one arrives at
\begin{align}
{\rm Cov}\big(\Itd^n(\bz),&\Itd^n(\bz')\big)
\nonumber
\\
=&
-\frac{\sigmae^2\K^5(a_2^\mu)^2}{8\pi n\epsilon_0}
\ds\Re e\Bigg\{\mathbf{M}_S^\mu\Bigg(\frac{1}{n}
\sum_{\ell=1}^2\sum_{j=1}^n\theta_j^{\perp,\ell}
(\theta_j^{\perp,\ell})^\top e^{i\K
\theta^\top(\bz-\bz')}\Bigg): 
\Im m\big\{\mathbf{\Gamma}(\bz,\bz')\big\}
\mathbf{M}_S^\mu
\Bigg\}
\nonumber
\\
\approx &\phantom{-.}
\frac{\sigmae^2\K^4(a_2^\mu)^2}{2n\epsilon_0^{2}}
\ds\Re e\Big\{\mathbf{M}_S^\mu \Im m\big\{\mathbf{\Gamma}(\bz,\bz')\big\}: 
\Im m\big\{\mathbf{\Gamma}(\bz,\bz')\big\}
\mathbf{M}_S^\mu
\Big\}.\label{CovPmeas}
\end{align}

As for the case of an inclusion with permittivity contrast, the statistics \eqref{CovPmeas} suggests that the speckle field generated by the noise $\etab$ is a random cloud of hot spots having the shapes identical to that of the principle peak modulated by the factor $\sigmae^2/n$. Moreover, the actual peak is not altered by the additive measurement noise. 
Further, when inclusions $D$ and $D_S$ are spherical, by Lemma \ref{PT} 
\begin{align}
{\rm Cov}\big(&\Itd^n(\bz),\Itd^n(\bz')\big)
\approx
\ds\sigmae^2\K^4|B_S|^2\frac{9(\mu_0-\mu_2)^2}{2n\epsilon_0^2(2\mu_{0}+\mu_2)^2}
\left\|\Im m\big\{\mathbf{\Gamma}(\bz,\bz')\big\}\right\|^2 \label{Cov4}
\end{align}
and consequently
\begin{align}
{\rm Var}\big(\Itd^n(\bz)\big)
\approx 
\ds \sigmae^2\K^6|B_S|^2\frac{3(\mu_0-\mu_2)^2}{8n\pi^2(2\mu_{0}+\mu_2)^2}.
\label{var2} 
\end{align}
Thus,  by virtue of \eqref{var2} and Corollary \ref{cor} 
\begin{eqnarray}
{\rm SNR}\approx 
\frac{\sqrt{6}}{2\pi(2\mu_0+\mu_1)}\, \rho^3|B_D|\,|\mu_0-\mu_1|\,\K^3\,\sqrt{n}\,\frac{1}{\sigmae}.\label{SNR2}
\end{eqnarray}
The expressions \eqref{Cov4}--\eqref{SNR2} for a permeable inclusion are very similar to \eqref{Cov2}--\eqref{SNR} obtained for the case of a dielectric inclusion. Therefore, the same conclusions can be drawn about the statistics of noise perturbation in $\Itd^n$ for an inclusion with permeability contrast as in the case of permittivity contrast already discussed in Section \ref{ss:meas:1}.


\section{Statistical stability with respect to medium noise}\label{medNoise}
Let us now investigate the stability of detection functional $\Itd^n$ with respect to medium noise. Assume for simplicity that only one of the permittivity or permeability fluctuates around the background value at a time. In this section  the homogeneous medium with parameters $\epsilon_0$ and $\mu_0$ in the absence of any inclusion is termed as the reference medium and the quantities related to the reference medium are distinguished by a superposed $0$. The inhomogeneous random medium without inclusion is termed as the background medium. 

\subsection{Random fluctuations in permeability}
\label{s:medNoise1}
Let the electromagnetic material loaded in $\RR^3$ be inhomogeneous such that its permeability randomly fluctuates around $\mu_0$. The fluctuating permeability is denoted by $\mu$ and is modeled as 
$$
\mu(\bx):=\mu_0\left(1+\eta(\bx)\right),\qquad \bx\in\RR^3,
$$
where $\eta:\RR^3\to\RR$ is the random fluctuation with typical size $\sigma_{\eta}$, which is small enough so that the Born approximation is valid. 
 
In order to fathom the role of medium noise in inclusion detection using $\Itd^n$  one needs to model the noisy measurements $\mathbf{E}^\infty_\rho$ and the back-propagator $\mathcal{H}_E[\mathbf{E}^\infty_\rho]$. Let us first consider $\mathbf{E}^\infty_\rho$  defined by
\begin{eqnarray}
\mathbf{E}_\rho(\bx)-\mathbf{E}_0^0(\bx)=\frac{e^{i\K|\bx|}}{|\bx|}\mathbf{E}^\infty_\rho(\hbx;\theta,\theta^\perp)+O\left(\frac{1}{|\bx|^2}\right), \quad\bx\in\RR^3,\theta\in\mathbb{S}^2.
\label{Einfty2}
\end{eqnarray}
Note that the reference incident field $\mathbf{E}_0^0$ is used since the differential measurements are with respect to the reference solution and the background medium is randomly fluctuating. This indicates that the far field scattering pattern is contaminated with \emph{clutter noise} induced by fluctuation $\eta$. In order to separate noise from the original signal in $\mathbf{E}^\infty_\rho$,  the quantity $\mathbf{E}_\rho(\bx)-\mathbf{E}_0^0(\bx)$ is expressed as the sum of $\mathbf{E}_\rho(\bx)-\mathbf{E}_0(\bx)$ and $\mathbf{E}_0(\bx)-\mathbf{E}_0^0(\bx)$  which can be approximated in Born regime. Indeed, under Born approximation,  
\begin{align}
\label{Born1}
\mathbf{E}_0(\bx) &=\mathbf{E}_0^0(\bx)-\frac{\K^2}{\epsilon_0}\int_{\RR^3} \mathbf{\Gamma}^0(\bx,\by)\eta(\by)\mathbf{E}^0_0(\by)d\by +o(\sigma_\eta).
\end{align}
Since the fundamental solution $\mathbf{\Gamma}^0$ can be expanded as
\begin{align*}
\mathbf{\Gamma}^0(\bx,\by)= -\frac{e^{i\K|\bx|}}{|\bx|}\left[\epsilon_0(\mathbf{I}_3-\hbx\hbx^\top)\frac{e^{-i\K\hbx^\top\by}}{4\pi}\right]+ O\left(\frac{1}{|\bx|^2}\right)
\end{align*}
for a large $\bx\in\RR^3$ and a fixed $\by\in\RR^3$, therefore \eqref{Born1} can be expanded as
\begin{align}
\mathbf{E}_0(\bx)-\mathbf{E}_0^0(\bx)
&= 
\ds\frac{e^{i\K|\bx|}}{|\bx|}\frac{\K^2}{4\pi}\int_{\RR^3} (\mathbf{I}_3-\hbx\hbx^\top)e^{-i\K\hbx^\top\by} \mathbf{E}^0_0(\by)\eta(\by)d\by +o(\sigma_\eta)+ O\left(\frac{1}{|\bx|^2}\right)
\nonumber
\\
&= 
\ds\frac{e^{i\K|\bx|}}{|\bx|}\frac{\K^2}{4\pi}\int_{\RR^3} \Big[\mathbf{I}_3+\frac{1}{\K^2}\nabla_\by\nabla_\by^\top\Big]e^{-i\K\hbx^\top\by} \mathbf{E}^0_0(\by)\eta(\by)d\by +o(\sigma_\eta)+ O\Big(\frac{1}{|\bx|^2}\Big).
\label{Born2}
\end{align}
Moreover, under Born approximation, 
\begin{align}
\label{Born3}
\mathbf{E}_\rho(\bx)- \mathbf{E}_0(\bx) 
=&
\Big(\mathbf{E}_\rho^0(\bx)- \mathbf{E}_0^0(\bx)\Big)-\frac{\K^2}{\epsilon_0}\int_{\RR^3} \mathbf{\Gamma}^0(\bx,\by)\eta(\by)\left(\mathbf{E}_\rho^0(\by)- \mathbf{E}_0^0(\by)\right)d\by +o(\sigma_\eta).
\end{align}
The first term on the RHS of \eqref{Born3} is exactly the reference scattered field which is linked to the reference far field scattering amplitude by relation \eqref{Einfty}. Further, Theorem \ref{Asymp} indicates that the second term on the RHS is of order $o(\sigma_\eta\rho^3)$. Therefore,  
\begin{align}
\label{Born4}
\mathbf{E}_\rho(\bx)- \mathbf{E}_0(\bx) 
=&
\frac{e^{i\K|\bx|}}{|\bx|}\mathbf{E}^{\infty,0}_\rho(\hbx;\theta,\theta^\perp)+o(\sigma_\eta)+o(\sigma_\eta\rho^3)+O\left(\frac{1}{|\bx|^2}\right).
\end{align}
Combining \eqref{Einfty2}, \eqref{Born2} and \eqref{Born4}  the inhomogeneous far field scattering amplitude can be written as  
\begin{align*}
\mathbf{E}^\infty_\rho(\hbx,\theta,\theta^\perp)
=& 
\mathbf{E}^{\infty,0}_\rho(\hbx,\theta,\theta^\perp)+\frac{\K^2}{4\pi}\int_{\RR^3} \Big[\mathbf{I}_3+\frac{1}{\K^2}\nabla_\by\nabla_\by^\top\Big]e^{-i\K\hbx^\top\by} \mathbf{E}^0_0(\by)\eta(\by)d\by
\nonumber
\\
&
+o(\sigma_\eta)+o(\sigma_\eta\rho^3)+ O\Big(\frac{1}{|\bx|^2}\Big),
\end{align*}
and consequently the Herglotz field due to $\mathbf{E}^\infty_\rho$ admits the expansion 
\begin{align*}
\mathcal{H}_E[\mathbf{E}^\infty_\rho(\cdot,\theta,\theta^\perp)](\bz)
=&
\mathcal{H}_E[\mathbf{E}^{\infty,0}_\rho(\cdot,\theta,\theta^\perp)](\bz)
\\
&+\frac{\K^2}{4\pi}\int_{\RR^3}\left(\Big[\mathbf{I}_3+\frac{1}{\K^2}\nabla_\by\nabla_\by^\top\Big]\int_{\mathbb{S}^2} e^{i\K\hbx^\top(\bz-\by)} ds(\hbx)\right)\mathbf{E}^0_0(\by)\eta(\by)d \by
\\
&
+o(\sigma_\eta)+o(\sigma_\eta\rho^3)+ O\Big(\frac{1}{|\bx|^2}\Big),
\end{align*}
for all $\bz\in\RR^3$. Finally, by identity \eqref{exp1},
\begin{align}
\mathcal{H}_E[\mathbf{E}^\infty_\rho(\cdot,\theta,\theta^\perp)](\bz)
=&
\mathcal{H}_E[\mathbf{E}^{\infty,0}_\rho(\cdot,\theta,\theta^\perp)](\bz)-\frac{\K}{\epsilon_0}\int_{\RR^3}\Im m\big\{\mathbf{\Gamma}(\bz,\by)\big\}\mathbf{E}^0_0(\by)\eta(\by)d \by
\nonumber
\\
&
+o(\sigma_\eta)+o(\sigma_\eta\rho^3)+ O\Big(\frac{1}{|\bx|^2}\Big).\label{H2}
\end{align}

The first term on the RHS of \eqref{H2} is the Herglotz field in the reference medium and is independent of random fluctuations $\eta$. It produces the principle peak of $\Itd^n$, that is, without medium noise. The second term corresponds to the error due to clutter and generates a speckle pattern corrupting the location indicator function. Therefore, the expression \eqref{H2} together with Theorem \ref{thmTD} substantiates that the functional $\Itd^n$ in the case of medium noise is the sum of the reference detection functional (studied in Section \ref{sensitivity}), and the clutter error up to leading order of approximation.

Before further discussion, let us introduce the clutter error operator $\mathcal{H}_E^\eta$ for all incident fields $\mathbf{E}$ and $\bx\in\RR^3$ by 
\begin{eqnarray}
\mathcal{H}_E^\eta[\mathbf{E}](\bx):=-\frac{\K}{\epsilon_0}\int_{\RR^3}\Im m\big\{\mathbf{\Gamma}(\bx,\by)\big\}\mathbf{E}(\by)\eta(\by)d \by.
\label{HClutter}
\end{eqnarray}
For ease of notation  the clutter error associated to the reference incident field $\mathbf{E}^{0,j,\ell}_0$ (and thereby associated to $\mathbf{E}^\infty_\rho(\cdot,\theta_j,\theta_j^{\perp,\ell})$) will be denoted by $\mathcal{H}_E^{\eta,j,\ell}$ instead of $\mathcal{H}_E^\eta[\mathbf{E}^{0,j,\ell}_0]$.
In the rest of this subsection   the speckle pattern generated by $\mathcal{H}_E^{\eta,j,\ell}$ will be analyzed for inclusions with only permittivity or permeability contrasts.  

\subsubsection{Speckle field analysis for permittivity contrast}\label{ss:speckle11}

In order to understand the speckle field generated by $\mathcal{H}_E^{\eta,j,\ell}$ for $\ell=1,2$ and $j=1,\cdots, n$  when $\mu_0=\mu_1=\mu_2$  let us calculate the covariance of the  perturbation in function $\Itd^n$ due to clutter noise. Towards this end, first the quantity $\frac{1}{n}\sum_{\ell=1}^2\sum_{j=1}^n 
\overline{\mathcal{H}_E^{\eta, j,\ell}
(\bz)} \cdot\mathbf{M}_{S}^\epsilon\mathbf{E}^{0,j,\ell}_0(\bz)$ is estimated for all $\bz\in\RR^3$.
Using \eqref{HClutter}, \eqref{Ej} and \eqref{theta2} respectively one can see that  
\begin{align*}
\frac{1}{n}\sum_{\ell=1}^2\sum_{j=1}^n 
&\overline{\mathcal{H}_E^{\eta, j,\ell}
(\bz)} \cdot\mathbf{M}_{S}^\epsilon\mathbf{E}^{0,j,\ell}_0(\bz)
\\
= &
-\frac{\K}{n\epsilon_0}\sum_{\ell=1}^2\sum_{j=1}^n\int_{\RR^3} \eta(\by)\Im m\left\{\mathbf{\Gamma}^0(\by,\bz)\right\}:\mathbf{M}_{S}^\epsilon\mathbf{E}^{0,j,\ell}_0 (\bz) \left[\overline{\mathbf{E}^{0,j,\ell}_0(\by)}\right]^\top d\by 
\\
= &
-\frac{\K^3}{\epsilon_0}\int_{\RR^3}\eta(\by)\Im m\left\{\mathbf{\Gamma}^0(\by,\bz)\right\}:\mathbf{M}_{S}^\epsilon
\Bigg[
\frac{1}{n}\sum_{\ell=1}^2\sum_{j=1}^n(\theta_j\times\theta_j^{\perp,\ell})\Big[\theta_j\times\theta_j^{\perp,\ell}\Big]^\top
e^{i\K\theta_j^\top(\bz-\by)}
\Bigg] d\by 
\\
\simeq &
\phantom{-}\frac{4\pi\K^2}{\epsilon_0^2}\int_{\RR^3} \eta(\by)\mathcal{Q}_\eta[\mathbf{M}_{S}^\epsilon](\by,\bz)d\by,
\end{align*}
where $\mathcal{Q}_\eta$ is a real valued function defined by
$$
\mathcal{Q}_\eta[\mathbf{A}](\bx,\by):=\Im m\big\{\mathbf{\Gamma}^0(\bx,\by)\big\}:\mathbf{A} \Im m\big\{\mathbf{\Gamma}^0(\bx,\by)\big\}, \quad\forall \mathbf{A}\in\RR^{3\times 3},\,\,\bx,\by\in\RR^3.
$$
Therefore, the covariance  of the speckle field  can be approximated by 
\begin{align*}
\nonumber
{\rm Cov}
&\big(\Itd^n(\bz),\Itd^n(\bz')\big)
\\
& =\frac{\K^4(a_2^\epsilon)^2}{16\pi^2n^2}\sum_{\ell,\ell'=1}^2\sum_{j,j'=1}^n
\mathbb{E}\Big[
\ds\Re e\left\{
\overline{\mathcal{H}_E^{\eta,j,\ell}(\bz)}
\cdot \mathbf{M}_S^\epsilon
\mathbf{E}_0^{0,j,\ell}(\bz)\right\}
\Re e\left\{
\overline{\mathcal{H}_E^{\eta, j',\ell'}(\bz')}
\cdot \mathbf{M}_S^\epsilon
\mathbf{E}_0^{0,j',\ell'}(\bz')\right\}
\Big]   
\nonumber
\\
&\simeq  \frac{\K^8(a_2^\epsilon)^2}{\epsilon_0^4} \iint_{\RR^3\times\RR^3} C_\eta(\bx,\by)\mathcal{Q}_\eta[\mathbf{M}_{S}^\epsilon](\bx,\bz)\mathcal{Q}_\eta[\mathbf{M}_{S}^\epsilon](\by,\bz') d\bx d\by, 
\end{align*}
for all $\bz,\bz'\in\RR^3$, where $C_\eta(\bx,\by)=\mathbb{E}\left[\eta(\bx)\eta(\by)\right]$  is  the two-point correlation function of the fluctuations in permittivity.  The kernel $\bz\mapsto \mathcal{Q}_\eta[\mathbf{M}_S^\epsilon](\bz,\bz')$ for a fixed $\bz'\in\RR^3$ is maximal when $\bz\to\bz'$ and the focal spot of its peak is of the order of half the operating wavelength. 

For simplicity assume that the inclusion $D$ is spherical and note that $\mathcal{Q}_\eta[\mathbf{I}_3](\bx,\by)= \|\Im m\big\{\mathbf{\Gamma}(\bx,\by)\big\}\|^2$. Then, by Lemma \ref{PT},  
\begin{align}
&\frac{1}{n}\sum_{\ell=1}^2\sum_{j=1}^n 
\overline{\mathcal{H}_E^{\eta, j,\ell}
(\bz)} \cdot\mathbf{M}_{S}^\epsilon\mathbf{E}^{0,j,\ell}_0(\bz)\simeq
\frac{12\pi\epsilon_2\K^2}{\epsilon_0^2(2\epsilon_0+\epsilon_2)}|B_S|\,\int_{\RR^3} \eta(\by)\mathcal{Q}_\eta[\mathbf{I}_3](\by,\bz)d\by,\label{Approx2}
\\
&{\rm Cov}\big(\Itd^n(\bz),\Itd^n(\bz')\big)
\simeq b_\epsilon^2 \K^8 \iint_{\RR^3\times\RR^3} C_\eta(\bx,\by)
\mathcal{Q}_\eta[\mathbf{I}_3](\bx,\bz)
\mathcal{Q}_\eta[\mathbf{I}_3](\by,\bz') d\bx d\by,
\label{ApproxCov}
\end{align}
where 
\begin{eqnarray*}
b_\epsilon= \frac{3(\epsilon_0-\epsilon_2)}{\epsilon_0^2(2\epsilon_0+\epsilon_2)}|B_S|.
\end{eqnarray*}
The expression \eqref{Approx2} elucidates that the speckle field in the image is essentially the medium noise smoothed by an integral kernel of the form $\|\Im m\{{\mathbf{\Gamma}}^0\}\|^2$. Similarly, \eqref{ApproxCov} shows that the correlation structure of the speckle field is essentially that of the medium noise smoothed by the same kernel. Since the typical width of $\Im m\{{\mathbf{\Gamma}}^0\}$ is about half the wavelength, the correlation length of the speckle field is roughly the maximum between the correlation length of the medium noise and the wavelength, that is, of the same order as the main peak centered at location $\bz_S\approx \bz_D$. Thus, there is no way to distinguish the main peak from the hot spots of the speckle field based on their shapes. Only the height of the main peak can allow it to be visible out of the speckle field.  Unlike measurement noise case discussed in the previous section, the factor $\sqrt{n}$ has disappeared. Consequently, no further stability enhancement is possible. Therefore, $\Itd^n$ is moderately stable with respect to medium noise. Moreover, the main peak of $\Itd^n$ is affected by the clutter, unlike in the measurement noise case. Thus, $\Itd^n$ is more efficient with respect to measurement noise than medium noise.

\subsubsection{Speckle field analysis for permeability contrast}\label{ss:speckle12}

Let us now consider the case of an inclusion with permeability contrast only when $\epsilon_0=\epsilon_1=\epsilon_2$. For the purpose of evaluating   covariance of the speckle field in this case  we proceed in the similar fashion as before. Note that 
\begin{align*}
\frac{1}{n}\sum_{\ell=1}^2\sum_{j=1}^n 
&\overline{\nabla\times \mathcal{H}_E^{\eta, j,\ell}
(\bz)} \cdot\mathbf{M}_{S}^\mu\nabla\times\mathbf{E}^{0,j,\ell}_0(\bz)
\\
= &
-\frac{\K}{n\epsilon_0}\sum_{\ell=1}^2\sum_{j=1}^n\int_{\RR^3} \eta(\by)\nabla_\bz\times\Im m\left\{\mathbf{\Gamma}^0(\by,\bz)\right\}\overline{\mathbf{E}^{0,j,\ell}_0(\by)}d\by\cdot
\mathbf{M}_{S}^\mu\nabla\times\mathbf{E}^{0,j,\ell}_0(\bz) 
\\
= &
-\frac{\K}{n\epsilon_0}\sum_{\ell=1}^2\sum_{j=1}^n\int_{\RR^3} \eta(\by)\nabla_\bz\times\Im m\left\{\mathbf{\Gamma}^0(\by,\bz)\right\}:\mathbf{M}_{S}^\mu\nabla_\bz\times\left(\mathbf{E}^{0,j,\ell}_0 (\bz) \left[\overline{\mathbf{E}^{0,j,\ell}_0(\by)}\right]^\top\right) d\by,
\end{align*}
for all $\bz\in\RR^3$. The above quantity can be approximated by using \eqref{Ej} and \eqref{theta2} as
\begin{align*}
\frac{1}{n}\sum_{\ell=1}^2\sum_{j=1}^n 
\overline{\nabla\times \mathcal{H}_E^{\eta, j,\ell}
(\bz)} \cdot\mathbf{M}_{S}^\mu\nabla\times\mathbf{E}^{0,j,\ell}_0(\bz)
=&-\frac{\K^3}{\epsilon_0}\int_{\RR^3}\eta(\by)\nabla_\bz\times\Im m\left\{\mathbf{\Gamma}^0(\by,\bz)\right\}:\mathbf{M}_{S}^\mu
\nabla_\bz\times
\\
& 
\Bigg[
\frac{1}{n}\sum_{\ell=1}^2\sum_{j=1}^n(\theta_j\times\theta_j^{\perp,\ell})\Big[\theta_j\times\theta_j^{\perp,\ell}\Big]^\top
e^{i\K\theta_j^\top(\bz-\by)}
\Bigg] d\by 
\\
\simeq &
\phantom{-}\frac{4\pi\K^2}{\epsilon_0^2}\int_{\RR^3} \eta(\by)\widetilde{\mathcal{Q}}_\eta[\mathbf{M}_{S}^\mu](\by,\bz)d\by,
\end{align*}
where the real valued function $\widetilde{\mathcal{Q}}_\eta$ is defined by
$$
\widetilde{\mathcal{Q}}_\eta[\mathbf{A}](\bx,\by):=
\nabla_\by\times\Im m\Big\{\widehat{\mathbf{\Gamma}}^0(\bx,\by)\Big\}:\mathbf{A} \nabla_\by\times\Im m\Big\{\widehat{\mathbf{\Gamma}}^0(\bx,\by)\Big\}, \quad\mathbf{A}\in\RR^{3\times 3},\,\bx,\by\in\RR^3.
$$
Therefore, the speckle field covariance in permeable contrast case can be approximated by 
\begin{align}
{\rm Cov} \big(\Itd^n(\bz),\Itd^n(\bz')\big)
\nonumber
=&\frac{(a_2^\mu)^2}{16\pi^2n^2}\sum_{\ell,\ell'=1}^2\sum_{j,j'=1}^n
\mathbb{E}\Big[
\ds\Re e\left\{
\overline{\nabla\times\mathcal{H}_E^{\eta,j,\ell}(\bz)}
\cdot \mathbf{M}_S^\mu
\nabla\times\mathbf{E}_0^{0,j,\ell}(\bz)\right\}
\nonumber
\\
&\qquad\qquad\qquad\qquad
\Re e\left\{
\overline{\nabla\times\mathcal{H}_E^{\eta, j',\ell'}(\bz')}
\cdot \mathbf{M}_S^\mu
\nabla\times\mathbf{E}_0^{0,j',\ell'}(\bz')\right\}
\Big] 
\nonumber
\\
\simeq &
\frac{\K^4(a_2^\mu)^2}{\epsilon_0^4} \iint_{\RR^3\times\RR^3} C_\eta(\bx,\by)\widetilde{\mathcal{Q}}_\eta[\mathbf{M}_{S}^\mu](\bx,\bz)\widetilde{\mathcal{Q}}_\eta[\mathbf{M}_{S}^\mu](\by,\bz') d\bx d\by,
\label{Cov6}
\end{align}
for all $\bz,\bz'\in\RR^3$. As in the previous case, the function $\bz\mapsto\widetilde{\mathcal{Q}}_\eta[\mathbf{M}_S^\mu](\bz,\bz')$ for a fixed $\bz'\in\RR^3$ is maximal when $\bz\to\bz'$ and the focal spot of its peak is of the order of half the operating wavelength.

Since  $\widetilde{\mathcal{Q}}_\eta[\mathbf{I}_3](\bx,\by)= \|\nabla\times\Im m\big\{\mathbf{\Gamma}(\bx,\by)\big\}\|^2$  it is evident that for a spherical inclusion
\begin{align*}
&\frac{1}{n}\sum_{\ell=1}^2\sum_{j=1}^n 
\overline{\nabla\times\mathcal{H}_E^{\eta, j,\ell}
(\bz)}\cdot\mathbf{M}_{S}^\mu\nabla\times\mathbf{E}^{0,j,\ell}_0(\bz)
\simeq
\frac{12\pi\mu_2\K^2|B_S|}{\epsilon_0^2(2\mu_0+\mu_2)}\,\int_{\RR^3} \eta(\by)\widetilde{\mathcal{Q}}_\eta[\mathbf{I}_3](\by,\bz)d\by,
\\
&{\rm Cov}\big(\Itd^n(\bz),\Itd^n(\bz')\big)
\simeq b_\mu^2 \K^4 \iint_{\RR^3\times\RR^3} C_\eta(\bx,\by)
\widetilde{\mathcal{Q}}_\eta[\mathbf{I}_3](\bx,\bz)
\widetilde{\mathcal{Q}}_\eta[\mathbf{I}_3](\by,\bz') d\bx d\by,
\end{align*}
where 
\begin{eqnarray*}
b_\mu= \frac{3(\mu_0-\mu_2)}{\epsilon_0^2(2\mu_0+\mu_2)}|B_S|.
\end{eqnarray*}
The conclusions drawn in Section \ref{ss:speckle11} still hold in this case, that is, $\Itd^n$ is moderately stable.

\subsection{Random fluctuations in permittivity}\label{s:medNoise2}

The aim in this section is to investigate the stability of the detection function $\Itd^n$ with respect to medium noise when the permittivity of the material loaded in $\RR^3$ is fluctuating.  Assume that the fluctuating background permittivity is given by 
\begin{eqnarray*}
{\epsilon^{-1}(\bx)}:= {\epsilon_0^{-1}}[1+\varphi(\bx)],
\end{eqnarray*} 
where the random fluctuation $\varphi:\RR^3\to\RR$ is small so that the Born approximation is suitable.  As a result,  
\begin{align*}
\mathbf{E}_0(\bx)-\mathbf{E}_0^0(\bx)=&-\frac{1}{\epsilon_0}\int_{\RR^3} \mathbf{\Gamma}^0(\bx,\by)\nabla\times\left(\varphi(\by)\nabla\times\mathbf{E}^0_0(\by)\right)d\by +o(\sigma_\varphi),
\\
\mathbf{E}_\rho(\bx)- \mathbf{E}_0(\bx) 
=&
\Big[\mathbf{E}_\rho^0(\bx)- \mathbf{E}_0^0(\bx)\Big]
-\frac{1}{\epsilon_0}\int_{\RR^3} \mathbf{\Gamma}^0(\bx,\by)\nabla\times\left(\varphi(\by)\nabla\times\Big[\mathbf{E}_\rho^0(\by)- \mathbf{E}_0^0(\by)\Big]\right)d\by 
\nonumber
\\
&+o(\sigma_\eta),
\end{align*} 
where $\sigma_\varphi$ is the amplitude of the fluctuation $\varphi$. Expressing $\mathbf{E}_\rho(\bx)- \mathbf{E}_0^0(\bx)$ as the sum of $\mathbf{E}_\rho(\bx)- \mathbf{E}_0(\bx)$ and $\mathbf{E}_0(\bx)- \mathbf{E}_0^0(\bx)$  and defining the inhomogeneous far field scattering amplitude $\mathbf{E}^\infty_\rho$ as in \eqref{Einfty2}  one gets   
\begin{align}
\mathbf{E}^\infty_\rho(\hbx,\theta,\theta^\perp)
=& 
\mathbf{E}^{\infty,0}_\rho(\hbx,\theta,\theta^\perp)+\frac{1}{4\pi}\int_{\RR^3} \Big[\mathbf{I}_3+\frac{1}{\K^2}\nabla_\by\nabla_\by^\top\Big]e^{-i\K\hbx^\top\by} \nabla\times\left(\varphi(\by)\nabla\times\mathbf{E}^0_0(\by)\right)d\by
\nonumber
\\
&
+o(\sigma_\eta)+o(\sigma_\eta\rho^3)+ O\Big(\frac{1}{|\bx|^2}\Big).
\label{Einfty4}
\end{align}

Now following the analysis in Section \ref{s:medNoise1}, it is easy to note that the Herglotz field, generated by the inhomogeneous far field scattering amplitude in \eqref{Einfty4}, consists of two terms: first one leading to the detection of true location of $D$ whereas the second one generates a speckle pattern due to clutter noise. The clutter error operator $\mathcal{H}_E^\varphi$ is now defined for all incident fields $\mathbf{E}$ and $\bx\in\RR^3$ by
\begin{eqnarray}
\mathcal{H}_E^\varphi[\mathbf{E}](\bx):=-\frac{1}{\K\epsilon_0}\int_{\RR^3}\Im m\big\{\mathbf{\Gamma}(\bx,\by)\big\}\nabla\times\left(\varphi(\by)\nabla\times\mathbf{E}(\by)\right)d \by.
\label{HClutter2}
\end{eqnarray}

\subsubsection{Speckle field analysis for permittivity contrast}

For permittivity contrast case the speckle field generated by $\Itd^n$  at $\bz\in\RR^3$ is given by 
\begin{align*}
\nonumber
\Lambda_\epsilon(\bz):=&
\frac{1}{n} \sum_{\ell=1}^2\sum_{j=1}^n 
\Re e\Big\{
\overline{\mathcal{H}_E^{\varphi,j,\ell}(\bz)}\cdot \mathbf{M}_S^\epsilon\mathbf{E}_0^{0,j,\ell}(\bz)
\Big\}
\\
\simeq &
-\frac{1}{n\K\epsilon_0}\sum_{j=1}^n 
\Re e\left\{
\int_{\RR^3} \Im m\left\{\mathbf{\Gamma}^0(\by,\bz)\right\}\left(\nabla\times\varphi(\by)\nabla
\times\overline{\mathbf{E}^{0,j,\ell}_0(\by)}\right)\cdot  
\mathbf{M}_S^\epsilon \mathbf{E}_0^{0,j,\ell}(\bz)d\by
\right\}.
\end{align*}
Since $\Im m\big\{\mathbf{\Gamma}^0(\by,\bz)\big\}$ is symmetric  therefore
\begin{align*}
\Lambda_\epsilon(\bz)\simeq &
-\frac{1}{n\K\epsilon_0}\sum_{\ell=1}^2\sum_{j=1}^n 
\Re e\left\{
\int_{\RR^3}\Im m\Big\{\mathbf{\Gamma}^0(\by,\bz)\Big\} \mathbf{M}_S^\epsilon\mathbf{E}_0^{0,j,\ell}(\bz)\cdot\left(\nabla\times\varphi(\by)\nabla
\times\overline{\mathbf{E}^{0,j,\ell}_0(\by)}\right)d\by
\right\}.
\end{align*}
Further assume that the fluctuation $\varphi$ is compactly supported in $\RR^3$, which is a valid assumption. Then the above expression simplifies to 
\begin{align*}
\Lambda_\epsilon(\bz) \simeq &
-\frac{1}{n\K\epsilon_0}\sum_{\ell=1}^2\sum_{j=1}^n 
\Re e\left\{
\int_{\RR^3} \nabla_\by\times\Im m\Big\{\mathbf{\Gamma}^0(\by,\bz)\Big\} \mathbf{M}_S^\epsilon \mathbf{E}_0^{0,j,\ell}(\bz)\cdot\varphi(\by)\nabla
\times\overline{\mathbf{E}^{0,j,\ell}_0(\by)}d\by
\right\}.
\end{align*}
Straight forward calculations and the approximation \eqref{theta2} show that 
\begin{align*}
\Lambda_\epsilon(\bz)
\simeq
 \frac{4\pi}{\epsilon_0^2}\Re e\left\{ \int_{\RR^3}\varphi(\by)\mathcal{Q}_\varphi[\mathbf{M}_S^\epsilon](\by,\bz)d\by 
\right\},
\end{align*}
where the real valued operator $\mathcal{Q}_\varphi$  is defined by  
$$
\mathcal{Q}_\varphi[\mathbf{A}](\bx,\by):=\nabla_\bx\times\Im m\big\{\mathbf{\Gamma}^0(\bx,\by)\big\}\mathbf{A}:\nabla_\bx\times\Im m\big\{\mathbf{\Gamma}^0(\bx,\by)\big\},
\quad\forall \mathbf{A}\in\RR^{3\times 3},\,\bx,\by\in\RR^3.
$$
Consequently, the covariance of the speckle field turns out to be
\begin{align}
\nonumber
{\rm Cov}\Big(\Itd^n(\bz), \Itd^n(\bz')\Big)
=&\frac{\K^4(a_2^\epsilon)^2}{16\pi^2}{\rm Cov}\big(\Lambda_\epsilon(\bz),\Lambda_\epsilon(\bz')\big)
\\
\simeq &
\frac{\K^4(a_2^\epsilon)^2}{\epsilon_0^4}\iint_{\RR^3\times\RR^3} C_\varphi(\bx,\by)\mathcal{Q}_\varphi[\mathbf{M}_S^\epsilon](\bx,\bz)\mathcal{Q}_\varphi[\mathbf{M}_S^\epsilon](\by,\bz')d\bx d\by, \label{Cov7}
\end{align}
where $C_\varphi(\bx,\by):=\mathbb{E}[\varphi(\bx)\varphi(\by)]$ is the two point correlation of fluctuation $\varphi$. The expression \eqref{Cov7} is very similar to \eqref{Cov6} studied in Section \ref{ss:speckle12}. As already pointed out in Section \ref{ss:speckle12}, the speckle field is indeed the medium noise smoothed with an integral kernel whose width is of the order of wavelength. The speckle pattern and its covariance for the case of a permittivity contrast can be obtained using arguments similar to those in the previous section.

\subsubsection{Speckle field analysis for permeability contrast}

For a permeability contrast the speckle field generated by $\Itd^n$  at $\bz\in\RR^3$ is given by 
\begin{align*}
& \Lambda_\mu(\bz):=\frac{1}{n}\sum_{\ell=1}^2\sum_{j=1}^n 
\overline{\nabla\times\mathcal{H}_E^{\varphi,j,\ell}(\bz)}\cdot  \mathbf{M}_S^\mu\nabla\times\mathbf{E}_0^{0,j,\ell}(\bz) 
\\
&= 
\frac{-1}{n\K\epsilon_0}\sum_{\ell=1}^2\sum_{j=1}^n 
\Re e\left\{
\int_{\RR^3} \nabla_\bz\times\Im m\Big\{\mathbf{\Gamma}^0(\by,\bz)\Big\}\nabla\times\varphi(\by)\nabla
\times\overline{\mathbf{E}^{0,j,\ell}_0(\by)}\cdot  \mathbf{M}_S^\mu\nabla\times \mathbf{E}_0^{0,j,\ell}(\bz)d\by
\right\}.
\end{align*}
Since $\varphi$ is assumed to have compact support, the above expression simplifies to
\begin{align*}
\Lambda_\mu(\bz) \simeq &
-\frac{1}{n\K\epsilon_0}\sum_{\ell=1}^2\sum_{j=1}^n 
\Re e\Big\{
\int_{\RR^3} \nabla_\by\times \nabla_\by\times\Im m\Big\{\mathbf{\Gamma}^0(\by,\bz)\Big\}\mathbf{M}_S^\mu \nabla\times \mathbf{E}_0^{0,j,\ell}(\bz)
\\
& \qquad\qquad\qquad\qquad\qquad\qquad\qquad\qquad\qquad\qquad\qquad\qquad\qquad
\cdot\varphi(\by)\nabla
\times\overline{\mathbf{E}^{0,j,\ell}_0(\by)} d\by \Big\} 
\\
= &
-\frac{\K}{n\epsilon_0}\sum_{\ell=1}^2\sum_{j=1}^n 
\Re e\left\{
\int_{\RR^3} \Im m\Big\{\mathbf{\Gamma}^0(\by,\bz)\Big\}\mathbf{M}_S^\mu\cdot \varphi(\by)\nabla \times\overline{\mathbf{E}^{0,j,\ell}_0(\by)}\left(\nabla\times \mathbf{E}_0^{0,j,\ell}(\bz)\right)^\top d\by
\right\}.
\end{align*} 
Finally, by invoking approximation \eqref{theta1} one arrives at
\begin{align*}
\Lambda_\mu(\bz)\simeq & \frac{4\pi\K^4}{\epsilon_0^2}
\int_{\RR^3}\varphi(\by)\widetilde{\mathcal{Q}}_\varphi
[\mathbf{M}^\mu_S](\by,\bz)d\by,
\end{align*} 
where $\widetilde{\mathcal{Q}}_\varphi$ is a real valued function defined by
$$
\widetilde{\mathcal{Q}}_\varphi[\mathbf{A}](\bx,\by)
=
\Im m\Big\{\mathbf{\Gamma}^0(\bx,\by)\Big\}\mathbf{A}: \Im m\Big\{\mathbf{\Gamma}^0(\bx,\by)\Big\},\quad \mathbf{A}\in\RR^{3\times 3},\,\bx,\by\in\RR^3.
$$
Consequently, 
\begin{align}
\nonumber
{\rm Cov}\Big(\Itd^n(\bz), \Itd^n(\bz')\Big)
=&\frac{\K^4(a_2^\mu)^2}{16\pi^2}{\rm Cov}\big(\Lambda_\mu(\bz),\Lambda_\mu(\bz')\big) 
\\
\simeq &
\frac{\K^8(a_2^\mu)^2}{\epsilon_0^4}\iint_{\RR^3\times\RR^3} C_\varphi(\bx,\by)\widetilde{\mathcal{Q}}_\varphi[\mathbf{M}_S^\mu](\bx,\bz)\widetilde{\mathcal{Q}}_\varphi[\mathbf{M}_S^\mu](\by,\bz').\label{Cov8}
\end{align}
Once again  it can be concluded from \eqref{Cov8} that the functional $\Itd^n$ is moderately stable with respect to medium noise. 

\section{Conclusions}\label{conc}

The topological sensitivity functions are presented for detecting an electromagnetic inclusion of vanishing characteristic size using single or multiple measurements of electric far field scattering amplitude. The proposed functions are very efficient in terms of resolution, stability and signal-to-noise ratio. The resolution limit of the order of half the operating wavelength is achieved. It is substantiated that the topological derivative based location indicator functions are very stable with respect to measurement noise, especially when multiple measurements are available at hand. Increasing the number $n$ of the measurements enhances the stability of the indicator functions with a rate of $O(\sqrt{n})$. It is also observed that the indicator functions are more efficient with respect to measurement noise than medium noise. Nevertheless, a moderate stability with respect to medium noise is guaranteed under Born approximation.  

\section*{Acknowledgment} 

The authors would like to thank Dr. Junqing Chen from Tsinghua University, Beijing for his invaluable suggestions and technical support.

\bigskip

\bibliographystyle{plain}

\end{document}